\newcommand{\prk}{\textnormal{prk}\,}
\newcommand{\morA}{\textnormal{mor}\,A}
\newcommand{\radA}{\textnormal{rad}_A}
\newcommand{\Hom}{\textnormal{Hom}_A}
\newcommand{\modA}{\textnormal{mod}\,A}
\renewcommand{\mod}{\textnormal{mod}\,}
\newcommand{\ModA}{\textnormal{Mod}\,A}
\newcommand{\im}{\textnormal{Im}\,}
\newcommand{\add}{\textnormal{add}\,}
\newcommand{\Ab}{\textnormal{Ab}}
\newcommand{\KG}{\textnormal{KG}}
\newcommand{\TD}{\textnormal{TD}}
\newcommand{\ob}{\textnormal{ob}\,}
\newcommand{\op}{\textnormal{op}}
\newcommand{\gen}{\textnormal{gen}\,}
\newcommand{\cogen}{\textnormal{cogen}\,}
\newcommand{\fun}[1]{(\mod #1, \Ab)}
\newcommand{\funfp}[1]{(\mod #1, \Ab)^\textnormal{fp}}
\begin{document}

\numberwithin{equation}{section} \theoremstyle{plain}
\newtheorem*{thm*}{Main Theorem}
\newtheorem{thm}{Theorem}
\numberwithin{thm}{section}
\newtheorem{coro}[thm]{Corollary}
\newtheorem*{coro*}{Corollary}
\newtheorem{conj}[thm]{Conjecture}
\newtheorem*{conj*}{Conjecture}
\newtheorem{lem}[thm]{Lemma}
\newtheorem*{lem*}{Lemma}
\newtheorem{prop}[thm]{Proposition}
\newtheorem*{prop*}{Proposition}
\newtheorem{rem}[thm]{Remark}
\newtheorem*{rem*}{Remark}
\newtheorem{exa}[thm]{Example}
\newtheorem*{exa*}{Example}
\newtheorem{df}[thm]{Definition}
\newtheorem*{df*}{Definition}
\newtheorem{ques}{Question}
\newtheorem*{ques*}{Question}
\newtheorem{construction}{Construction}
\newtheorem*{construction*}{Construction}
\newtheorem*{ack*}{ACKNOWLEDGEMENTS}
\newtheorem{case}{Case}

\title{Ideal Torsion Pairs for Artin Algebras} 
\author{Kevin Schlegel}
\date{}

\address{Kevin Schlegel, University of Stuttgart, Institute for Algebra and Number Theory, Pfaffenwaldring 57, 70569 Stuttgart, Germany}
\email{kevin.schlegel@iaz.uni-stuttgart.de}

\subjclass[2020]{16G10, 16G60, 16S90}
\keywords{Ideal approximation theory, torsion pairs, Krull-Gabriel dimension, transfinite radical}

\begin{abstract} For the module category of an Artin algebra, we generalize the notion of torsion pairs to ideal torsion pairs. Instead of full subcategories of modules, ideals of morphisms of the ambient category are considered. We characterize the functorially finite ideal torsion pairs, which are those fulfilling some nice approximation conditions, first through corresponding functors and then through the notion of ideals determined by objects introduced in this work. As an application of this theory, we generalize preprojective modules, introduce a new homological dimension, the torsion dimension, and establish its connection with the Krull-Gabriel dimension. In particular, it is shown that both dimensions coincide for hereditary Artin algebras.     
\end{abstract}

\maketitle

\section*{Introduction}
The concept of torsion pairs is a useful tool to study the representation theory of finite dimensional algebras (or more generel Artin algebras). Let $A$ be an Artin algebra and $\mod A$ the category of finitely generated (left) $A$-modules. Recall that a torsion pair $(\mathcal{T}, \mathcal{F})$ in $\mod A$ is a pair of full subcategories of $\modA$ such that $\Hom(\mathcal{T}, \mathcal{F}) = 0$ and every $M\in \modA$ admits a short exact sequence
\begin{align*}
    0 \longrightarrow L \longrightarrow M \longrightarrow N \longrightarrow 0
\end{align*}
with $L \in \mathcal{T}$ and $N \in \mathcal{F}$. Of particular interest are the functorially finite torsion pairs, which are those fulfilling some nice approximation conditions. They arise in the context of tilting theory \cite{Brenner} and $\tau$-tilting theory \cite{Iyama}. The aim of this work is threefold: 
\begin{itemize}
    \item We generalize the notion of torsion pairs. Instead of full subcategories of $\modA$, ideals of morphisms in $\modA$ will be considered. This leads to the concept of ideal torsion pairs (Section 2).
    \item We focus on functorially finite ideal torsion pairs, which are those fulfilling some nice approximation conditions similar to functorially finite torsion pairs. We classify them through corresponding subfunctors of the forgetful functor $\modA \rightarrow \Ab$ (Section 2) and the notion of ideals determined by objects introduced in this work (Section 3), following Auslander's work on morphisms determined by objects \cite{Auslander1}.
    \item We apply the theory developed, generalizing the notion of preprojective modules (Section 5) and establishing a connection between ideal torsion pairs and the Krull-Gabriel dimension (Section 6).\\ 
\end{itemize}

A pair $(\mathcal{I}, \mathcal{J})$ of ideals of morphisms in $\modA$ is an \emph{ideal torsion pair} if $\psi \varphi = 0$ for all $\varphi\in \mathcal{I}$, $\psi \in \mathcal{J}$ and every $M\in \modA$ admits a short exact sequence
\begin{align*}
    0 \longrightarrow L \xlongrightarrow{\varphi} M \xlongrightarrow{\psi} N \longrightarrow 0 \tag{$\ast$}
\end{align*}
with $\varphi \in \mathcal{I}, \psi \in \mathcal{J}$. Further, $\mathcal{I}$ is called a \emph{torsion ideal} and $\mathcal{J}$ a \emph{torsion-free ideal}. The notion of ideal torsion pairs was parallely developed by the first three authors in \cite{Zhu}. To see that ideal torsion pairs generalize torsion pairs, let $(\mathcal{T}, \mathcal{F})$ be a torsion pair in $\modA$. Then $(\langle \mathcal{T} \rangle, \langle \mathcal{F} \rangle)$ is an ideal torsion pair, where $\langle \mathcal{T} \rangle$ and $\langle \mathcal{F} \rangle$ denote the collection of all morphisms factoring through a module in $\mathcal{T}$ and $\mathcal{F}$ respectively (Remark \ref{torsion}). This assignment from torsion pairs to ideal torsion pairs is injective.

The existence of the short exact sequences $(\ast)$ for ideal torsion pairs is functorial in $M$, which leads to a one to one correspondece between ideal torsion pairs $(\mathcal{I}, \mathcal{J})$ and subfunctors $t$ of the forgetful functor $\modA \rightarrow \Ab$ such that $tM$ is a submodule of $M$ for all $M\in \modA$ (Proposition \ref{subfun}). This correspondence is a crucial tool for the analysis of ideal torsion pairs, as we can now consider finitely presented functors to study them. The following theorem is a generalization of a result on torsion pairs \cite{Smalo}.
\\
\\
\textbf{Theorem A} (Theorem \ref{ffiff}). \textit{Let $(\mathcal{I}, \mathcal{J})$ be an ideal torsion pair and $t$ the corresponding functor. Then $\mathcal{I}$ is functorially finite if and only if $t$ is finitely presented if and only if $\mathcal{J}$ is functorially finite.}
\\

The definition of functorially finite ideals is given in Section 1. An ideal torsion pair is \emph{functorially finite} if it satisfies the equivalent properties in Theorem A. Inspired by Auslander's notion of morphisms determined by objects \cite{Auslander1}, an ideal $\mathcal{I}$ is \emph{right $C$-determined} for $C\in \modA$ if $\varphi f \in \mathcal{I}$ for all $f$ starting in $C$ (such that the composition is defined) already implies $\varphi \in \mathcal{I}$. Dually $\mathcal{I}$ is \emph{left $C$-determined} if $f \varphi \in \mathcal{I}$ for all $f$ ending in $C$ already implies $\varphi \in \mathcal{I}$. As it turns out, the right \mbox{$A$-determined} ideals of $\modA$ are precisely the torsion ideals (Proposition \ref{adet}). One might ask, when a torsion ideal is also left $C$-determined for some $C\in \modA$. The following result answers this question and gives a second viewpoint on functorially finite ideal torsion pairs.
\\
\\
\textbf{Theorem B} (Theorem \ref{det}).
\begin{itemize}
    \item[\rm (a)]\textit{In $\modA$ we have an equality 
    \begin{align*}
        \left\{\begin{matrix}\text{functorially finite}\\ \text{torsion ideals}\end{matrix}\right\} = \bigcup_{C \in \modA } \left\{\begin{matrix}\text{left $C$-determined} \\ \text{torsion ideals}\end{matrix}\right\}.
    \end{align*}}
    \item[\rm (b)] \textit{For $C\in \modA$ there exists a one to one correspondence
    \begin{align*}
        \left\{\begin{matrix}\text{left $C$-determined} \\ \text{torsion ideals}\end{matrix}\right\} \longleftrightarrow \left\{\begin{matrix}
 \text{bi-submodules of} \\ 
 \text{ ${}_AC_{\textnormal{End}_A(C)^{\textnormal{op}}}$ }
\end{matrix} \right\}.
    \end{align*}}\\
\end{itemize}

The main application of ideal torsion pairs in this work is their relation with the Krull-Gabriel dimension of $A$ (see \cite{Geigle}), denoted by $\KG(A)$. In a sense, $\KG(A)$ measures a complexity related to the representation type of $\modA$: We have $\KG (A) = 0$ if and only if $A$ is of finite representation type by a classical result of Auslander \cite{Auslander}, and $\KG (A) \neq 1$ by a result of Herzog \cite{Herzog} and Krause \cite[Corollary 11.4]{Krause2}. Geigle proved for hereditary $A$ that $\KG(A) = 2$ if $A$ is tame and $\KG(A) = \infty$ if $A$ is wild \cite{Geigle}. If $A$ is a finite dimensional $k$-algebra over a field $k$, then $A$ is conjectured to be tame domestic if and only if $\KG(A) < \infty$ \cite[Conjecture 3]{Schroer}. For example, this conjecture is proven for string algebras over an algebraically closed field \cite[Corollary 1.2]{Laking}.

We introduce a new dimension, the \emph{torsion dimension} of $A$, denoted by $\TD(A)$. It is defined to be the m-dimension (see \cite{Prest88}) of the lattice of functorially finite ideal torsion pairs $(\mathcal{I}, \mathcal{J})$, where $(\mathcal{I}, \mathcal{J}) \leq (\mathcal{I}', \mathcal{J}')$ if $\mathcal{I} \subseteq \mathcal{I}'$. We always have $\TD(A) \leq \KG(A)$ and equality if $A$ is commutative (Proposition \ref{small} and Remark \ref{commutative}). A central link between the torsion dimension and the Krull-Gabriel dimension of $A$ is given by the radical ideal $\radA$, which is the smallest ideal containing all non-isomorphisms between indecomposable modules in $\modA$, and its powers $\radA^\alpha$ for ordinal numbers $\alpha$ (see \cite{Prest}). The following conjecture is an "ordinal version" of Schr\"oers conjecture \cite[Conjecture 5]{Schroer}.
\\
\\
\textbf{Conjecture C} (Conjecture \ref{schr}). \textit{Let $\alpha$ be a non-zero ordinal number. Then $\KG(A) = \alpha+1$ if and only if $\radA^{\omega \alpha} \neq 0$ and $\radA^{\omega ({\alpha+1})} = 0$, where $\omega$ denotes the first non-finite ordinal.}\\

One step towards this conjecture is a result by Krause \cite[Corollary 8.14]{Krause}: If $\radA^{\omega \alpha} \neq 0$, then $\KG(A) \geq \alpha$. We show a similar result for the torsion dimension: If $\radA^{\omega \alpha} \neq 0$, then $\TD(A) \geq \alpha$ (Proposition \ref{okay}). Under the assumption of Conjecture C it then follows that $\KG(A) = \TD(A)$ or $\KG(A) = \TD(A)+1$ (Corollary \ref{kgtd}). There are no known examples, where the second equality holds. Conjecture C is proven if $A$ is a string algebra over an algebraically closed field \cite[Corollary 1.3]{Laking}. Lastly, if there exists $M\in \modA$ such that the smallest torsion class containing $M$ is not functorially finite, then $\TD(A) > 1$ (Proposition \ref{notone}). Putting everything together, we can calculate the values $\TD(A)$ for all hereditary Artin algebras and verify $\TD(A) = \KG(A)$ in this case.
\\
\\
\textbf{Theorem D} (Theorem \ref{here}). \textit{Let $A$ be a hereditary Artin algebra.} 
\begin{itemize}
    \item[(a)] \textit{If $A$ is representation finite, then $\TD(A) = 0$.}
    \item[(b)] \textit{If $A$ is tame, then $\TD(A) = 2$.}
    \item[(c)] \textit{If $A$ is wild, then $\TD(A) = \infty$.}\\
\end{itemize}

A second application of ideal torsion pairs is a generalization of preprojective modules \cite{AuslanderSmalo}. Again, the radical ideal and its ordinal powers appear in this context. For an ordinal number $\alpha$ let $\rm{I}(\radA^\alpha)$ be the smallest torsion ideal containing $\radA^\alpha$. The \emph{projective rank} of $M\in \modA$ is the smallest $\alpha$ such that the identity $1_M$ is contained in $\rm{I}(\radA^\alpha)$ (or $\infty$ if no such $\alpha$ exists). The modules of projective rank $0$ are precisely the projective modules and those of finite projective rank are precisely the preprojective modules (Proposition \ref{preproj}). If $\lambda\neq 0$ is a limit ordinal (or $\infty$), then there exists a module of projective rank greater than or equal to $\lambda$ if and only if $\radA^\lambda \neq 0$ (Corollary \ref{exi}).  As it turns out, the behaviour of modules of non-finite projective rank is opposite to the finite case: 
\\
\\
\textbf{Corollary E} (Corollary \ref{unbounded}). \emph{Let $\lambda$ be a non-zero limit ordinal and $n\in \mathbbm{N}$. If there exists a module of projective rank between $\lambda$ and $\lambda+n$, then there exist infinitely many indecomposable modules of projective rank between $\lambda$ and $\lambda+n$ and their length is unbounded.}\\

\textbf{Acknowledgements.} I would like to express my gratitude to Frederik Marks for weekly discussions of the mathematical content in this work. I am also thankful to him for guidance while writing this article.





\section{Preliminaries}

Let $A$ be an Artin $k$-algebra ($k$ is a commutative artinian ring and $A$ is finitely generated over $k$), $\modA$ the category of finitely generated (left) $A$-modules and $\Ab$ the category of abelian groups. Further, let $\mod k$ be the category of finitely generated $k$-modules and $D = \text{Hom}_k (-,I)$, where $I$ is the injective hull of $k/ J_k$ with $J_k$ the Jacobson radical of $k$. Then $D$ induces a duality between $\mod k$ and $\mod k$ as well as between $\mod A$ and $\mod A^\textnormal{op}$.

The main objects of concern in this work are the (functorially finite) ideal torsion pairs, a generalization of (functorially finite) torsion pairs. In what follows, we present the classical setup.

\subsection*{Torsion pairs}

A pair $(\mathcal{T}, \mathcal{F})$ of full subcategories of $\modA$ is a \emph{torsion pair} if
\begin{itemize}
    \item[(i)] $\Hom(M,N) = 0$ for all $M\in \mathcal{T}$ and $N\in \mathcal{F}$, and
    \item[(ii)] every $M\in \modA$ admits a short exact sequence
    \begin{align*}
        0 \longrightarrow L \longrightarrow M \longrightarrow N \longrightarrow 0
    \end{align*}
    with $L\in \mathcal{T}$ and $N\in \mathcal{F}$.
\end{itemize}
Further, $\mathcal{T}$ is called a \emph{torsion class} and $\mathcal{F}$ a \emph{torsion-free class}. Instead of (ii), one can define torsion pairs by demanding a maximality condition on $\mathcal{T}$ and $\mathcal{F}$ with respect to the orthogonality property (i), that is if $M\notin \mathcal{T}$, then there is $N\in \mathcal{F}$ with $\Hom(M,N) \neq 0$ and if $N\notin \mathcal{F}$, then there is $M\in \mathcal{T}$ with $\Hom(M,N) \neq 0$. This turns out to be an equivalent definition of torsion pairs. The following result is a well-known characterization of torsion classes and torsion-free classes in $\modA$.

\begin{lem}\label{tclosure} A full subcategory $\mathcal{C}$ of $\modA$ is a torsion(-free) class if and only if $\mathcal{C}$ is closed under extensions and factor modules (submodules).
\end{lem}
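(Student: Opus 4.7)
\medskip

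The plan is to treat the torsion class statement directly and obtain the torsion-free statement by duality via $D$, since $D$ sends torsion pairs in $\modA$ to torsion pairs in $\mod A^{\textnormal{op}}$ and swaps submodules with factor modules.

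For the forward direction, suppose $\mathcal{C} = \mathcal{T}$ sits in a torsion pair $(\mathcal{T}, \mathcal{F})$. To show closure under factor modules, let $M \in \mathcal{T}$ and $p \colon M \twoheadrightarrow M'$ be an epimorphism. Applying the torsion decomposition to $M'$, I get $0 \to L \to M' \to N \to 0$ with $L \in \mathcal{T}$, $N \in \mathcal{F}$. The composite $M \to M' \to N$ lies in $\Hom(\mathcal{T}, \mathcal{F}) = 0$, so it is zero; since $p$ is epi, $M' \to N$ is zero and thus $N = 0$, giving $M' \cong L \in \mathcal{T}$. For closure under extensions, let $0 \to L \to M \to N \to 0$ with $L, N \in \mathcal{T}$, and take the torsion decomposition $0 \to L' \to M \to N' \to 0$ of $M$. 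The composite $L \to M \to N'$ is zero, inducing a map $N \to N'$, which is again zero by orthogonality; hence $M \to N'$ is zero, $N' = 0$, and $M \cong L' \in \mathcal{T}$.

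For the reverse direction, assume $\mathcal{C}$ is closed under extensions and factor modules and define
\begin{align*}
    \mathcal{F} := \{N \in \modA : \Hom(\mathcal{C}, N) = 0\}.
\end{align*}
Orthogonality (i) is built into the definition. For (ii), fix $M \in \modA$ and let $L \subseteq M$ be the sum of all submodules of $M$ lying in $\mathcal{C}$. The key sub-step is that the sum of two submodules $L_1, L_2 \in \mathcal{C}$ of $M$ is again in $\mathcal{C}$: the short exact sequence $0 \to L_1 \to L_1 \oplus L_2 \to L_2 \to 0$ shows $L_1 \oplus L_2 \in \mathcal{C}$ by closure under extensions, and $L_1 + L_2$ is then a factor of $L_1 \oplus L_2$, hence in $\mathcal{C}$. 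Because $M$ has finite length, this chain of finite partial sums stabilizes, so $L \in \mathcal{C}$. It remains to prove $M/L \in \mathcal{F}$: if there were a nonzero $f \colon C \to M/L$ with $C \in \mathcal{C}$, then $\im f \in \mathcal{C}$ by closure under factor modules, and pulling back along $M \twoheadrightarrow M/L$ produces a submodule $M' \subseteq M$ fitting into $0 \to L \to M' \to \im f \to 0$. Closure under extensions gives $M' \in \mathcal{C}$, contradicting the maximality of $L$.

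The main obstacle is the construction of $L$ in step (ii): one has to promote closure under extensions and factor modules to closure under (finite) sums of submodules of a common module, and then combine this with finite length to get a well-defined largest $\mathcal{C}$-subobject of $M$. Once $L \in \mathcal{C}$ is established, the maximality argument forcing $M/L \in \mathcal{F}$ is a short diagram chase. The torsion-free half then follows from the torsion case by applying $D$.
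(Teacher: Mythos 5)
Your proof is correct. The paper does not actually prove Lemma \ref{tclosure}; it is quoted as a well-known classical fact, so there is no in-paper argument to compare against, and your proof is precisely the standard one: orthogonality plus the torsion decomposition for the forward direction, and for the converse the trace $L=\sum\{L'\subseteq M \mid L'\in \mathcal{C}\}$ (shown to lie in $\mathcal{C}$ via the sum-of-two-submodules trick and finite length), followed by the pullback/maximality argument forcing $\Hom(\mathcal{C},M/L)=0$, with the torsion-free half obtained by applying the duality $D$, which exchanges submodules and factor modules. The only implicit convention you use is that $\mathcal{C}$ is closed under isomorphisms and contains the zero module (so that $L=0$ is allowed), which is the usual reading of ``full subcategory'' in this context and not a genuine gap.
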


\subsection*{Subcategories and approximations}
We discuss approximations of modules with respect to full additive subcategories of $\modA$ as in \cite{AuslanderSmalo}. A morphism $f$ in $\modA$ is \emph{left (right) minimal} if $\alpha f = f$ (respectively $f\alpha = f$) implies that $\alpha$ is an isomorphism for all $\alpha$ such that the composition is defined. Let $\mathcal{C}$ be a full additive subcategory of $\modA$. A morphism $\varphi\colon M \rightarrow C_M$ is a \textit{left $\mathcal{C}$-approximation} of $M$ if $C_M \in \mathcal{C}$ and every morphism $M\rightarrow C$ with ${C} \in \mathcal{C}$ factors through $\varphi$, that is there exists a morphism $C_M\rightarrow C$ such that the diragram
\begin{equation*}
    \begin{tikzcd}
        M \arrow[r, "\varphi"] \arrow[rd] & C_M \arrow[d]\\
        & C 
    \end{tikzcd}
\end{equation*}
commutes. If $\varphi$ is also left minimal, then $\varphi$ is a \emph{left minimal $\mathcal{C}$-approximation}. If every $M\in \modA$ admits a left $\mathcal{C}$-approximation, then $\mathcal{C}$ is \textit{covariantly finite}. Dually, a morphism $\psi\colon C_N \rightarrow N$ is a \textit{right $\mathcal{C}$-approximation} of $N$ if $C_N\in \mathcal{C}$ and every morphism $C\rightarrow N$ with ${C} \in \mathcal{C}$ factors through $\psi$, that is there exists a morphism $C\rightarrow C_N$ such that the diragram
\begin{equation*}
    \begin{tikzcd}
        C_N \arrow[r, "\psi"] & N\\
        C \arrow[ru] \arrow[u] &
    \end{tikzcd}
\end{equation*}
commutes. If $\psi$ is also right minimal, then $\psi$ is a \emph{right minimal $\mathcal{C}$-approximation}. If every $N\in \modA$ admits a right $\mathcal{C}$-approximation, then $\mathcal{C}$ is \textit{contravariantly finite}. If $\mathcal{C}$ is both co- and contravariantly finite, then $\mathcal{C}$ is \textit{functorially finite}. Further, it is well-known that if $M$ admits a left (right) $\mathcal{C}$-approximation, then $M$ also admits a left (right) minimal $\mathcal{C}$-approximation.

Let $(\mathcal{T}, \mathcal{F})$ be a torsion pair and 
\begin{align*}
    0 \longrightarrow L \longrightarrow{} M \longrightarrow{} N \longrightarrow{} 0 
\end{align*}
a short exact sequence with $M\in \modA$ arbitrary, $L \in \mathcal{T}$ and $N \in \mathcal{F}$. Then the monomorphism $L \rightarrow M$ is always a right minimal $\mathcal{T}$-approximation and the epimorphism $M \rightarrow N$ a left minimal $\mathcal{F}$-approximation. In particular $\mathcal{T}$ is contravariantly finite and $\mathcal{F}$ covariantly finite. For $M\in \modA$ we denote by $\gen M$ ($\cogen M$) the collection of all modules that are isomorphic to a factor module (submodule) of $M^n$ for some $n\in \mathbbm{N}$. One might ask, when $\mathcal{T}$ and $\mathcal{F}$ are functorially finite. This is answered by the following result.

\begin{thm} {\rm \cite{Smalo}} Let $(\mathcal{T}, \mathcal{F})$ be a torsion pair in $\modA$. The following are equivalent. 
\begin{itemize}
    \item[\rm (i)] The torsion class $\mathcal{T}$ is functorially finite. 
    \item[\rm (ii)] There exists $M \in \modA$ such that $\mathcal{T} = \gen M$.
    \item[\rm (iii)] The torsion-free class $\mathcal{F}$ is functorially finite.
    \item[\rm (iv)] There exists $N \in \modA$ such that $\mathcal{F} = \cogen N$. 
\end{itemize}
\end{thm}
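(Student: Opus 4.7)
The plan is to reduce the equivalences via the canonical sequence of the torsion pair, prove (i) $\Leftrightarrow$ (ii) directly, and then obtain (iii) $\Leftrightarrow$ (iv) through the duality $D\colon \modA \to \mod A^{\op}$.

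First, for every $X \in \modA$ the canonical sequence $0 \to tX \to X \to X/tX \to 0$ coming from the torsion pair yields a right $\mathcal{T}$-approximation $tX \hookrightarrow X$: any morphism $T \to X$ with $T \in \mathcal{T}$ composes with $X \to X/tX \in \mathcal{F}$ to zero by the orthogonality of the pair, hence factors through $tX$. Dually, $X \twoheadrightarrow X/tX$ is a left $\mathcal{F}$-approximation. Thus $\mathcal{T}$ is always contravariantly finite and $\mathcal{F}$ always covariantly finite, so (i) reduces to the covariant finiteness of $\mathcal{T}$ and (iii) to the contravariant finiteness of $\mathcal{F}$.

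For (i) $\Rightarrow$ (ii), I would take a left $\mathcal{T}$-approximation $\varphi\colon A \to M$ of the regular module. Since $M \in \mathcal{T}$ and $\mathcal{T}$ is closed under quotients and finite direct sums, $\gen M \subseteq \mathcal{T}$; conversely, every $T \in \mathcal{T}$ is a quotient of some $A^n$ whose defining epimorphism factors through $\varphi^n\colon A^n \to M^n$, so $T \in \gen M$. The converse (ii) $\Rightarrow$ (i) is the main technical point: given $\mathcal{T} = \gen M$, the evaluation map $f\colon X \to M^n$ assembled from a set of generators of $\Hom(X, M)$ as a right $\textnormal{End}_A(M)$-module is only a left $\add M$-approximation, and a morphism $X \to T$ into a proper quotient $T$ of some $M^k$ need not factor through $f$. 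The fix is to combine $f$ with the canonical sequence of $M^n$ relative to $(\mathcal{T},\mathcal{F})$, absorbing the missing quotients into a pushout-type correction; the Artinian hypothesis on $A$ guarantees that this correction process terminates after finitely many steps and produces the required left $\gen M$-approximation.

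The equivalence (iii) $\Leftrightarrow$ (iv) then follows by applying the already established (i) $\Leftrightarrow$ (ii) to the dual torsion pair $(D\mathcal{F}, D\mathcal{T})$ in $\mod A^{\op}$, together with the identifications $D(\gen M) = \cogen DM$ and the fact that $D$ preserves functorial finiteness. To link the two halves I would verify (i) $\Rightarrow$ (iii) directly: a left $\mathcal{T}$-approximation of the injective cogenerator $DA$ produces, via its kernel, a module $N \in \mathcal{F}$ out of which right $\mathcal{F}$-approximations of arbitrary $X \in \modA$ can be built, and the converse (iii) $\Rightarrow$ (i) is dual. The principal obstacle throughout is the direction (ii) $\Rightarrow$ (i), since the naturally available left $\add M$-approximations do not automatically upgrade to left $\gen M$-approximations, and it is precisely here that the finite length of modules over the Artin algebra $A$ is used.
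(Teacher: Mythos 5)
The easy parts of your argument are fine (the canonical sequence giving contravariant finiteness of $\mathcal{T}$ and covariant finiteness of $\mathcal{F}$, the implication (i)$\Rightarrow$(ii), and the reduction of (iii)$\Leftrightarrow$(iv) to (i)$\Leftrightarrow$(ii) over $A^{\op}$ via $D$), but the two decisive implications are asserted rather than proved, exactly at the places you yourself flag as the crux. For (ii)$\Rightarrow$(i), your proposed fix is not a proof: the canonical sequence of $M^n$ relative to $(\mathcal{T},\mathcal{F})$ is trivial, since $M^n\in\gen M=\mathcal{T}$, so ``combining $f$ with it'' does nothing, and the ``correction process'' whose termination you attribute to the Artinian hypothesis is never specified (no step, no decreasing invariant, no reason the end result approximates). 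A correct argument needs no iteration at all: a left $\gen M$-approximation of $A$ exists because any morphism $A\to C$ with $C$ a quotient of $M^{s}$ lifts to $M^{s}$ by projectivity of $A$ and hence factors through the map $A\to M^{k}$ induced by a $k$-basis of $\Hom(A,M)$; then for arbitrary $X$ choose an epimorphism $A^{n}\to X$ and take a single pushout of $(A\to M^{k})^{n}$ along it -- the pushout target again lies in $\gen M$ and the universal property of the pushout makes $X\to P$ a left $\gen M$-approximation. This one-step pushout is precisely the mechanism the paper uses (Theorem \ref{ffiff}, (a)$\Rightarrow$(b), together with Corollary \ref{coro} and Proposition \ref{fff}); note that the statement itself is quoted from Smal{\o} and the paper only recovers it later through the ideal-torsion-pair machinery.

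The second gap is your bridge (i)$\Rightarrow$(iii). That the kernel of a left $\mathcal{T}$-approximation of $DA$ lies in $\mathcal{F}$ is not formal: torsion classes are not closed under submodules, so you must actually show the approximation is injective on the torsion submodule of $DA$, and even granted some $N\in\mathcal{F}$ you still have to prove $\mathcal{F}=\cogen N$, or otherwise construct right $\mathcal{F}$-approximations from it. This passage between the two halves is the genuinely hard content of the theorem: as the paper remarks before Proposition \ref{subfun}, Smal{\o}'s original proof of it relies on tilting theory, and the paper's tilting-free substitute is Theorem \ref{ffiff}, which goes through finitely presented subfunctors of $\mathbbm{1}_A$ and the duality on $\funfp{A}$ (Lemma \ref{duality}, Lemma \ref{dual}), afterwards specializing to torsion pairs via $\langle\mathcal{T}\rangle$ and $\langle\mathcal{F}\rangle$ (Remark \ref{torsion}, Lemma \ref{iff}). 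As it stands, your proposal leaves both (ii)$\Rightarrow$(i) and the link (i)$\Rightarrow$(iii) unproved.
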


A torsion pair fulfilling the equivalent properties in the above theorem is a \emph{functorially finite} torsion pair.


\subsection*{Ideals and approximations}

Passing from torsion pairs to ideal torsion pairs, we switch from full additive subcategories of $\modA$ to ideals of morphisms in $\modA$. In the later context, approximations with respect to ideals of morphisms in $\modA$ are relevant. For this we follow \cite{Fu} but switch the notation. Recall that a class of morphisms $\mathcal{I}$ in $\modA$ is an \textit{ideal} of $\modA$ if
\begin{itemize}
    \item[(i)] for all $\varphi, \psi \in \mathcal{I}$ we have $\varphi + \psi \in \mathcal{I}$ (if the addition is defined), and
    \item[(ii)] for all $\varphi\in \mathcal{I}$ and all $f,g$ we have $g \varphi f \in \mathcal{I}$ (if the composition is defined). 
\end{itemize}
For $M,N\in \modA$ we denote by $\mathcal{I}(M,N)$ the collection of all morphisms in $\mathcal{I}$ starting in $M$ and ending in $N$. Now $\mathcal{I}$ induces the additive functors $\mathcal{I}(M,-) \colon \modA \rightarrow \Ab$ and $\mathcal{I}(-,N) \colon \modA \rightarrow \Ab^\op$ in the canonical way.

A morphism $\varphi\colon M \rightarrow C_M$ is a \textit{left $\mathcal{I}$-approximation} of $M$ if $\varphi \in \mathcal{I}$ and every morphism $M\rightarrow C$ in $\mathcal{I}$ factors through $\varphi$, that is there exists a morphism $C_M\rightarrow C$ such that the diragram
\begin{equation*}
    \begin{tikzcd}
        M \arrow[r, "\varphi"] \arrow[rd] & C_M \arrow[d]\\
        & C 
    \end{tikzcd}
\end{equation*}
commutes. If $\varphi$ is also left minimal, then $\varphi$ is a \emph{left minimal $\mathcal{I}$-approximation}. If every $M\in \modA$ admits a left $\mathcal{I}$-approximation, then $\mathcal{I}$ is \textit{covariantly finite}. Dually, a morphism $\psi\colon C_N \rightarrow N$ is a \textit{right $\mathcal{I}$-approximation} of $N$ if $\psi \in \mathcal{I}$ and every morphism $C\rightarrow N$ in $\mathcal{I}$ factors through $\psi$, that is there exists a morphism $C\rightarrow C_N$ such that the diragram
\begin{equation*}
    \begin{tikzcd}
        C_N \arrow[r, "\psi"] & N\\
        C \arrow[ru] \arrow[u] &
    \end{tikzcd}
\end{equation*}
commutes. If $\psi$ is also right minimal, then $\psi$ is a \emph{right minimal $\mathcal{I}$-approximation}. If every $N\in \modA$ admits a right $\mathcal{I}$-approximation, then $\mathcal{I}$ is \textit{contravariantly finite}. If $\mathcal{I}$ is both co- and contravariantly finite, then $\mathcal{I}$ is \textit{functorially finite}. Further, if $M$ admits a left (right) $\mathcal{C}$-approximation, then $M$ also admits a left (right) minimal $\mathcal{C}$-approximation.

Given a full additive subcategory $\mathcal{C}$ of $\modA$, we can associate an ideal $\langle \mathcal{C} \rangle $ of all morphisms factoring through modules in $\mathcal{C}$. In this way, the notion of approximations with respect to ideals of $\modA$ generalizes the notion of approximations with respect to full additive subcategories of $\modA$ by the following result.

\begin{lem}\label{iff} Let $\mathcal{C}$ be a full additive subcategory of $\modA$. 
\begin{itemize}
    \item[\rm (a)] Every left (right) $\mathcal{C}$-approximation is also a left (right) $\langle \mathcal{C} \rangle$-approximation.
    \item[\rm (b)] Every left (right) minimal $\langle \mathcal{C} \rangle$-approximation is also a left (right) minimal $\mathcal{C}$-approximation.
\end{itemize}
\end{lem}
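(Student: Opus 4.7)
The statement splits into a left-handed and a right-handed part. I would only carry out the left-handed case, since the right-handed assertions then follow by applying the left-handed ones in $\mod A^{\op}$ via the duality $D$ (noting that $D$ sends $\langle \mathcal{C}\rangle$ to $\langle D\mathcal{C}\rangle$ and right approximations to left approximations, preserving minimality).

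Part (a) is essentially a direct unraveling of the definitions. Given a left $\mathcal{C}$-approximation $\varphi \colon M \to C_M$, the morphism $\varphi = \id_{C_M} \circ \varphi$ factors through $C_M \in \mathcal{C}$, so $\varphi \in \langle \mathcal{C}\rangle$. To verify the approximation condition, I would take an arbitrary $f \colon M \to X$ in $\langle \mathcal{C}\rangle$ and write $f = h g$ with $g \colon M \to C$, $h \colon C \to X$, and $C \in \mathcal{C}$. The $\mathcal{C}$-approximation property applied to $g$ yields $g' \colon C_M \to C$ with $g = g' \varphi$, so $f = (h g')\varphi$ factors through $\varphi$, as required.

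The real content sits in part (b), and the main obstacle is that the definition of a left $\langle \mathcal{C}\rangle$-approximation does not a priori constrain its codomain, whereas a $\mathcal{C}$-approximation must map into $\mathcal{C}$. So the key step is to show that a left minimal $\langle \mathcal{C}\rangle$-approximation $\varphi \colon M \to C_M$ automatically has $C_M \in \mathcal{C}$. The plan is as follows. Since $\varphi \in \langle \mathcal{C}\rangle$, factor $\varphi = \beta \alpha$ through some $C \in \mathcal{C}$, with $\alpha \colon M \to C$ and $\beta \colon C \to C_M$. The morphism $\alpha$ lies in $\langle \mathcal{C}\rangle$ as well (it factors through $C$ via the identity), so the approximation property of $\varphi$ gives $\gamma \colon C_M \to C$ with $\alpha = \gamma \varphi$. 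Then $\varphi = \beta \gamma \varphi$, and left minimality of $\varphi$ forces $\beta\gamma$ to be an isomorphism. This exhibits $C_M$ as a direct summand of $C$, and invoking closure of $\mathcal{C}$ under direct summands (the convention of ``full additive subcategory'' adopted here), one concludes $C_M \in \mathcal{C}$.

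Once $C_M \in \mathcal{C}$ is secured, the remainder is formal. Any $f \colon M \to C'$ with $C' \in \mathcal{C}$ lies automatically in $\langle \mathcal{C}\rangle$, so it factors through $\varphi$ by the $\langle \mathcal{C}\rangle$-approximation property; hence $\varphi$ is a left $\mathcal{C}$-approximation. The left minimality condition ``$\eta \varphi = \varphi$ implies $\eta$ is an isomorphism'' is the same in both settings and transfers without change. The dual argument for right minimal $\langle \mathcal{C}\rangle$-approximations is entirely symmetric, or alternatively is obtained by dualization through $D$.
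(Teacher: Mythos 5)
Your proposal is correct and follows essentially the same argument as the paper: for (a) you unravel a morphism in $\langle \mathcal{C} \rangle$ through an object of $\mathcal{C}$ and use the $\mathcal{C}$-approximation property, and for (b) you use the factorization $\varphi = \beta\alpha$ together with left minimality to realize $C_M$ as a direct summand of an object of $\mathcal{C}$, exactly as in the paper's proof. The paper likewise treats only the left case and leaves the right case to the symmetric (dual) argument, so no further comparison is needed.
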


\begin{proof} We only show the "left case". Let $\varphi \colon M \rightarrow C_M$ be a left $\mathcal{C}$-approximation. Then $\varphi$ factors through $C_M \in \mathcal{C}$, so $\varphi \in \langle \mathcal{C} \rangle$. Further, if $\psi \colon M \rightarrow C$ is in $\langle \mathcal{C} \rangle$ then it factors through a module in $\mathcal{C}$. Hence, $\psi$ must factor through the left $\mathcal{C}$-approximation $\varphi$. It follows that $\varphi$ is a left $\langle \mathcal{C} \rangle$-approximation.

Let $\varphi\colon M \rightarrow C_M$ be a left minimal $\langle \mathcal{C} \rangle$-approximation. Given $\psi \colon M \rightarrow C$ with $C\in \mathcal{C}$, clearly $\psi \in \langle \mathcal{C} \rangle$ so $\psi$ factors through $\varphi$. It is left to show $C_M\in \mathcal{C}$ for $\varphi$ to be a left minimal $\mathcal{C}$-approximation. Since $\varphi \in \langle \mathcal{C} \rangle$, there exists $f \colon M \rightarrow C$ and $g \colon C \rightarrow C_M$ with $C\in \mathcal{C}$ and $\varphi = gf$. Now $f$ must factor through $\varphi$, that is there exists $h\colon C_M\rightarrow C$ with $f = h \varphi$. Thus,   $\varphi = gh\varphi$. Because $\varphi$ is left minimal, it follows that $gh$ is an isomorphism. Hence, $C_M$ is a direct summand of $C$ and $C_M \in \mathcal{C}$. It follows that $\varphi$ is a left minimal $\mathcal{C}$-approximation. 
\end{proof}

\subsection*{The radical ideal}

A morphism $\varphi\colon M \rightarrow N$ in $\modA$ is \emph{radical} if for all indecomposable modules $X\in \modA$ and morphisms $f\colon X \rightarrow M, g\colon N \rightarrow X$ the composition $g \varphi f$ is a non-isomorphism. The \emph{radical ideal} of $\modA$ is the ideal consisting of all radical morphisms in $\modA$, denoted by $\radA$. It plays an important role in the application of ideal torsion pairs to the Krull-Gabriel dimension and the generalization of preprojective modules (Section 5 and 6).

A morphism $\varphi \colon X \rightarrow M$ in $\modA$ is \emph{left almost split} if $\varphi$ is not a split monomorphism and every morphism $X \rightarrow M'$ that is not a split monomorphism factors through $\varphi$. In that case $X$ must be indecomposable and morphisms starting in $X$ that are not split monomorphisms are precisely the radical morphisms. It follows that left almost split morphisms are left $\radA$-approximations starting in indecomposable modules. Arbitrary left $\radA$-approximations can be constructed out of those. Hence, the existence of left almost split morphisms (see \cite{AuslanderReiten}) implies that $\radA$ is covariantly finite. Dually, the existence of right almost split morphisms implies that $\radA$ is contraviantly finite.  

\begin{prop}\label{radical} The ideal $\radA$ is functorially finite. 
\end{prop}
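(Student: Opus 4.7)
The plan is to formalize the remarks immediately preceding the statement: Auslander--Reiten theory supplies left (and right) almost split morphisms at every indecomposable, and these are already the $\radA$-approximations at indecomposables; passing to arbitrary modules via Krull--Schmidt then produces $\radA$-approximations in general.

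First I would establish the indecomposable case. For each indecomposable $X\in \modA$, by the existence of almost split morphisms for Artin algebras (using AR sequences for non-injective $X$ and the canonical map $X \to X/\textnormal{soc}(X)$ for injective indecomposable $X$), there is a left almost split morphism $\varphi_X \colon X \to C_X$. As noted in the text, the morphisms out of the indecomposable $X$ that are not split monomorphisms are exactly the radical morphisms, so $\varphi_X\in \radA$ and every morphism in $\radA(X, N)$ factors through $\varphi_X$. Hence $\varphi_X$ is a left $\radA$-approximation of $X$.

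Next I would build approximations for arbitrary $M \in \modA$. By Krull--Schmidt, write $M = \bigoplus_{i=1}^{n} X_i$ with $X_i$ indecomposable, and set
\begin{equation*}
    \varphi_M \;=\; \bigoplus_{i=1}^n \varphi_{X_i} \;\colon\; M \;\longrightarrow\; \bigoplus_{i=1}^n C_{X_i}.
\end{equation*}
Since each $\varphi_{X_i}\in \radA$ and $\radA$ is an ideal, $\varphi_M\in \radA$. Given any $\psi\colon M \to N$ in $\radA$, each component $\psi_i \colon X_i \hookrightarrow M \xrightarrow{\psi} N$ lies in $\radA$, so by Step 1 it factors as $\psi_i = g_i \varphi_{X_i}$ for some $g_i \colon C_{X_i} \to N$. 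Assembling the $g_i$ into $g \colon \bigoplus C_{X_i} \to N$ gives $\psi = g \varphi_M$, so $\varphi_M$ is a left $\radA$-approximation of $M$. This shows $\radA$ is covariantly finite.

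Finally, the dual argument, using right almost split morphisms (AR sequences for non-projective indecomposables and $\textnormal{rad}(P)\hookrightarrow P$ for indecomposable projective $P$), yields that $\radA$ is contravariantly finite, and functorial finiteness follows. There is no real obstacle here; the only point requiring a little care is verifying that the diagonal assembly $\varphi_M$ lies in $\radA$ and that factorizations on each indecomposable summand glue to a factorization on $M$, both of which are immediate from the ideal property of $\radA$ and the biproduct structure.
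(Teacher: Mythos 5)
Your proof is correct and follows exactly the route the paper takes: left (right) almost split morphisms are left (right) $\radA$-approximations at indecomposables, and arbitrary approximations are assembled from these via the Krull--Schmidt decomposition. You merely make explicit the direct-sum gluing step that the paper leaves implicit in the sentence ``Arbitrary left $\radA$-approximations can be constructed out of those.''
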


\subsection*{Morphisms determined by objects}
In \cite{Auslander1} and \cite{Auslander2}, Auslander introduced the concept of morphisms determined by objects. In Section 3 we will introduce the notion of ideals determined by objects to give a second viewpoint on functorially finite ideal torsion pairs. Further, we establish a connection with morphisms determined by objects.  

A morphism $\varphi\colon M \rightarrow N$ in $\modA$ is \emph{left $C$-determined} for $C\in \modA$ provided the following condition is satisfied: For every morphism $\varphi' \colon M \rightarrow N'$ in $\modA$, if $f \varphi'$ factors through $\varphi$ for all $f \colon N' \rightarrow C$, then $\varphi'$ already factors through $\varphi$. As an example, if $\varphi$ is left almost split, then $\varphi$ is left $M$-determined. Dually, a morphism $\varphi\colon M \rightarrow N$ in $\modA$ is \emph{right $C$-determined} provided the following condition is satisfied: For every morphism $\varphi' \colon M' \rightarrow N$ in $\modA$, if $\varphi' f$ factors through $\varphi$ for all $f \colon C \rightarrow M'$, then $\varphi'$ already factors through $\varphi$. The following result is due to Auslander, with a slight correction by Ringel \cite{Ringel}.

\begin{thm}\label{ar} Let $\varphi$ be a morphism in $\modA$, $K$ its kernel and $Q$ its cokernel.
\begin{itemize}
    \item[\rm (a)] Then $\varphi$ is left $\tau Q \oplus I$-determined, where $\tau$ denotes the Auslander-Reiten translation and $I$ the injective hull of the top of $K$.
    \item[\rm (b)] Then $\varphi$ is right $\tau^{-} K \oplus P$-determined, where $\tau^{-}$ denotes the inverse of the Auslander-Reiten translation and $P$ the projective cover of the socle of $Q$.
\end{itemize}
\end{thm}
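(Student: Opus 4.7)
The plan is to prove part (b); part (a) then follows by applying (b) in $\mod A^\op$ and transporting back along the duality $D$, which exchanges $\tau$ with $\tau^{-}$, kernels with cokernels, tops with socles, and projective covers with injective hulls. I would factor $\varphi$ as $\varphi = \iota \pi$ with $\pi \colon M \twoheadrightarrow \im \varphi$ the canonical epimorphism and $\iota \colon \im \varphi \hookrightarrow N$ the canonical inclusion, so that $K = \ker \pi$ and $Q = \coker \iota$. Given $\varphi' \colon M' \to N$ with $\varphi' f$ factoring through $\varphi$ for every $f \colon \tau^{-}K \oplus P \to M'$, the argument splits along the two summands of $C := \tau^{-}K \oplus P$: first I would use $P$ to force the image of $\varphi'$ into $\im \varphi$, then use $\tau^{-}K$ to lift the resulting factorization through $\pi$.

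For the first step, I would show that $p\varphi' = 0$, where $p\colon N \to Q$ is the cokernel of $\iota$. Assume $p\varphi' \neq 0$. Then $\im(p\varphi')$ has nonzero socle (modules over an Artin algebra have finite length), hence contains a simple submodule $S$, which sits inside $\textnormal{soc}\,Q$. Since $\textnormal{soc}\,Q$ is semisimple, $S$ is a direct summand, and $P$ correspondingly decomposes as $P(S) \oplus P'$ with $P(S)$ the projective cover of $S$. Using the projectivity of $P(S)$ and the surjection $M' \twoheadrightarrow \im(p\varphi')$, the nonzero composite $P(S) \to S \hookrightarrow \im(p\varphi')$ lifts to a map $P(S) \to M'$; extending by zero on $P'$ gives $f \colon P \to M'$ with $p\varphi' f \neq 0$. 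But the hypothesis yields a factorization $\varphi' f = \varphi h$, whence $p\varphi' f = p\varphi h = 0$ since $p\varphi = 0$, a contradiction. Hence $\varphi' = \iota \varphi''$ for a unique $\varphi'' \colon M' \to \im \varphi$.

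For the second step, let $\xi \in \Ext(M', K)$ denote the pullback of the class of $0 \to K \to M \xrightarrow{\pi} \im \varphi \to 0$ along $\varphi''$; the lift of $\varphi''$ through $\pi$ exists iff $\xi = 0$. For each $f \colon \tau^{-}K \to M'$, the hypothesis combined with $\iota$ monic yields that $\varphi'' f = \pi h$ for some $h$, equivalently $f^*\xi = 0$ in $\Ext(\tau^{-}K, K)$. I would then invoke the Auslander-Reiten formula as a natural isomorphism $\Ext(-, K) \cong D\,\underline{\textnormal{Hom}}(\tau^{-}K, -)$, where $\underline{\textnormal{Hom}}$ denotes $\Hom$ modulo morphisms factoring through projective modules. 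Under this identification, $\xi$ corresponds to a linear functional on $\underline{\textnormal{Hom}}(\tau^{-}K, M')$ whose value on $f$ is read off from $f^*\xi \in \Ext(\tau^{-}K, K) \cong D\,\underline{\textnormal{End}}(\tau^{-}K)$ by evaluating at $\textnormal{id}_{\tau^{-}K}$. Since every $f^*\xi$ vanishes, this functional is zero, and non-degeneracy of the AR pairing forces $\xi = 0$. Composing the resulting lift with $\iota$ then produces the desired factorization of $\varphi'$ through $\varphi$.

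The main obstacle is Step 2: it hinges on invoking the Auslander-Reiten formula in its precise form, in particular the correct handling of projective direct summands of $K$ (on which $\tau^{-}$ vanishes) and, dually in part (a), injective summands of $Q$. This is where Ringel's correction enters; the auxiliary summand $P$ (respectively $I$) is designed to absorb precisely the contribution of these degenerate summands, so that together with $\tau^{-}K$ (respectively $\tau Q$) they detect the full obstruction to factorization through $\varphi$.
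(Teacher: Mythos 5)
The paper offers no proof of this theorem: it is quoted as a known result of Auslander, with Ringel's correction, and the cited references stand in for the argument, so there is nothing internal to compare against. Your proposal is correct and is essentially the classical proof from those sources: factor $\varphi=\iota\pi$ through its image, use the projective cover $P$ of $\textnormal{soc}\,Q$ to force $\im\varphi'\subseteq\im\varphi$ (your Step 1 is sound, including the lift of $P(S)\twoheadrightarrow S\hookrightarrow\im(p\varphi')$ through the epimorphism from $M'$ and the extension by zero to all of $\tau^{-}K\oplus P$), and then identify the obstruction $\xi\in\Ext(M',K)$ to lifting $\varphi''$ through $\pi$ and kill it by naturality of the Auslander--Reiten formula $\Ext(X,K)\cong D\,\underline{\textnormal{Hom}}_A(\tau^{-}K,X)$ in $X$, evaluated at the classes of the maps $f\colon\tau^{-}K\to M'$. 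The reduction of (a) to (b) via $D$ is also legitimate, since $D$ exchanges left and right determination, $\tau$ and $\tau^{-}$, kernels and cokernels, tops and socles, projective covers and injective hulls.

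The only thing to correct is your closing paragraph, which has the degenerate cases backwards: $\tau^{-}$ vanishes on injective modules and $\tau$ on projective ones, so there is no delicate issue with projective summands of $K$ in Step 2. The formula $\Ext(X,K)\cong D\,\underline{\textnormal{Hom}}_A(\tau^{-}K,X)$ holds for arbitrary $K$, injective summands of $K$ contributing zero to both sides, so your naturality argument is already complete as written. Likewise, the summand $P$ is not there to absorb degenerate summands of $K$; it is needed because $\varphi$ need not be an epimorphism, which is precisely what your Step 1 uses it for (for $\varphi=0\colon 0\to N$ one has $\tau^{-}K=0$, and right $0$-determination would fail, while right $P(\textnormal{soc}\,N)$-determination holds by your Step 1 argument). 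Whatever the historical correction by Ringel amounts to, the statement as printed in the paper is exactly what your two steps establish.
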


\subsection*{The functor category}
Let $\fun{A}$ be the abelian category of additive functors $\modA \rightarrow \Ab$ and $\funfp{A}$ the full subcategory of finitely presented functors in $\fun{A}$.  Recall that $F\in \fun{A}$ is \emph{finitely presented} if there exists a short exact sequence 
\begin{align*}
    \Hom(N,-) \longrightarrow \Hom(M,-) \longrightarrow F \longrightarrow 0
\end{align*}
with $M,N \in \modA$. A functor $F \in \fun{A}$ is \emph{finitely generated} if it is a factor of $\Hom(M,-)$ for some $M\in \modA$. The functor categories are well-studied (see for example \cite{Prest2}). The category $\funfp{A}$ is closed under extensions, kernels and cokernels in $\fun{A}$. As a consequence, it inherits the abelian structure of $\fun{A}$. The functor categories will help us to study (functorially finite) ideal torsion pairs (Section 2). Further, the definition of the Krull-Gabriel dimension takes place in $\funfp{A}$.

For $F\in \fun{A}$ and $M\in \modA$, since mulitplication with an element in $k$ is a morphism in $\modA$, we can consider $F(M)$ as a $k$-module. If $F$ is finitely presented, then $F(M) \in \mod k$. Now the duality $D$ induces a duality between $\funfp{A}$ and $\funfp{A^\op}$:  

\begin{lem}\label{duality}{\rm \cite[Proposition 3.3]{Auslander3}} There is a duality $d$ between $\funfp{A}$ and $\funfp{A^\op}$ given by $F \mapsto dF$ with $dF(M) = DF(DM)$.
\end{lem}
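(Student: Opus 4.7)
The approach is to define $d$ pointwise via the formula $dF(X) = DF(DX)$, compute its action on representable functors using the tensor-hom adjunction, extend to general finitely presented functors by projective presentations, and verify $d \circ d \cong \id$ using the biduality $DD \cong \id$.

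For $F \in \funfp{A}$ and $X \in \modA^{\op}$, I first set $dF(X) := DF(DX)$ on objects and $dF(f) := DF(Df)$ on morphisms. The two applications of the contravariant $D$ yield a covariant functor $\modA^{\op} \to \Ab$; since $F$ is finitely presented, $F(DX) \in \mod k$, so $D$ is applicable. A natural transformation $\eta \colon F \to G$ gives $d\eta \colon dG \to dF$ componentwise via $D\eta_{DX}$, making $d$ a contravariant additive functor $\funfp{A} \to \fun{A^{\op}}$.

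The key computation is on representables. The tensor-hom adjunction combined with $D = \textnormal{Hom}_k(-, I)$ gives, natural in $X$,
\[
\Hom(M, DX) = \Hom(M, \textnormal{Hom}_k(X, I)) \cong \textnormal{Hom}_k(X \otimes_A M, I) = D(X \otimes_A M),
\]
so $d\Hom(M, -)(X) \cong DD(X \otimes_A M) \cong X \otimes_A M$, i.e.\ $d\Hom(M, -) \cong -\otimes_A M$. Taking a projective presentation $A^n \to A^m \to M \to 0$ of $M$ and tensoring, together with $-\otimes_A A^k \cong \textnormal{Hom}_{A^{\op}}(A^k, -)$, produces a finite presentation of $-\otimes_A M$ in $\funfp{A^{\op}}$.

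For general $F \in \funfp{A}$ with presentation $\Hom(N, -) \to \Hom(M, -) \to F \to 0$, the pointwise definition of $d$ and exactness of $D$ yield a right-exact sequence $-\otimes_A N \to -\otimes_A M \to dF \to 0$, showing $dF \in \funfp{A^{\op}}$. Symmetrically, the same formula defines a functor $\funfp{A^{\op}} \to \funfp{A}$, and the biduality $DD \cong \id$ gives $d(dF)(M) = DDF(DDM) \cong F(M)$ naturally in $M$, proving $d \circ d \cong \id$ on both sides. The main technical obstacle is bookkeeping the naturality of the tensor-hom and biduality isomorphisms in every variable so that the pointwise isomorphisms assemble into natural transformations of functors of functors; once naturality is verified, the duality is a formal consequence.
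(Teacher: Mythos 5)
The paper offers no argument for this lemma (it simply cites Auslander), so your proposal stands on its own; its overall strategy -- the pointwise definition $dF(X) = DF(DX)$, the identification $d\Hom(M,-) \cong -\otimes_A M$ via tensor-hom adjunction, and $dd \cong \id$ via biduality -- is the standard route and is mostly sound. However, one step is wrong as written. Applying $D$ to the exact sequence $\Hom(N,DX) \to \Hom(M,DX) \to F(DX) \to 0$ does \emph{not} give a right-exact sequence $-\otimes_A N \to -\otimes_A M \to dF \to 0$: since $D$ is exact and contravariant, you obtain the left-exact sequence
\begin{align*}
0 \longrightarrow dF(X) \longrightarrow X\otimes_A M \longrightarrow X \otimes_A N.
\end{align*}
Indeed, the map you would need, $-\otimes_A N \to -\otimes_A M$, is not even induced by the data at hand: the presentation of $F$ comes via Yoneda from a morphism $f\colon M \to N$, and tensoring is covariant in that variable, so $f$ induces $-\otimes_A M \to -\otimes_A N$, opposite to your claimed direction. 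Thus $dF$ is a \emph{kernel}, not a cokernel, of a morphism between finitely presented functors on $\modA^{\op}$, and the displayed right-exact sequence is false in general.

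The gap is fixable without changing your framework: $\funfp{A^\op}$ is closed under kernels in $\fun{A^\op}$ (as recalled in the preliminaries of the paper), so the kernel of $-\otimes_A M \to -\otimes_A N$ is again finitely presented and $dF \in \funfp{A^\op}$ follows. With that substitution, the remaining steps -- functoriality of $d$, finite presentability of $-\otimes_A M$ from a projective presentation of $M$, and $d(dF)(M) = DDF(DDM) \cong F(M)$ naturally -- go through and yield the duality. But as stated, the appeal to ``exactness of $D$'' to produce a presentation of $dF$ by cokernels is a genuine variance error and would not survive scrutiny.
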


\subsection*{The Krull-Gabriel dimension}

Let $\mathcal{A}$ be an abelian category. A \emph{Serre subcategory} $\mathcal{S}$ of $\mathcal{A}$ is a full subcategory closed under extensions, subobjects and factor objects. The quotient category $\mathcal{A}/\mathcal{S}$ is again an abelian category (for more details, see \cite{Gabriel}).

Following Geigle \cite{Geigle}, the \emph{Krull-Gabriel dimension}, $\KG(A)$, of $A$ is defined as follows: Let $\mathcal{S}_{-1} = 0$ be the trivial Serre subcategory of $\funfp{A}$. If $\alpha$ is an ordinal of the form $\alpha = \beta +1$, let $\mathcal{S}_\alpha$ be the Serre subcategory of all objects in $\funfp{A}$ which become of finite length in $\funfp{A} /\mathcal{S}_{\beta}$. If $\lambda$ is a limit ordinal, then let $\mathcal{S}_{\lambda} = \bigcup_{\alpha < \lambda} \mathcal{S}_{\alpha}$. Now the Krull-Gabriel dimension of $A$ equals the smallest ordinal $\alpha$ with $\mathcal{S}_{\alpha} = \funfp{A}$. If no such $\alpha$ exists, then $\KG(A) = \infty$.

In Section 6 we connect the theory of ideal torsion pairs with the Krull-Gabriel dimension. The Krull-Gabriel dimension is an important homological dimension, as its value is connected with the representation type of $\modA$. We have $\KG (A) = 0$ if and only if $A$ is of finite representation type by a classical result of Auslander \cite{Auslander}. If $A$ is hereditary, then Geigle showed:

\begin{thm}{\rm \cite{Geigle}} Let $A$ be a hereditary Artin algebra. If $A$ is tame, then $\KG(A) = 2$ and if $A$ is wild, then $\KG(A) = \infty$.
\end{thm}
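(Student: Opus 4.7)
The plan is to analyze the Krull-Gabriel filtration $\mathcal{S}_{-1} \subseteq \mathcal{S}_0 \subseteq \mathcal{S}_1 \subseteq \cdots$ of $\funfp{A}$ using the representation theory of hereditary Artin algebras, where the indecomposable modules split into preprojective, regular, and preinjective components. Since $A$ is not representation finite (in either the tame or wild case) we have $\KG(A) \geq 1$ by Auslander's theorem, and by the Herzog/Krause result cited in the introduction $\KG(A) \neq 1$; in the tame case the target therefore reduces to showing $\mathcal{S}_2 = \funfp{A}$.

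For the tame case I would first identify $\mathcal{S}_0$ with the Serre subcategory generated by the simple functors $S_M$ attached to finite dimensional indecomposables $M$: by Auslander's theory of simple functors each such $S_M$ is finitely presented and of finite length, and conversely every finite length object in $\funfp{A}$ has composition factors of this form. Next, I would use the tame hereditary structure theorem — finitely many tubes of rank $>1$ together with tubes of rank one and a unique generic module — to analyze $\funfp{A}/\mathcal{S}_0$ via the ray and coray filtrations inside each tube. Modulo $\mathcal{S}_0$ the Pr\"ufer and adic limit points of each tube contribute only finitely many new simples, and the generic module contributes one more; hence every finitely presented functor becomes of finite length in the quotient, so it lies in $\mathcal{S}_1$, giving $\mathcal{S}_2 = \funfp{A}$. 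To see $\mathcal{S}_1 \neq \funfp{A}$ one exhibits any functor detecting a non-finite tube as a witness.

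For the wild case I would show $\mathcal{S}_\alpha \neq \funfp{A}$ for every ordinal $\alpha$ by exhibiting a representation embedding $\mod B \hookrightarrow \modA$ from a fixed wild test algebra $B$ (for instance the $n$-Kronecker algebra with $n \geq 3$, or a finite dimensional quotient of $k\langle x,y\rangle$), landing in a regular component of $\modA$. Such an embedding is exact and fully faithful on finitely generated modules, and restriction along it yields an exact functor $\funfp{A} \to \funfp{B}$ that respects the Serre filtration. Since $\KG(B) = \infty$ is classical — it follows, for example, from the existence of a Cantor-set-like family of pure-injective indecomposables over $B$ — the filtration in $\funfp{A}$ cannot stabilize either, so $\KG(A) = \infty$.

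The hard part is the tame case, specifically controlling the functors arising from the generic and Pr\"ufer/adic points: one must show that the lattice of pp-subfunctors of the forgetful functor at these points collapses to finite length modulo $\mathcal{S}_0$. This is where hereditariness (ensuring that pp-formulas correspond to a tractable divisibility theory) and tameness (reducing the generic endomorphism ring to a PID-like object with a sparse prime spectrum, and guaranteeing a \emph{unique} generic module) are both genuinely used, and it is the delicate heart of Geigle's original argument.
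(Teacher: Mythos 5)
The paper itself offers no proof of this statement---it is quoted from \cite{Geigle}---so your sketch can only be measured against the literature, and as it stands it is a roadmap with two genuine gaps. In the tame case your bookkeeping of the filtration is inconsistent: you assert that every finitely presented functor becomes of finite length in $\funfp{A}/\mathcal{S}_0$ and ``so lies in $\mathcal{S}_1$'', which by definition of the filtration would give $\mathcal{S}_1 = \funfp{A}$ and $\KG(A)\leq 1$, contradicting both the Herzog--Krause result you invoke and your own next sentence that $\mathcal{S}_1\neq\funfp{A}$. The correct two-layer statement is that the Pr\"ufer and adic points account for the new simples of $\mathcal{S}_1/\mathcal{S}_0$, and only modulo $\mathcal{S}_1$ (with the generic module supplying the last simple) does everything become of finite length. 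Moreover the actual verification---controlling arbitrary chains of finitely presented subfunctors of $\Hom(M,-)$ via the tame hereditary classification---is exactly the content of Geigle's theorem, and you explicitly defer it (``the delicate heart''). So the tame case is a plan for a proof, not a proof.

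In the wild case the transfer you rely on is not available for a mere representation embedding: restriction along a non-full embedding does not reflect inclusions of subfunctors (an inclusion $\widetilde{F}_1\subseteq\widetilde{F}_2$ of the lifted subfunctors of $\Hom(IY,-)$ need not force $F_1\subseteq F_2$ over $B$), and if representation embeddings did transfer $\KG$ in this way, the general conjecture ``wild $\Rightarrow \KG=\infty$'' quoted as open in the introduction would follow. You do say ``fully faithful'', so you are implicitly using that wild hereditary algebras are strictly wild---a nontrivial theorem that must be invoked, along with the fact that the embedding is of interpretation (tensor) type so that precomposition preserves finite presentation. You also need a non-circular source for $\KG(B)=\infty$: the natural hereditary test algebras (e.g.\ the $3$-Kronecker algebra) are precisely instances of the statement being proved, and ``a Cantor-set-like family of pure-injectives'' is itself something that has to be produced. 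The non-circular routes are either Geigle's direct construction of densely ordered chains of finitely presented subfunctors, or---staying within the tools already quoted in this paper---the fact that $\radA^{\infty}\neq 0$ for wild hereditary $A$ {\rm(}\cite[Proposition 8.15]{Krause}{\rm)} combined with \cite[Corollary 8.14]{Krause} ($\radA^{\omega\alpha}\neq 0$ implies $\KG(A)\geq\alpha$), which is exactly how the wild case of Theorem \ref{here} is handled for the torsion dimension.
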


\subsection*{The m-dimension of a modular lattice}
Let $(L, \lor, \land)$ be a lattice that is modular, which means $a \lor (x \land b) = (a \lor x) \land b$ for all $a,b,x \in L$ with $a \leq b$. For example, the collection of all subobjects of a fixed object in an abelian category is a modular lattice. For $x,y\in L$ we define $x\sim y$ if the interval $[x\land y, x\lor y]$ has finite length. Then $\sim$ defines an equivalence relation on $L$ such that $L/{\sim}$ is again a modular lattice and the canonical map $L \rightarrow L/{\sim}$ is a lattice homomorphism. Following Prest \cite{Prest88}, the \emph{m-dimension}, $\dim L$, of $L$ is defined as follows: Let $L_{-1} = L$. If $\alpha$ is an ordinal number of the form $\alpha = \beta +1$, let $L_{\alpha} = L_{\beta}/{\sim}.$ If $\lambda$ is a limit ordinal, then let $L_{\lambda} = \varinjlim {}_{\alpha < \lambda} L_{\alpha}$. Now the m-dimension of $L$ equals the smallest ordinal $\alpha$ such that $L_\alpha$ consists of exactly one element. If no such $\alpha$ exists, then $\dim L = \infty$. The following result yields a different way to compute $\KG(A)$ that will be important for the connection between ideal torsion pairs and the Krull-Gabriel dimension.

\begin{prop}\label{mdim}{\rm \cite[Proposition 7.2]{Krause}} Let $L$ be the modular lattice of finitely presented subfunctors of $\Hom(A,-)$. Then $\KG(A) = \dim L$.
\end{prop}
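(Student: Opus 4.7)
The plan is to match the transfinite filtration $(\mathcal{S}_\alpha)$ of $\funfp{A}$ defining $\KG(A)$ with the transfinite quotient-lattice filtration $(L_\alpha)$ defining $\dim L$, by realising both as shadows of the same operation on the abelian category. First I would interpret $L$ as the lattice of subobjects of $\Hom(A,-)$ inside $\funfp{A}$, and record the standard identification: for any $F_1 \subseteq F_2$ in $L$, the interval $[F_1,F_2]$ is lattice-isomorphic to the lattice of finitely presented subobjects of $F_2/F_1$. In particular, $[F_1,F_2]$ has finite length in $L$ if and only if $F_2/F_1$ has finite length as an object of $\funfp{A}$, i.e.\ $F_2/F_1 \in \mathcal{S}_0$; this handles the base case of the two filtrations.

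Next, by transfinite induction on $\alpha$, I would prove that $F_1$ and $F_2$ become identified in $L_\alpha$ if and only if $F_2/F_1 \in \mathcal{S}_\alpha$. The successor step relies on the localization $\funfp{A}\to\funfp{A}/\mathcal{S}_\beta$: the induction hypothesis lets one identify intervals in $L_\beta$ with subobject lattices in the quotient category, after which the $\sim$-equivalence passing from $L_\beta$ to $L_{\beta+1}$ becomes precisely the condition ``finite length in $\funfp{A}/\mathcal{S}_\beta$'', which is the defining property of $\mathcal{S}_{\beta+1}$. The limit step follows from the compatibility of unions of Serre subcategories with direct limits of quotient lattices.

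From this equivalence one reads off $\dim L = \KG(A)$: the lattice $L_\alpha$ is a singleton exactly when the two extreme elements $0,\Hom(A,-)$ of $L$ are identified at stage $\alpha$, i.e.\ when $\Hom(A,-) \in \mathcal{S}_\alpha$, and this in turn is equivalent to $\mathcal{S}_\alpha = \funfp{A}$. For the non-trivial direction, note that every $M \in \modA$ is a quotient of $A^n$, so every projective $\Hom(M,-)$ in $\funfp{A}$ embeds in $\Hom(A,-)^n$ and therefore lies in $\mathcal{S}_\alpha$; since $\mathcal{S}_\alpha$ is closed under extensions, subobjects and factors, every finitely presented functor (a cokernel between such projectives) lies in $\mathcal{S}_\alpha$ as well. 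The converse implication is immediate.

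The hard part will be the successor step of the induction. One has to set up a faithful dictionary between the intervals of $L$ that have already collapsed under $\beta$ iterations and genuine subobject lattices in the Serre quotient $\funfp{A}/\mathcal{S}_\beta$; this demands careful use of the exactness properties of the localization functor and of the way finite presentation interacts with Serre quotients (so that ``finitely presented subobject'' survives the localization in a controlled way). Once this dictionary is established, the generator-style reduction from $\Hom(A,-)$ to arbitrary finitely presented functors via powers $\Hom(A,-)^n$ in the last paragraph is comparatively routine.
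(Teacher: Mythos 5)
The paper does not actually prove this proposition; it is quoted verbatim from Krause's memoir (Proposition 7.2 there), so there is no in-paper argument to compare with. Judged on its own, your outline is the standard proof and is correct: the induction claim ``$F_1,F_2$ are identified in $L_\alpha$ if and only if $F_2/F_1\in\mathcal{S}_\alpha$'' is exactly the right invariant, and the endgame reduction from $\Hom(A,-)$ to all of $\funfp{A}$ via $\Hom(M,-)\hookrightarrow\Hom(A,-)^n$ is fine. The ``hard part'' you flag is genuinely the crux, but it is settled by two standard facts you should cite explicitly rather than leave as a promissory note: (i) since $\funfp{A}$ is closed under kernels in $\fun{A}$ (coherence of fp functors over an Artin algebra), subobjects in $\funfp{A}$ of an fp functor are precisely its fp subfunctors, so lattice length of $[F_1,F_2]$ really computes length of $F_2/F_1$ in $\funfp{A}$ and the interval $[F_1,F_2]$ is the lattice of subobjects of $F_2/F_1$; (ii) for a Serre subcategory $\mathcal{S}_\beta$ with exact quotient functor $\pi\colon\funfp{A}\to\funfp{A}/\mathcal{S}_\beta$, the map $G\mapsto\pi G$ is a surjective lattice homomorphism $\mathrm{Sub}(X)\to\mathrm{Sub}(\pi X)$ with $\pi G=\pi G'$ exactly when $(G\vee G')/(G\wedge G')\in\mathcal{S}_\beta$. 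Applying the induction hypothesis to \emph{all} pairs inside $[F_1,F_2]$ (not only the endpoints), and noting that any class in the interval $[\bar F_1,\bar F_2]\subseteq L_\beta$ has a representative $(G\vee F_1)\wedge F_2$ lying in $[F_1,F_2]$, you get a lattice isomorphism $[\bar F_1,\bar F_2]\cong\mathrm{Sub}_{\funfp{A}/\mathcal{S}_\beta}\bigl(\pi(F_2/F_1)\bigr)$, and the successor step reads off immediately: finite length of this interval is precisely $F_2/F_1\in\mathcal{S}_{\beta+1}$. With these two ingredients made explicit your argument is complete and coincides with the proof in the cited source.
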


\section{Ideal torsion pairs}

A pair of ideals $(\mathcal{I}, \mathcal{J})$ in $\modA$ is an \textit{ideal torsion pair} if 
\begin{itemize}
    \item[(i)] $\psi \varphi = 0$ for all $\varphi \in \mathcal{I}$ and $ \psi \in \mathcal{J}$ (if the composition is defined), and
    \item[(ii)] every $M\in \modA$ admits a short exact sequence
\begin{align*}
0 \longrightarrow L \xlongrightarrow{\varphi}  M \xlongrightarrow{\psi} N \longrightarrow 0
\end{align*}
with $\varphi \in \mathcal{I}$ and $\psi \in \mathcal{J}$.
\end{itemize}
Further, $\mathcal{I}$ is called a \textit{torsion ideal} and $\mathcal{J}$ a \textit{torsion-free ideal}. The concept of ideal torsion pairs was parallelly developed by the first three authors in \cite{Zhu} in a more general context.

\begin{rem}\label{itprem} \rm Let $(\mathcal{I}, \mathcal{J})$ be an ideal torsion pair, $M\in \modA$ and 
\begin{align*}
0 \longrightarrow L \xlongrightarrow{\varphi}  M \xlongrightarrow{\psi} N \longrightarrow 0
\end{align*}
a short exact sequence with $\varphi \in \mathcal{I}$ and $\psi \in \mathcal{J}$. Then for $f\colon X \rightarrow M$ in $\mathcal{I}$ we have $\psi f = 0$, so $f$ must factor through $\varphi$. Hence, $\varphi$ is a right $\mathcal{I}$-approximation. Similarly $\psi$ is a left $\mathcal{J}$-approximation. In particular $\mathcal{I}$ is contravarinatly finite and $\mathcal{J}$ covariantly finite.
\end{rem}

For an ideal $\mathcal{I}$ we denote by $\mathcal{I}^\perp$ the collection of all morphisms $\psi$ with $\psi \varphi = 0$ for all $\varphi\in \mathcal{I}$ and dually ${}^\perp \mathcal{I}$. One can easily verify that $\mathcal{I}^\perp$ and ${}^\perp \mathcal{I}$ are ideals. Similar to torsion pairs, also ideal torsion pairs can be defined by replacing (ii) with a maximality condition on $\mathcal{I}$ and $\mathcal{J}$ with respect to the orthogonality property (i). This is shown by the following result.

\begin{prop}\label{ortho}Let $\mathcal{I}$ and $\mathcal{J}$ be ideals in $\modA$.
\begin{itemize}
    \item[\rm (a)] The pair $({}^\perp (\mathcal{I}^\perp), \mathcal{I}^\perp)$ is an ideal torsion pair.
    \item[\rm (b)] The pair $({}^\perp \mathcal{J}, ({}^\perp \mathcal{J})^\perp)$ is an ideal torsion pair.
    \item[\rm (c)] The pair $(\mathcal{I}, \mathcal{J})$ is an ideal torsion pair if and only if $\mathcal{I}^\perp = \mathcal{J}$ and ${}^\perp \mathcal{J} = \mathcal{I}$.
\end{itemize}
\end{prop}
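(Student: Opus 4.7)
The plan is to prove (a) by an explicit construction of the required short exact sequence, to obtain (b) by a parallel (dual) construction, and then to deduce (c) from (a) together with a factoring argument.

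For (a), given $M \in \modA$, I would set
\begin{align*}
    tM := \sum_{\varphi} \im \varphi,
\end{align*}
where $\varphi$ ranges over the morphisms in $\mathcal{I}$ with codomain $M$. This is a submodule of $M$, hence lies in $\modA$ (since $A$ is Artin). Consider the canonical sequence
\begin{align*}
    0 \longrightarrow tM \xlongrightarrow{\iota_M} M \xlongrightarrow{\pi_M} M/tM \longrightarrow 0.
\end{align*}
For $\pi_M \in \mathcal{I}^\perp$: every $\varphi \in \mathcal{I}$ ending at $M$ satisfies $\im\varphi \subseteq tM = \ker \pi_M$, so $\pi_M \varphi = 0$. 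For $\iota_M \in {}^\perp(\mathcal{I}^\perp)$: any $\psi \in \mathcal{I}^\perp$ starting at $M$ has $\psi \varphi = 0$ for each generating morphism $\varphi \in \mathcal{I}$ with codomain $M$, so $\psi$ vanishes on $tM$, i.e.\ $\psi \iota_M = 0$. The orthogonality condition for $({}^\perp(\mathcal{I}^\perp), \mathcal{I}^\perp)$ is built into the definition of ${}^\perp(\mathcal{I}^\perp)$.

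For (b), a parallel construction applies: set $sM := \bigcap_{\psi} \ker \psi$, where $\psi$ ranges over the morphisms in $\mathcal{J}$ with domain $M$, and use the sequence $0 \to sM \to M \to M/sM \to 0$. The checks that the inclusion lies in ${}^\perp \mathcal{J}$ and that the projection lies in $({}^\perp \mathcal{J})^\perp$ are symmetric to (a). Alternatively, one could pass through the duality $D$ between $\modA$ and $\mod A^{\op}$ to derive (b) from (a) applied in $\mod A^{\op}$.

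For (c), the direction $(\Leftarrow)$ is immediate: if $\mathcal{I} = {}^\perp \mathcal{J}$ and $\mathcal{J} = \mathcal{I}^\perp$, then $(\mathcal{I}, \mathcal{J}) = ({}^\perp(\mathcal{I}^\perp), \mathcal{I}^\perp)$, which is an ideal torsion pair by (a). For $(\Rightarrow)$, the inclusions $\mathcal{I} \subseteq {}^\perp \mathcal{J}$ and $\mathcal{J} \subseteq \mathcal{I}^\perp$ restate (i). Conversely, for $\psi \in \mathcal{I}^\perp$ with domain $M$, I would apply (ii) to $M$ to obtain $0 \to L \xlongrightarrow{\varphi_M} M \xlongrightarrow{\pi_M} N \to 0$ with $\varphi_M \in \mathcal{I}$, $\pi_M \in \mathcal{J}$. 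Since $\psi \varphi_M = 0$, the morphism $\psi$ factors as $g \pi_M$; as $\mathcal{J}$ is an ideal (closed under left composition), $\psi \in \mathcal{J}$. Dually, any $\varphi \in {}^\perp \mathcal{J}$ with codomain $M$ satisfies $\pi_M \varphi = 0$, so by the universal property of $\ker \pi_M = \varphi_M$ it factors through $\varphi_M$, whence $\varphi \in \mathcal{I}$.

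The main obstacle is essentially bookkeeping of domains and codomains: in the definitions of $\mathcal{I}^\perp$ and ${}^\perp \mathcal{J}$ each composition only makes sense for compatible morphisms, so in constructing $tM$ (resp.\ $sM$) one must restrict to morphisms in $\mathcal{I}$ ending at $M$ (resp.\ in $\mathcal{J}$ starting at $M$) and argue accordingly when testing orthogonality against them. Beyond this careful unwinding of the ideal-theoretic definitions, the proof requires no further representation-theoretic input.
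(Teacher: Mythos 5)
Your proposal is correct and follows essentially the same route as the paper: in (a) you build the short exact sequence from the sum of images of morphisms in $\mathcal{I}$ ending in $M$ and verify both orthogonality memberships directly, in (b) you dualize, and in (c) you use the cokernel/kernel factorization through the sequence from (ii) together with the ideal property, exactly as in the paper's argument.
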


\begin{proof} (a) Clearly $\psi \varphi = 0$ for all $\varphi \in {}^\perp (\mathcal{I}^\perp)$ and $\psi \in \mathcal{I}^\perp$. For $M\in \modA $ let $L \subseteq M$ be the sum of all images of morphisms in $\mathcal{I}$ ending in $M$. Then $M \rightarrow M/L$ is contained in $\mathcal{I}^\perp$. It is left to show that the inclusion $\varphi \colon L \rightarrow M$ is contained in ${}^\perp (\mathcal{I}^\perp)$. Suppose there exists $\psi \in \mathcal{I}^\perp$ with $\psi \varphi \neq 0$. Then by the definition of $L$ there exists $\varphi'\colon X \rightarrow M$ in $\mathcal{I}$ with $\psi \varphi' \neq 0$, which is a contradiction. Thus,   always $\psi \varphi = 0$ and $\varphi \in {}^\perp (\mathcal{I}^\perp)$.

(b) Similar to (a).

(c) If $\mathcal{I}^\perp = \mathcal{J}$ and ${}^\perp \mathcal{J} = \mathcal{I}$, then $({}^\perp (\mathcal{I}^\perp), \mathcal{I}^\perp)$ equals $(\mathcal{I}, \mathcal{J})$ and is an ideal torsion pair by (a). If $(\mathcal{I}, \mathcal{J})$ is an ideal torsion pair, then $ \psi \varphi = 0$ for all $\varphi \in \mathcal{I}$ and $\psi \in \mathcal{J}$. Thus,   $\mathcal{I}^\perp \supseteq \mathcal{J}$. Now let $f \colon M \rightarrow N$ be a morphism in $\mathcal{I}^\perp$ and consider a short exact sequence
\begin{align*}
0 \longrightarrow L \xlongrightarrow{\varphi}  M \xlongrightarrow{\psi} N \longrightarrow 0
\end{align*}
with $\varphi \in \mathcal{I}$ and $\psi \in \mathcal{J}$. Then $f \varphi = 0$ implies that $f$ factors through $\psi$. Hence,  $f \in \mathcal{J}$. It follows that $\mathcal{I}^\perp = \mathcal{J}$ and similarly ${}^\perp \mathcal{J} = \mathcal{I}$.
\end{proof}

As for torsion classes and torsion-free classes, we have an interal characterization for torsion ideals and torsion-free ideals. However, in each case it only requires one closure property instead of two (compare Lemma \ref{tclosure}). This already hints at the fact that, in general, there are much more ideal torsion pairs then torsion pairs.

\begin{lem}\label{charai}\begin{itemize}\item[\rm (a)] An ideal $\mathcal{I}$ of $\modA$ is a torsion ideal if and only if $\varphi f \in \mathcal{I}$ implies $\varphi \in \mathcal{I}$ for all morphisms $\varphi$ and epimorphisms $f$.
    \item[\rm (b)] An ideal $\mathcal{J}$ of $\modA$ is a torsion-free ideal if and only if $g \psi \in \mathcal{J}$ implies $\psi \in \mathcal{J}$ for all morphisms $\psi$ and monomorphisms $g$.
\end{itemize}
\end{lem}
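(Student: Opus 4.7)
The arguments for (a) and (b) are formally dual to each other, so I would focus on (a) and let (b) follow by applying (a) in $\mod A^{\op}$ via the duality $D$. Accordingly, the plan is to prove both implications of (a).

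For the forward direction, I would use Proposition \ref{ortho}(c): if $\mathcal{I}$ is a torsion ideal with complement $\mathcal{J}$, then $\mathcal{I} = {}^{\perp}\mathcal{J}$ and $\mathcal{J} = \mathcal{I}^{\perp}$. Suppose $\varphi f \in \mathcal{I}$ with $f$ an epimorphism. Then for any $\psi \in \mathcal{J}$ composable with $\varphi$, the identity $\psi(\varphi f) = (\psi\varphi)f = 0$ together with $f$ being an epimorphism forces $\psi \varphi = 0$, so $\varphi \in {}^{\perp}\mathcal{J} = \mathcal{I}$.

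For the backward direction, I would verify that $(\mathcal{I}, \mathcal{I}^{\perp})$ itself is an ideal torsion pair, which by definition makes $\mathcal{I}$ a torsion ideal. Orthogonality is immediate from the definition of $\mathcal{I}^{\perp}$. For the existence of the short exact sequence, fix $M \in \modA$ and let $L \subseteq M$ be the submodule generated by the images of all morphisms in $\mathcal{I}$ ending in $M$. Since $M$ is Noetherian, $L$ is a finite sum $L = \sum_{i=1}^{n} \im \varphi_i$ for some $\varphi_i \in \mathcal{I}$ ending in $M$. The crux is to prove that the inclusion $\iota_L \colon L \hookrightarrow M$ lies in $\mathcal{I}$. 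To this end, factor each $\varphi_i = \iota_i \pi_i$ with $\pi_i$ the epimorphism onto $\im \varphi_i$ and $\iota_i \colon \im \varphi_i \hookrightarrow M$; applying the assumed closure property to $\varphi_i = \iota_i \pi_i \in \mathcal{I}$ yields $\iota_i \in \mathcal{I}$. Since $\mathcal{I}$ is closed under finite sums of morphisms (ideal axiom (i)), the induced map $\Phi \colon \bigoplus_{i=1}^{n} \im \varphi_i \to M$ lies in $\mathcal{I}$. Factoring $\Phi = \iota_L p$ with $p$ an epimorphism onto $L$ and applying the closure property a second time gives $\iota_L \in \mathcal{I}$. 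Finally, the projection $M \twoheadrightarrow M/L$ lies in $\mathcal{I}^{\perp}$, since any morphism in $\mathcal{I}$ ending in $M$ has image inside $L$ and is therefore killed by the projection. This yields the desired short exact sequence $0 \to L \to M \to M/L \to 0$.

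I expect the main obstacle to be the clean argument that $\iota_L \in \mathcal{I}$: the closure property must be invoked twice, once to upgrade each $\varphi_i$ to its image-inclusion $\iota_i$, and once more to upgrade the combined sum-map $\Phi$ to $\iota_L$, with the finite-sum axiom of an ideal bridging the two steps. The reduction from a potentially infinite family of images to a finite one is exactly where Noetherianity of $M \in \modA$ is used.
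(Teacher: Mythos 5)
Your proposal is correct and follows essentially the same route as the paper: the forward direction via Proposition \ref{ortho}(c) and cancellation of the epimorphism, and the backward direction by showing $(\mathcal{I},\mathcal{I}^{\perp})$ is an ideal torsion pair, taking $L\subseteq M$ to be the (finite, by finite length) sum of images of morphisms in $\mathcal{I}$ ending in $M$ and upgrading the induced map $\bigoplus_i \to M$ in $\mathcal{I}$ to the inclusion $L\hookrightarrow M$ via the closure property. The only cosmetic differences are that you invoke the closure property once extra to replace each $\varphi_i$ by its image inclusion (the paper works with the $\varphi_i$ directly) and you obtain (b) by duality rather than by dualizing the argument verbatim; both are fine.
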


\begin{proof} (a) Let $(\mathcal{I}, \mathcal{J})$ be an ideal torsion pair and $\varphi f \in \mathcal{I}$ for a morphism $\varphi$ and an epimorphism $f$. Then $\psi \varphi f = 0$ for all $\psi \in \mathcal{J}$. Because $f$ is epic, also $\psi \varphi = 0$ for all $\psi \in \mathcal{J}$. Now Proposition \ref{ortho} (c) implies $\varphi \in \mathcal{I}$.

For the other implication, let $\mathcal{I}$ be an ideal such that $\varphi f\in\mathcal{I}$ implies $\varphi \in \mathcal{I}$ for all morphisms $\varphi$ and epimorphisms $f$. We show that $(\mathcal{I}, \mathcal{I}^\perp)$ is an ideal torsion pair. Similar to the proof of Proposition \ref{ortho}, for $M\in \modA$ let $L\subseteq M$ be the sum of all images of morphisms in $\mathcal{I}$ ending in $M$. Then $M \rightarrow M/L$ is contained in $\mathcal{I}^\perp$. It is left to show that the inclusion $\varphi \colon L \rightarrow M$ is contained in $\mathcal{I}$. Because $M$ is of finite length, it follows that $L$ is a finite sum and there exist $\varphi_i' \colon X_i \rightarrow M$ in $\mathcal{I}$ such that the image of the induced morphism $\varphi'\colon \bigoplus_{i=1}^n X_i \rightarrow M$ equals $L$. Now $\varphi_i' \in \mathcal{I}$ implies $\varphi' \in \mathcal{I}$ and if $f\colon \bigoplus_{i=1}^n X_i \rightarrow L$ denotes the projection onto the image of $\varphi'$, then $\varphi f = \varphi' \in \mathcal{I}$. Since $f$ is an epimorphism, the morphism $\varphi$ is contained in $\mathcal{I}$.

(b) Similar to (a).
 \end{proof}

Next, we want to show that for an ideal torsion pair $(\mathcal{I}, \mathcal{J})$ the ideal $\mathcal{I}$ is functorially finite if and only if so is the ideal $\mathcal{J}$. The same result holds for torsion pairs \cite{Smalo}, however its proof heavily relies on tilting theory which is not available in our context. Instead, we relate ideal torsion pairs to certain functors and apply theory of the functor category $\funfp{A}$ to deduce the desired result.

We denote by $\mathbbm{1}_A$ the identity functor $\modA \rightarrow \modA$ and view $\mathbbm{1}_A$ as a functor in the abelian category of additive functors from $\modA$ to $\modA$. Hence, a subfunctor $t$ of $\mathbbm{1}_A$ assigns to each module $X$ a submodule $tX$ of $X$.

\begin{prop}\label{subfun} There exists a one to one correspondence
\begin{align*}
   \left\{ \begin{matrix}\text{ideal torsion pairs}\\ \text{$(\mathcal{I}, \mathcal{J})$ in $\modA$}\end{matrix} \right\}&\longleftrightarrow \{\text{subfunctors of }\mathbbm{1}_A\}
\end{align*}defined as follows.
\begin{itemize}
    \item[\rm (i)] \textit{For $M\in \modA$ consider a short exact sequence
\begin{align*}
0 \longrightarrow L \xlongrightarrow{\varphi}  M \xlongrightarrow{\psi} N \longrightarrow 0
\end{align*}
with $\varphi \in \mathcal{I}$ and $\psi \in \mathcal{J}$. Then $M\mapsto \im \varphi$ defines a subfunctor of $\mathbbm{1}_A$.}
    \item[\rm (ii)] \textit{A subfunctor $t$ of $\mathbbm{1}_A$ defines an ideal torsion pair $(\mathcal{I}, \mathcal{J})$ in $\modA$ by}
    \begin{align*}
        \mathcal{I} &= \{\varphi\colon L \rightarrow M \mid \im \varphi \subseteq tM\},\\
        \mathcal{J} &= \{\psi\colon M \rightarrow N \mid tM \subseteq \ker \psi\}.
    \end{align*}
\end{itemize}
\end{prop}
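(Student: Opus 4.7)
The plan is to build the two assignments, check that each is well-defined, and then verify they are mutually inverse.

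For the forward map in (i), I first show that $tM := \im \varphi$ depends only on $M$. By Remark \ref{itprem}, $\varphi$ is a right $\mathcal{I}$-approximation of $M$, so every morphism $\varphi' \in \mathcal{I}$ ending at $M$ factors through $\varphi$ and thus $\im \varphi' \subseteq \im \varphi$; equivalently, $tM$ is the sum of images of all morphisms in $\mathcal{I}$ ending at $M$, which is manifestly independent of any choice. Functoriality $f(tM) \subseteq tM'$ for $f\colon M \to M'$ is then immediate, since $f \varphi \in \mathcal{I}$ as $\mathcal{I}$ is an ideal, and so its image lies in $tM'$. This gives a subfunctor of $\mathbbm{1}_A$.

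For the backward map in (ii), I verify that $\mathcal{I}$ and $\mathcal{J}$ are ideals. Closure of $\mathcal{I}$ under post-composition uses functoriality of $t$: if $\im \varphi \subseteq tM$ and $g \colon M \to M'$, then $\im(g\varphi) = g(\im \varphi) \subseteq g(tM) \subseteq tM'$; closure under pre-composition and addition is straightforward. The dual argument handles $\mathcal{J}$. Orthogonality is immediate, since $\im \varphi \subseteq tM \subseteq \ker \psi$ forces $\psi \varphi = 0$. The required short exact sequence for each $M$ is the canonical $0 \to tM \to M \to M/tM \to 0$, whose inclusion lies in $\mathcal{I}$ and whose projection lies in $\mathcal{J}$ by construction.

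It remains to check that the two constructions are mutually inverse. Starting from a subfunctor $t$, forming $(\mathcal{I}, \mathcal{J})$ and then the associated subfunctor $t'$ uses precisely the canonical sequence above, so $t'M = tM$. Starting from an ideal torsion pair $(\mathcal{I}, \mathcal{J})$, forming $t$ and then the associated pair $(\mathcal{I}', \mathcal{J}')$, the inclusion $\mathcal{I} \subseteq \mathcal{I}'$ is tautological from the definition of $t$. For the reverse inclusion $\mathcal{I}' \subseteq \mathcal{I}$, given $\varphi' \in \mathcal{I}'$ with $\im \varphi' \subseteq tM$, I factor the defining morphism $\varphi \in \mathcal{I}$ as $L \twoheadrightarrow tM \hookrightarrow M$ through its image, and invoke Lemma \ref{charai}(a) to conclude that the inclusion $tM \hookrightarrow M$ lies in $\mathcal{I}$; then $\varphi'$ factors through this inclusion and thus belongs to $\mathcal{I}$. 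The dual argument, using Lemma \ref{charai}(b) together with the fact that the projection $M \to M/tM$ in the defining sequence lies in $\mathcal{J}$, yields $\mathcal{J} = \mathcal{J}'$.

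The main obstacle is this last round-trip, specifically the direction $\mathcal{I}' \subseteq \mathcal{I}$ (and its dual): bare image containment must be upgraded to actual membership in $\mathcal{I}$, and this is exactly where the internal characterization of torsion ideals in Lemma \ref{charai} is indispensable.
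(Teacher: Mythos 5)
Your proposal is correct and takes essentially the same approach as the paper: functoriality of $M \mapsto \im \varphi$ via the right $\mathcal{I}$-approximation property of Remark \ref{itprem}, the canonical sequence $0 \to tM \to M \to M/tM \to 0$ for the backward direction, and then the round-trip check. You merely spell out the mutually-inverse step more explicitly than the paper does; note that since the defining morphism $\varphi$ is monic, the inclusion $tM \hookrightarrow M$ is already $\varphi$ up to isomorphism, so your appeal to Lemma \ref{charai} there is valid but not strictly necessary.
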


\begin{proof} (i) Let $f\colon M \rightarrow M'$ be an arbitrary morphism in $\modA$. The morphism $\varphi \colon L \rightarrow M$ is a monic left $\mathcal{I}$-approximation by Remark \ref{itprem}. Similarly, let $\varphi' \colon L' \rightarrow M'$ be a monic left $\mathcal{I}$-approximation. Then $f \varphi \in \mathcal{I}$ must factor through $\varphi'$. Thus,  $f(\im \varphi) \subseteq \im \varphi'$ and the described assignment defines a subfunctor of $\mathbbm{1}_A$.

(ii) First we show that $\mathcal{I}$ and $\mathcal{J}$ are ideals. Let $\varphi\colon L \rightarrow M$ and $\varphi'\colon L' \rightarrow M$ be in $\mathcal{I}$. Then $\im \varphi, \im \varphi' \subseteq tM$. Hence,  $\im (\varphi + \varphi') \subseteq tM$ and $\varphi + \varphi'$ is contained in $\mathcal{I}$. For a morphism $f\colon X\rightarrow L $ clearly $\im \varphi f \subseteq \im \varphi \subseteq tM$ so $\varphi f \in \mathcal{I}$. For a morphism $g\colon M \rightarrow Y$, because $t$ is a subfunctor of $\mathbbm{1}_A$, the inclusion $g(tM) \subseteq tY$ holds. Thus,  $\im g \varphi =  g(\im \varphi ) \subseteq g(tM) \subseteq tY$. It follows that $g \varphi \in \mathcal{I}$ and $\mathcal{I}$ is an ideal. Similarly $\mathcal{J}$ is an ideal. By definition of $\mathcal{J}$ and $\mathcal{I}$, we have $\psi \varphi = 0$ for all $\varphi \in \mathcal{I}$ and $\psi \in \mathcal{I} = 0$. Further, every $M\in \modA$ admits the short exact sequence
\begin{align*}
    0 \longrightarrow tM \xlongrightarrow{\varphi} M \xlongrightarrow{\psi} M/tM \longrightarrow 0
\end{align*}
with $\varphi\in \mathcal{I}$ and $\psi \in \mathcal{J}$. It follows that $(\mathcal{I}, \mathcal{J})$ is an ideal torsion pair. Since the short exact sequences are also given as above in (i), it follows that the assignments are mutually inverse.
\end{proof}

\begin{rem}\label{latticecor} \rm We can consider the collection of ideal torsion pairs as a lattice by $(\mathcal{I}, \mathcal{J}) \leq (\mathcal{I}', \mathcal{J}')$ if $\mathcal{I} \subseteq \mathcal{I}'$ (or equivalently $\mathcal{J} \supseteq \mathcal{J}'$). The intersection of torsion(-free) ideals is a again a torsion(-free) ideal since they are characterized by fulfilling a closure property (Lemma \ref{charai}). Hence, the meet of the lattice is given by $(\mathcal{I}\cap \mathcal{I}', (\mathcal{I}\cap\mathcal{I}')^\perp )$ and the join by $({}^\perp (\mathcal{J} \cap  \mathcal{J}'), \mathcal{J}\cap \mathcal{J}')$ for ideal torsion pairs $(\mathcal{I}, \mathcal{J})$ and $(\mathcal{I}', \mathcal{J}')$. Because the assignment in Proposition \ref{subfun} is order-preserving, it follows that the collection of ideal torsion pairs is a complete modular lattice (as so is the lattice of subfunctors of $\mathbbm{1}_A$). Note that the lattice of torsion pairs is not necessarily modular.
\end{rem}

By the assignment in Proposition \ref{subfun}, we will analyse ideal torsion pairs through the corresponding subfunctors of $\mathbbm{1}_A$.

\begin{lem}\label{homiso}Let $(\mathcal{I}, \mathcal{J})$ be an ideal torsion pair in $\modA$ and $t$ the corresponding subfunctor of $\mathbbm{1}_A$. The natural isomorphism $\Hom(A,-)$ $\cong \mathbbm{1}_A$ restricts to an isomorphism $\mathcal{I}(A,-) \cong t$.
\end{lem}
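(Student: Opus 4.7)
The plan is to unwind the definitions on both sides of the natural isomorphism $\Hom(A,-)\cong \mathbbm{1}_A$ and check that, pointwise at each $M\in\modA$, the subset $\mathcal{I}(A,M)$ of $\Hom(A,M)$ corresponds exactly to the submodule $tM$ of $M$. Naturality of the restriction will then be automatic since it is inherited from the original natural isomorphism.

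First I would recall that the standard isomorphism $\eta_M\colon \Hom(A,M)\to M$ is given by evaluation at $1_A$, namely $\eta_M(\varphi)=\varphi(1_A)$, with inverse sending $m\in M$ to the morphism $a\mapsto am$. Under this identification, the image of a morphism $\varphi\colon A\to M$ is precisely the cyclic submodule $A\cdot\varphi(1_A)$ generated by $\eta_M(\varphi)$; in particular $\im\varphi\subseteq tM$ if and only if $\varphi(1_A)\in tM$, because $tM$ is a submodule of $M$.

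Next I would invoke the description of $\mathcal{I}$ from Proposition \ref{subfun}(ii): a morphism $\varphi\colon A\to M$ lies in $\mathcal{I}$ exactly when $\im\varphi\subseteq tM$. Combining this with the previous observation gives
\begin{align*}
\eta_M\bigl(\mathcal{I}(A,M)\bigr)=\{\varphi(1_A)\mid \varphi\colon A\to M,\ \im\varphi\subseteq tM\}=tM,
\end{align*}
where the last equality uses that every $m\in tM$ is realized as $\varphi(1_A)$ for the morphism $\varphi\colon A\to M$, $a\mapsto am$, which has image $Am\subseteq tM$ and hence lies in $\mathcal{I}$.

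Finally, for naturality, given $f\colon M\to M'$ in $\modA$, the square for $\Hom(A,-)\cong \mathbbm{1}_A$ commutes by construction, so restricting to the subfunctors $\mathcal{I}(A,-)$ and $t$ yields a commuting square as soon as the restriction is well defined on both sides — which was just verified. There is essentially no obstacle here; the only thing to be mildly careful about is the symmetric verification that $\eta_M$ restricts to a surjection onto $tM$ (not just an injection into it), which is handled by exhibiting the explicit preimage $a\mapsto am$ for each $m\in tM$.
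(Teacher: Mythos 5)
Your proposal is correct and follows essentially the same route as the paper: identify $\Hom(A,M)\cong M$ via evaluation at $1_A$, use the description of $\mathcal{I}$ from Proposition \ref{subfun} to see that $\varphi\in\mathcal{I}(A,M)$ forces $\varphi(1)\in tM$, and conversely realize each $x\in tM$ by the morphism $a\mapsto ax$, which lies in $\mathcal{I}$ since its image is contained in $tM$. The paper's proof is the same argument, with naturality likewise inherited from the ambient isomorphism.
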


\begin{proof} The natural isomorphism is given by $\Hom(A,M) \ni \varphi \mapsto \varphi(1)$ for $M\in \modA$. Now if $\varphi \in \mathcal{I}(A,M)$, then $\im \varphi \subseteq tM$ and in particular $\varphi(1) \in t M $. Conversly, for $x\in t M $ let $\varphi\colon A\rightarrow M$ be defined by $\varphi(1) = x$. Then $\im \varphi \subseteq t M $ and $\varphi \in \mathcal{I}(A,M)$. Hence,  the natural isomorphism restricts to $\mathcal{I}(A,-) \cong t$.
\end{proof}

Recall that $D$ denotes the duality between $\modA$ and $\modA^\op$. For an ideal $\mathcal{I}$ in $\modA$ we denote by $D \mathcal{I}$ the collection of all morphisms $\varphi$ in $\modA^\op$ isomorphic to $D\psi$ for some $\psi \in \mathcal{J}$. Clearly $D\mathcal{I}$ is an ideal of $\modA^\op$.

\begin{lem}\label{dual}Let $(\mathcal{I}, \mathcal{J})$ be an ideal torsion pair in $\modA$ and $t$ the corresponding subfunctor of $\mathbbm{1}_A$. Then $(D\mathcal{J}, D\mathcal{I})$ is an ideal torsion pair and the corresponding subfunctor of $\mathbbm{1}_{A^\op}$ is isomorphic to $D (\mathbbm{1}_A/t) D$.
\end{lem}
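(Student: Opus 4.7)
The strategy is to first verify that $(D\mathcal{J}, D\mathcal{I})$ is an ideal torsion pair in $\mod A^\op$ by dualizing the canonical short exact sequence in $\modA$, and then to explicitly compute the corresponding subfunctor of $\mathbbm{1}_{A^\op}$ on objects and check naturality.

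For the first claim, the orthogonality relation is automatic: if $\varphi \in \mathcal{I}$ and $\psi \in \mathcal{J}$ compose, then $\psi\varphi = 0$, hence $D\varphi \circ D\psi = D(\psi\varphi) = 0$ in $\mod A^\op$. For the short exact sequence condition, any $X \in \mod A^\op$ is isomorphic to $DM$ for some $M \in \modA$ since $D$ is a duality. Applying the exact functor $D$ to the canonical sequence
\begin{align*}
0 \longrightarrow tM \xlongrightarrow{\varphi} M \xlongrightarrow{\psi} M/tM \longrightarrow 0
\end{align*}
associated to $(\mathcal{I}, \mathcal{J})$ yields
\begin{align*}
0 \longrightarrow D(M/tM) \xlongrightarrow{D\psi} DM \xlongrightarrow{D\varphi} D(tM) \longrightarrow 0
\end{align*}
in $\mod A^\op$, with $D\psi \in D\mathcal{J}$ and $D\varphi \in D\mathcal{I}$, which is precisely the required sequence for $X$.

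For the subfunctor identification, let $t'$ denote the subfunctor of $\mathbbm{1}_{A^\op}$ attached to $(D\mathcal{J}, D\mathcal{I})$ via Proposition \ref{subfun}. By construction $t'(X)$ is the image of the $D\mathcal{J}$-morphism in the canonical sequence ending at $X$. For $X = DM$ this image is $\im(D\psi)$, which, since $\psi$ is epic and $D$ is exact, equals $D(M/tM)$ as a submodule of $DM$. On the other hand, $(D(\mathbbm{1}_A/t)D)(DM) = D((\mathbbm{1}_A/t)(M)) = D(M/tM)$, using $D^2 \cong \mathbbm{1}$. Thus the two functors agree on objects, and naturality follows from the functoriality in $M$ of the canonical sequence together with the functoriality of $D$: a morphism $f\colon M \to M'$ in $\modA$ induces a commutative ladder of canonical $t$-sequences, which $D$ transforms into a commutative ladder of dual sequences precisely matching the action of $D(\mathbbm{1}_A/t)D$ on $Df$.

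The argument is essentially formal bookkeeping and no substantial new idea is needed beyond exactness of $D$ and Proposition \ref{subfun}. The one point requiring care is the arrow-reversal under the duality: one must identify $t'(DM)$ with the image of the dualized $\mathcal{J}$-morphism, corresponding to $D(M/tM)$, rather than with the dualized $\mathcal{I}$-morphism. Getting this identification right is what produces $D(\mathbbm{1}_A/t)D$ (and not, say, $D\,t\,D$) on the nose.
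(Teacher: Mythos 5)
Your proposal is correct and follows essentially the same route as the paper: dualize the canonical sequence $0 \to tM \to M \to M/tM \to 0$ to obtain the required sequences in $\mod A^{\op}$ (orthogonality being immediate from $D(\psi\varphi)=0$), and then read off the corresponding subfunctor as the image $D(M/tM)$, which identifies it with $D(\mathbbm{1}_A/t)D$. The point you flag about the arrow reversal, namely that the torsion side of the dual pair is $D\mathcal{J}$ and hence the subfunctor is built from $\mathbbm{1}_A/t$ rather than $t$, is exactly how the paper's proof proceeds.
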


\begin{proof} For $D \varphi \in D \mathcal{I}$ and $D \psi \in D \mathcal{J}$ the equality $D \varphi D \psi = D (\psi \varphi) = 0$ holds. For $M\in \modA^\op$ there exists a short exact sequence
\begin{align*}
    0 \longrightarrow D (DM/tDM) \longrightarrow M \longrightarrow D tM \longrightarrow 0
\end{align*}
with the first morphism contained in $D \mathcal{J}$ and the second one contained in $D \mathcal{I}$. Thus,  $(D \mathcal{J}, D\mathcal{I})$ is an ideal torsion pair. Further, by the above short exact sequence, the corresponding subfunctor of $\mathbbm{1}_{A^\op}$ is equivalent to $D(1/t) D$.
\end{proof}

Given an additive functor $F \colon \modA \rightarrow \modA$, we can also view $F$ as a functor from $\modA$ to $\Ab$ by composing it with the forgetful functor $U \colon \modA \rightarrow \Ab$. We say $F$ is \emph{finitely presented} if $U F \in \funfp{A}$.

\begin{thm}\label{ffiff}Let $(\mathcal{I}, \mathcal{J})$ be an ideal torsion pair and $t$ the corresponding subfunctor of $\mathbbm{1}_A$. The following are equivalent:
\begin{itemize}
    \item[\rm (a)] \textit{The module $ A$ admits a left $\mathcal{I}$-approximation.}
    \item[\rm (b)] \textit{The ideal $\mathcal{I}$ is functorially finite.} 
    \item[\rm (c)] \textit{The functor $t$ is finitely presented.}
    \item[\rm (a)'] \textit{The module $D A^\op$ admits a right $\mathcal{J}$-approximation.}
    \item[\rm (b)'] \textit{The ideal $\mathcal{J}$ is functorially finite.} 
    \item[\rm (c)'] \textit{The functor $D(\mathbbm{1}_A/ t)D$ is finitely presented.}
\end{itemize}
\end{thm}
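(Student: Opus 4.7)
The plan is to establish $(a)\Leftrightarrow (c)\Leftrightarrow (b)$, then pass to the primed analogues by applying this chain to the dual ideal torsion pair $(D\mathcal{J}, D\mathcal{I})$ supplied by Lemma \ref{dual}, and finally bridge the two triples via $(c)\Leftrightarrow (c)'$. The central identification throughout is Lemma \ref{homiso}: under the Yoneda isomorphism $\Hom(A,-)\cong \mathbbm{1}_A$, the subfunctor $t$ coincides with $\mathcal{I}(A,-)$. Consequently, a morphism $\varphi\colon A\to C$ is a left $\mathcal{I}$-approximation of $A$ exactly when the image of $\varphi^{*}\colon \Hom(C,-)\to \Hom(A,-)$ equals $\mathcal{I}(A,-)\cong t$, so $(a)$ is equivalent to $t$ being finitely generated. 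In that case $t$ is the image of a morphism between representable, hence finitely presented, functors; since $\funfp{A}$ is an abelian subcategory of $\fun{A}$ closed under images, $t$ itself is finitely presented. The converse is immediate, giving $(a)\Leftrightarrow (c)$.

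For $(c)\Rightarrow (b)$, fix $M\in \modA$ and choose an epimorphism $\pi\colon A^m\twoheadrightarrow M$. Because $\mathcal{I}$ is a torsion ideal, Lemma \ref{charai}(a) gives $f\in \mathcal{I}$ iff $f\pi\in \mathcal{I}$ for every $f\colon M\to X$, so via the monic natural transformation $\pi^{*}\colon \Hom(M,-)\hookrightarrow \Hom(A^m,-)$ we identify
\begin{align*}
\mathcal{I}(M,-)\;=\;\ker\bigl(\Hom(M,-)\xrightarrow{\pi^{*}}\Hom(A^m,-)\twoheadrightarrow \Hom(A^m,-)/\mathcal{I}(A^m,-)\bigr).
\end{align*}
By ideal additivity and Lemma \ref{homiso}, $\mathcal{I}(A^m,-)\cong t^m$, so the quotient on the right is isomorphic to $(\mathbbm{1}_A/t)^m$; under $(c)$ this is finitely presented, whence so is the above kernel $\mathcal{I}(M,-)$. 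A chosen surjection $\Hom(C,-)\twoheadrightarrow \mathcal{I}(M,-)$ then corresponds by Yoneda to a left $\mathcal{I}$-approximation $M\to C$, and together with the contravariant finiteness of $\mathcal{I}$ recorded in Remark \ref{itprem} this yields $(b)$. Since $(b)\Rightarrow (a)$ is trivial, the unprimed cycle closes.

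The primed triple $(a)'\Leftrightarrow (b)'\Leftrightarrow (c)'$ now follows by applying the established unprimed chain to the ideal torsion pair $(D\mathcal{J}, D\mathcal{I})$ in $\mod A^\op$, whose associated subfunctor is $D(\mathbbm{1}_A/t)D$ by Lemma \ref{dual}. Under the duality $D$, left $D\mathcal{J}$-approximations in $\mod A^\op$ translate into right $\mathcal{J}$-approximations in $\modA$ and functorial finiteness is manifestly preserved, while the finite presentation condition on $D(\mathbbm{1}_A/t)D$ is literally $(c)'$. To link the two triples it suffices to observe $(c)\Leftrightarrow (c)'$: Lemma \ref{duality} gives $D(\mathbbm{1}_A/t)D$ finitely presented iff $\mathbbm{1}_A/t$ is, and the short exact sequence $0\to t\to \mathbbm{1}_A\to \mathbbm{1}_A/t\to 0$ combined with the finite presentation of $\mathbbm{1}_A\cong \Hom(A,-)$ and closure of $\funfp{A}$ under kernels and cokernels forces $t$ finitely presented iff $\mathbbm{1}_A/t$ is.

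The main technical step is the kernel computation for $\mathcal{I}(M,-)$ in the second paragraph; the rest is Yoneda together with formal duality bookkeeping. The subtle point throughout is keeping careful track of which functors land in $\funfp{A}$, which reduces to the abelian-subcategory closure properties of $\funfp{A}$ recalled in the preliminaries.
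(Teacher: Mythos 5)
Your proof is correct, and it shares most of its skeleton with the paper's argument: the Yoneda identification of (a) with finite generation of $t \cong \mathcal{I}(A,-)$ inside $\Hom(A,-)$ (this is exactly how the paper handles (b)$\Rightarrow$(c) and (c)$\Rightarrow$(a)), the passage to the primed statements by applying the unprimed chain to the dual pair $(D\mathcal{J}, D\mathcal{I})$ via Lemma \ref{dual}, and the bridge (c)$\Leftrightarrow$(c)' via Lemma \ref{duality}. Where you genuinely diverge is in how covariant finiteness of $\mathcal{I}$ is obtained. The paper proves (a)$\Rightarrow$(b) module-theoretically: it pushes the left $\mathcal{I}$-approximation $\varphi\colon A \to C_A$ out along an epimorphism $A^n \to M$ and verifies directly that the induced morphism $M \to P$ is a left $\mathcal{I}$-approximation. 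You instead prove (c)$\Rightarrow$(b) inside the functor category: using Lemma \ref{charai}(a) you identify $\mathcal{I}(M,-)$ with the kernel of the composite $\Hom(M,-) \to \Hom(A^m,-) \to (\mathbbm{1}_A/t)^m$ induced by an epimorphism $A^m \to M$, conclude it is finitely presented from the closure of $\funfp{A}$ under kernels and cokernels, and then recover a left $\mathcal{I}$-approximation of $M$ by Yoneda, exactly as the paper does for $A$ itself. Both routes are valid. The paper's pushout is more elementary and exhibits the approximation explicitly from the one for $A$; your route yields the stronger intermediate statement that $\mathcal{I}(M,-)$ is finitely presented for \emph{every} $M$, not just for $A$, at the price of invoking Lemma \ref{charai} and the abelian structure of $\funfp{A}$ at that step.
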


\begin{proof} (a)$\Rightarrow$(b): By Remark \ref{itprem} the ideal $\mathcal{I}$ is always contravariantly finite. It is left to show that $\mathcal{I}$ is covariantly finite. Let $M\in \modA$ and $\pi\colon A^n \rightarrow M$ an epimorphism. Given a left $\mathcal{I}$-approximation $\varphi\colon A\rightarrow C_A$ consider the following pushout diagram
\begin{equation*}
    \begin{tikzcd}
        A^n \arrow[r, "\pi"] \arrow[d, "\varphi^n", swap] & M \arrow[d, "\psi"] \\
        C_A^n \arrow[r] & P.
    \end{tikzcd}
\end{equation*}
We will show that $\psi$ is a left $\mathcal{I}$-approximation. By commutativity of the diagram $\psi \pi \in \mathcal{I}$ so $\im \psi = \im \psi \pi \subseteq tP$. Hence,  $\psi \in \mathcal{I}$. Now let $f\colon M \rightarrow X$ be a morphism in $\mathcal{I}$. Then $f \pi \in \mathcal{I}$ must factor through $\varphi^n$. By the universal property of the pushout diagram, there exists a morphism $g \colon P \rightarrow X$ with $g \psi = f$. It follows that $\psi $ is a left $\mathcal{I}$-approximation.

(b)$\Rightarrow$(c): By Lemma \ref{homiso} we can show that $\mathcal{I}(A,-) \subseteq \Hom(A,-)$ is finitely presented. Because $\Hom(A,-)$ is finitely presented, it is enough to show that $\mathcal{I}(A,-)$ is finitely generated. Let $\varphi \colon A \rightarrow C_A$ be a left $\mathcal{I}$-approximation. We show that $\im \Hom(\varphi,-) = \mathcal{I}(A,-)$. Because $\varphi\in \mathcal{I}$, the inclusion "$\subseteq$" holds. Now let $f\colon A\rightarrow X$ be a morphism in $\mathcal{I}$. Then $f$ factors through $\varphi$ and so $f\in \im \Hom(\varphi, X)$. Hence,  also "$\supseteq$" holds.

(c)$\Rightarrow$(a): By Lemma \ref{homiso} the functor $\mathcal{I}(A,-) \subseteq \Hom(A,-)$ is finitely presented. Hence, there exists an epimorphism $\Hom(M,-) \rightarrow \mathcal{I}(A,-)$ with $M\in \modA$. Now by the Yoneda lemma there exists a morphism $\varphi\colon A\rightarrow M$ with $\im \Hom(\varphi,-) = \mathcal{I}(A,-)$. We show that $\varphi$ is a left $\mathcal{I}$-approximation. First $\varphi =  \Hom(\varphi, M) (1_M)$ so $\varphi \in \mathcal{I}$. Now let $f\colon A\rightarrow X$ be a morphism in $\mathcal{I}$. Then $f\in \mathcal{I}(A,X) = \im \Hom(\varphi, X)$ so $f$ factors through $\varphi$. It follows that $\varphi$ is a left $\mathcal{I}$-approximation.

(a)'$\Leftrightarrow$(b)'$\Leftrightarrow$(c)': If $(\mathcal{I}, \mathcal{J})$ is an ideal torsion pair in $\modA$, then $(D\mathcal{J}, D\mathcal{I})$ is an ideal torsion pair by Lemma \ref{dual} with the correponding subfunctor of $\mathbbm{1}_{A^\op}$ equivalent to $D(\mathbbm{1}_A / t) D$. By duality, $ DA^\op$ admits a right $\mathcal{J}$-approximation iff $A^\op$ admits a left $D\mathcal{J}$-approximation and the ideal $\mathcal{J}$ is functorially finite iff $D\mathcal{J}$ is functorially finite. Hence,  the equivalences follow from (a)$\Leftrightarrow$(b)$\Leftrightarrow$(c).

(c)$\Leftrightarrow$(c)': Because $\mathbbm{1}_A \cong \Hom(A,-)$, the functor $t$ is finitely presented if and only if $\mathbbm{1}_A/t$ is finitely presented. By Lemma \ref{duality}, the functor $\mathbbm{1}_A/t$ is finitely presented if and only if $D(\mathbbm{1}_A /t)D$ is finitely presented.
\end{proof}

An ideal torsion pair $(\mathcal{I}, \mathcal{J})$ in $\modA$ is called \emph{functorially finite} if one of the equivalent conditions in Theorem \ref{ffiff} is fulfilled.

\begin{coro}\label{corres}The one to one correspondence
\begin{align*}
   \left\{\begin{matrix}\text{ideal torsion pairs}\\ \text{$(\mathcal{I}, \mathcal{J})$ in $\modA$}\end{matrix}\right\} &\longleftrightarrow \{\text{subfunctors of }\mathbbm{1}_A\}
\end{align*}
restricts to a one to one correspondence
\begin{align*}
    \left\{\begin{matrix}\text{functorially finite}\\ \text{ideal torsion pairs}\\ \text{$(\mathcal{I}, \mathcal{J})$ in $\modA$}\end{matrix}\right\} &\longleftrightarrow \left\{\begin{matrix}
 \text{finitely presented} \\ 
 \text{subfunctors of }\mathbbm{1}_A
\end{matrix} \right\}.
\end{align*}\\
\end{coro}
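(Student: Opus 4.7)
The plan is to observe that this corollary is essentially an immediate bookkeeping consequence of Theorem \ref{ffiff} combined with Proposition \ref{subfun}, so almost no new work is required.

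First, I would recall that Proposition \ref{subfun} already gives the outer bijection between ideal torsion pairs $(\mathcal{I},\mathcal{J})$ in $\modA$ and subfunctors $t$ of $\mathbbm{1}_A$, sending $(\mathcal{I},\mathcal{J})$ to the functor $M\mapsto tM:=\im\varphi$ determined by the canonical short exact sequence. So all I need to check is that this bijection identifies the two distinguished subclasses, namely the functorially finite ideal torsion pairs on the left and the finitely presented subfunctors of $\mathbbm{1}_A$ on the right.

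Next, I would unwind the definition of functorial finiteness introduced just after Theorem \ref{ffiff}: $(\mathcal{I},\mathcal{J})$ is called functorially finite precisely when any of the equivalent conditions (a)--(c), (a)'--(c)' of Theorem \ref{ffiff} holds. In particular, condition (c) of Theorem \ref{ffiff} reads exactly as ``$t$ is finitely presented'', using the convention (introduced immediately before Theorem \ref{ffiff}) that a functor $F\colon\modA\to\modA$ is finitely presented when $UF\in\funfp{A}$ with $U$ the forgetful functor. Hence, under the correspondence of Proposition \ref{subfun}, the condition ``$(\mathcal{I},\mathcal{J})$ is functorially finite'' on the left corresponds precisely to the condition ``$t$ is a finitely presented subfunctor of $\mathbbm{1}_A$'' on the right.

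Putting the two observations together yields the claimed restricted bijection. There is no real obstacle: the only subtlety worth spelling out is that one must use the convention that ``finitely presented subfunctor of $\mathbbm{1}_A$'' means finitely presented after composition with the forgetful functor $U$, exactly as in Theorem \ref{ffiff}(c), so that the equivalence (b)$\Leftrightarrow$(c) of Theorem \ref{ffiff} directly translates the left-hand condition into the right-hand one.
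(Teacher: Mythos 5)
Your proposal is correct and matches the paper's reasoning exactly: the paper states Corollary \ref{corres} without further argument, treating it as an immediate consequence of the bijection in Proposition \ref{subfun} together with the equivalence (b)$\Leftrightarrow$(c) of Theorem \ref{ffiff} and the definition of functorially finite ideal torsion pairs. Your observation about the convention that ``finitely presented'' means finitely presented after composing with the forgetful functor is the right point to keep in mind, and nothing more is needed.
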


\begin{coro} If $(\mathcal{I}, \mathcal{J})$ and $(\mathcal{I}', \mathcal{J}')$ are functorially finite ideal torsion pairs, then so are $(\mathcal{I}\cap \mathcal{I}', (\mathcal{I}\cap\mathcal{I}')^\perp )$ and $({}^\perp (\mathcal{J} \cap  \mathcal{J}'), \mathcal{J}\cap \mathcal{J}')$.
\end{coro}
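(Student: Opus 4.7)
The plan is to transport everything through the bijection of Corollary \ref{corres} between functorially finite ideal torsion pairs and finitely presented subfunctors of $\mathbbm{1}_A$, and then verify the statement at the level of subfunctors, where it reduces to standard abelian-category closure properties of $\funfp{A}$.

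Write $t$ and $t'$ for the subfunctors of $\mathbbm{1}_A$ corresponding under Proposition \ref{subfun} to $(\mathcal{I}, \mathcal{J})$ and $(\mathcal{I}', \mathcal{J}')$ respectively; by Corollary \ref{corres} both are finitely presented. First I would check that the meet $(\mathcal{I}\cap \mathcal{I}', (\mathcal{I}\cap\mathcal{I}')^\perp)$ described in Remark \ref{latticecor} corresponds to the subfunctor $t \cap t'$ defined by $(t\cap t')M = tM \cap t'M$. Indeed, by the explicit form of the bijection in Proposition \ref{subfun}(ii), a morphism $\varphi\colon L\to M$ lies in $\mathcal{I}\cap \mathcal{I}'$ iff $\im \varphi \subseteq tM$ and $\im \varphi \subseteq t'M$, iff $\im \varphi \subseteq (t\cap t')M$. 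Dually, using that $\mathcal{J}\cap\mathcal{J}'$ consists of the morphisms $\psi$ with $tM + t'M \subseteq \ker \psi$, the join $({}^\perp(\mathcal{J}\cap\mathcal{J}'), \mathcal{J}\cap \mathcal{J}')$ corresponds to the subfunctor $t+t'$ defined by $(t+t')M = tM + t'M$.

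Given these identifications, by Corollary \ref{corres} it suffices to show that $t\cap t'$ and $t+t'$ are finitely presented subfunctors of $\mathbbm{1}_A$. Both statements are immediate once one recalls that $\funfp{A}$ is closed under kernels, cokernels, and extensions in $\fun{A}$, hence is itself abelian. For the sum, $t+t'$ is the image of the canonical morphism $t\oplus t' \to \mathbbm{1}_A$; since the source and target lie in $\funfp{A}$ and images in an abelian subcategory closed under kernels and cokernels remain in the subcategory, we obtain $t+t' \in \funfp{A}$. For the intersection, $\mathbbm{1}_A/t'$ is finitely presented as the cokernel in $\funfp{A}$ of $t' \hookrightarrow \mathbbm{1}_A$, and $t\cap t'$ is the kernel of the composite $t \hookrightarrow \mathbbm{1}_A \twoheadrightarrow \mathbbm{1}_A/t'$, a morphism between finitely presented functors; hence $t\cap t' \in \funfp{A}$.

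The main (and really only) nontrivial point is the identification of the lattice operations on ideal torsion pairs with the pointwise intersection and sum of the corresponding subfunctors of $\mathbbm{1}_A$. Once this is done, the conclusion is a formal consequence of Corollary \ref{corres} together with the abelianness of $\funfp{A}$ inside $\fun{A}$.
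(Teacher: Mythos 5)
Your proposal is correct and follows essentially the same route as the paper: transfer the statement through the correspondence of Corollary \ref{corres}, identify the lattice operations on ideal torsion pairs with pointwise intersection and sum of the corresponding subfunctors of $\mathbbm{1}_A$ (as in Remark \ref{latticecor}), and conclude from the fact that $\funfp{A}$ is abelian inside $\fun{A}$, so that finitely presented subfunctors of $\mathbbm{1}_A$ form a sublattice. The paper states this closure more tersely, while you spell out the kernel/image arguments explicitly; the content is the same.
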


\begin{proof} Since $\funfp{A}$ is an abelian subcategory of $\fun{A}$, it follows that the collection of finitely presented subfunctors of $\mathbbm{1}_A$ is a sublattice of the lattice of subfunctors of $\mathbbm{1}_A$. Hence, by Corollary \ref{corres}, the lattice of functorially finite ideal torsion pairs is a sublattice of the lattice of ideal torsion pairs. Now the claim follows by Remark \ref{latticecor}.
\end{proof}

We introduce a monoidal structure on pairs $s \leq t$ of subfunctor of $\mathbbm{1}_A$ as a method for producing new ideal torsion pairs via the correspondence in Proposition \ref{subfun}. Let $s\leq t$ and $s' \leq t'$ be subfunctors of $\mathbbm{1}_A$. Then the composition $t (t'/s') $ is a subfunctor of $t'/s'$ and thus isomorphic to $t''/s'$ for some $t''\leq t'$. Similarly, the composition $s(t'/s')$ is a subfunctor of $t'/s'$ and thus isomorphic to $s''/s'$ for some $s'' \leq t'$. Hence,  
\begin{align*}
    (t/s)(t'/s') = (t''/s')/(s''/s') \cong t''/s''.
\end{align*}
In this way, composing factors $t/s$ of subfunctors $s\leq t$ of $\mathbbm{1}_A$ yields a monoidal structure with the neutral element $\mathbbm{1}_A$. The following lemma shows that the monoidal structure restricts to finitely presented functors.

\begin{lem}\label{rest} Let $F\colon \modA \rightarrow \modA$ be an additive finitely presented functor and $G\in \funfp{A}$. Then $GF \in \funfp{A}$. 
\end{lem}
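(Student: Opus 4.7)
The plan is to split the argument into two steps. First, I will show that $\Hom(M, F(-)) \in \funfp{A}$ for every $M \in \modA$. Second, given $G \in \funfp{A}$, a presentation of $G$ will exhibit $GF$ as the cokernel of a morphism between two functors of the above form, hence as finitely presented.

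For the first step, since $A$ is Artin every $M \in \modA$ admits a presentation $A^m \xrightarrow{\alpha} A^n \to M \to 0$. Applying $\Hom(-, F(X))$ and using the natural isomorphism $\Hom(A^k, -) \cong \mathbbm{1}_A^k$, I obtain the left exact sequence
\begin{align*}
    0 \to \Hom(M, F(X)) \to F(X)^n \to F(X)^m
\end{align*}
pointwise in $X$. The right-hand morphism is multiplication by the matrix of $\alpha$ with entries in $A$, and assembling over all $X$ gives a morphism $(UF)^n \to (UF)^m$ in $\fun{A}$, so that $\Hom(M, F(-))$ is the kernel of this morphism. Since $UF \in \funfp{A}$ by hypothesis, and $\funfp{A}$ is closed under finite direct sums and kernels in $\fun{A}$, I will conclude $\Hom(M, F(-)) \in \funfp{A}$.

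For the second step, I pick a presentation $\Hom(N', -) \to \Hom(M', -) \to G \to 0$ in $\funfp{A}$. Exactness in $\fun{A}$ is pointwise, so substituting $F(X)$ for the argument yields
\begin{align*}
    \Hom(N', F(-)) \to \Hom(M', F(-)) \to GF \to 0
\end{align*}
exact in $\fun{A}$. By the first step, the two leftmost terms lie in $\funfp{A}$, and since $\funfp{A}$ is closed under cokernels in $\fun{A}$, the functor $GF$ is finitely presented.

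The main technical point is the naturality in $X$ of the morphism $(UF)^n \to (UF)^m$ coming from the matrix of $\alpha$; this is where I expect to use crucially that $F$ takes values in $\modA$ rather than merely in $\Ab$, so that each $F(f)$ is $A$-linear and therefore commutes with left multiplication by the entries of $\alpha$ appearing in the matrix. Once that verification is in place, the remainder is a routine application of the abelian-subcategory structure of $\funfp{A}$ inside $\fun{A}$.
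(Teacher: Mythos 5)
Your proof is correct, but it takes a genuinely different route from the one in the paper. You stay entirely inside the small functor category: a free presentation $A^m \xrightarrow{\alpha} A^n \to M \to 0$ exhibits $\Hom(M,F(-))$ as the kernel of a morphism $(UF)^n \to (UF)^m$ in $\fun{A}$, so it is finitely presented because $\funfp{A}$ is closed under finite direct sums and kernels; then a projective presentation of $G$, evaluated at $F(X)$, exhibits $GF$ as a cokernel of a morphism $\Hom(N',F(-)) \to \Hom(M',F(-))$ between such functors, and closure of $\funfp{A}$ under cokernels finishes the argument. The technical point you flag is indeed the right one and it does go through: the hypothesis that $F$ lands in $\modA$ (so each $F(f)$ is $A$-linear) is exactly what makes $X \mapsto \Hom_A(M,F(X))$ a functor and makes the matrix of $\alpha$, acting through the $A$-module structure on $F(X)$, assemble into a natural transformation $(UF)^n \to (UF)^m$. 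The paper argues differently: it extends $F$ and $G$ to functors $\overline{F}, \overline{G}$ on $\textnormal{Mod}\,A$ commuting with direct limits, invokes Krause's characterization of finitely presented functors as those whose extension also commutes with products, observes that $\overline{G}\,\overline{F}$ is the extension of $GF$ and commutes with both, and concludes. Your argument is more elementary and self-contained, using only the closure properties of $\funfp{A}$ already recorded in Section 1 and no passage to the big module category or external results; the paper's argument is shorter once Krause's theorem is available and sits naturally in the framework of functors on $\textnormal{Mod}\,A$ that is used elsewhere in that reference.
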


\begin{proof}
    We can uniquely extend $F$ and $G$ to functors $\overline{F}\colon \textnormal{Mod}\,A \rightarrow \textnormal{Mod}\,A$ and $\overline{G}\colon \textnormal{Mod}\,A \rightarrow \Ab$ such that $\overline{F}$ and $\overline{G}$ commute with direct limits and $\overline{F}, \overline{G}$ coincide with $F,G$ on $\modA$ respectively as follows. For $M\in \ModA$ we have $M = \varinjlim M_i$ for $M_i \in \modA$ and we set 
    \begin{align*}
        \overline{F}(M) = \varinjlim F(M_i), \qquad \overline{G}(M) = \varinjlim G(M_i).
    \end{align*}
   Then $\overline{F}$ and $\overline{G}$ have the desired properties (see for example \cite{Prest2}). Because $F$ and $G$ are finitely presented, it follows from \cite[Theorem 9,1]{Krause} that $\overline{F}$ and $\overline{G}$ commute with products. Now $\overline{G}\,\overline{F}$ is the unique extension, in the above sense, of $GF$ as the functors coincide on $\modA$ and $\overline{G}\,\overline{F}$ commutes with direct limits, since $\overline{F}$ and $\overline{G}$ commute with direct limits. Further, $\overline{G}\,\overline{F}$ commutes with products, since so do $\overline{G}$ and $\overline{F}$. By \cite[Theorem 9,1]{Krause} it follows that $GF$ is finitely presented.
\end{proof}

We continue by investigating special cases of the monoidal structure on pairs $s\leq t$ of subfunctors of $\mathbbm{1}_A$, namely $s = 0$ and arbitrary $t$, as well as $t = \mathbbm{1}_A$ and arbitrary $s$. In particular, we are interested in the corresponding ideal torsion pairs. To describe them, we introduce two operations on ideals $\mathcal{I}$ and $\mathcal{I}'$. First, we denote by $\mathcal{I}' \mathcal{I}$ the collection all morphisms $\varphi'\varphi$ with $\varphi \in \mathcal{I}$ and $\varphi \in \mathcal{I}'$ (if the composition is defined). It is easy to check that $\mathcal{I}' \mathcal{I}$ is again an ideal. For the other operation, we say that a morphism $f \colon X \rightarrow Y$ is an \emph{extension} of a morphism $\varphi'$ by a morphism $\varphi$ if there exists a commuative diagram of morphisms
\begin{equation*}
    \begin{tikzcd}
       & & X \arrow[d, "h", swap] \arrow[dr, "\varphi' "] & & \\
       0 \arrow[r] & L \arrow[dr, "\varphi", swap] \arrow[r] & M \arrow[d, "g"] \arrow[r] & N \arrow[r] &0 \\
        & & Y & &     
    \end{tikzcd}
\end{equation*}
with $f = g h$, where the horizontal sequence is exact. Now we denote by $\mathcal{I} \diamond \mathcal{I}'$ the collection of all morphisms $f$ that are an extension of a morphism $\varphi' \in \mathcal{I}'$ by a morphism $\varphi \in \mathcal{I}$.

The above definition of extensions of morphisms were introduced in \cite{Fu2}. The following result was shown by the first three authors in \cite{Zhu}.

\begin{prop}\label{scase} Let $(\mathcal{I}, \mathcal{J})$, $(\mathcal{I}', \mathcal{J}')$ be ideal torsion pairs in $\modA$ and $t, t'$ the corresponding subfunctors of $\mathbbm{1}_A$.
\begin{itemize}
    \item[\rm (a)] The functor $t t'$ corresponds to the ideal torsion pair $(\mathcal{I}'\mathcal{I}, \mathcal{J} \diamond \mathcal{J}' )$.
    \item[\rm (b)] The functor $t''$, defined by $\mathbbm{1}_A /t'' = (\mathbbm{1}_A/t)(\mathbbm{1}_A/t')$, corresponds to the ideal torsion pair $(\mathcal{I} \diamond \mathcal{I}', \mathcal{J}' \mathcal{J})$.
\end{itemize}
\end{prop}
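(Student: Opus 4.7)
The plan is to verify both parts directly via the bijection in Proposition \ref{subfun}, checking that the torsion and torsion-free ideals of the composite subfunctor match the stated products and extensions of ideals.

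For (a), I first show that $\mathcal{I}'\mathcal{I}$ equals the torsion ideal $\{\varphi\colon L \to M \mid \im\varphi \subseteq t(t'M)\}$ of $tt'$. Given a composition $\varphi'\varphi$ with $\varphi \in \mathcal{I}$ and $\varphi'\colon Y \to Z$ in $\mathcal{I}'$, the morphism $\varphi'$ factors through the inclusion $t'Z \hookrightarrow Z$, and applying functoriality of $t$ to this restriction sends $\im\varphi \subseteq tY$ into $t(t'Z)$. Conversely, any $\varphi\colon L \to M$ with $\im\varphi \subseteq t(t'M)$ factors as $L \to t'M \hookrightarrow M$, with outer inclusion in $\mathcal{I}'$ and inner arrow in $\mathcal{I}$. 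Next I verify that $\mathcal{J}\diamond\mathcal{J}'$ equals the torsion-free ideal of $tt'$. For one direction, given an extension $f = gh$ along a short exact sequence $0 \to L \to M \xrightarrow{\pi} N \to 0$ with $\pi h \in \mathcal{J}'$ and $g|_L \in \mathcal{J}$, the condition $\pi h(t'X) = 0$ forces $h(t'X) \subseteq L$, and functoriality of $t$ then gives $h(t(t'X)) \subseteq t(L) \subseteq \ker(g|_L)$, so $f$ kills $t(t'X)$. For the other direction, any $f\colon X \to Y$ killing $t(t'X)$ is realized as such an extension along the canonical short exact sequence $0 \to t'X \to X \to X/t'X \to 0$, taking $h = \id_X$, $g = f$, $\varphi'$ the $\mathcal{J}'$-quotient $X \to X/t'X$, and $\varphi = f|_{t'X}$, which lies in $\mathcal{J}$ because $t(t'X) \subseteq t'X \cap \ker f$.

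For (b), I would reduce to (a) via the duality of Lemma \ref{dual}. The dualized pairs $(D\mathcal{J}, D\mathcal{I})$ and $(D\mathcal{J}', D\mathcal{I}')$ in $\mod A^{\op}$ correspond to the subfunctors $s = D(\mathbbm{1}_A/t)D$ and $s' = D(\mathbbm{1}_A/t')D$ of $\mathbbm{1}_{A^{\op}}$. Applying (a) in $\mod A^{\op}$ identifies the ideal torsion pair attached to $s s'$, and since $s s' = D(\mathbbm{1}_A/t)(\mathbbm{1}_A/t')D = D(\mathbbm{1}_A/t'')D$, this pair is by Lemma \ref{dual} the dual of the pair corresponding to $t''$. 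Translating back through $D$ yields the desired $\mathcal{I}''' = \mathcal{I}\diamond\mathcal{I}'$ and $\mathcal{J}''' = \mathcal{J}'\mathcal{J}$. A direct parallel to (a) is also available, using the monoidal factorization of the canonical quotient $X \to X/t''X$ as $X \to X/t'X \to (X/t'X)/t(X/t'X)$, in which the first map lies in $\mathcal{J}'$ and the second in $\mathcal{J}$, to realize any $\psi \in \mathcal{J}'''$ as a composite of the required form.

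The main obstacle is careful bookkeeping of orderings under the contravariant duality $D$: composition reverses, the $\diamond$-operation interchanges its two arguments upon dualization, and the torsion and torsion-free halves are swapped by Lemma \ref{dual}. The proof requires verifying that these three reversals combine correctly so that the torsion-ideal product $\mathcal{I}'\mathcal{I}$ of (a) translates to the torsion-free-ideal product $\mathcal{J}'\mathcal{J}$ of (b), and the extension $\mathcal{J}\diamond\mathcal{J}'$ of (a) translates to the extension $\mathcal{I}\diamond\mathcal{I}'$ of (b).
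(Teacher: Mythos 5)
Your part (a) is correct and is essentially the paper's own argument: you identify the torsion ideal of $tt'$ with $\mathcal{I}'\mathcal{I}$ exactly as the paper does, and you check the torsion-free side directly against the description $\{\psi \mid tt'X \subseteq \ker \psi\}$ from Proposition \ref{subfun} rather than computing $(\mathcal{I}'\mathcal{I})^\perp$; that difference is immaterial.

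The gap is in (b), and it sits exactly at the step you defer. The paper proves (b) by mirroring (a) directly; your reduction via Lemma \ref{dual} is a legitimate alternative, but the ``bookkeeping of orderings'' you postpone is the entire content, and when it is carried out it does not give the ideals you assert. Applying (a) in $\mod A^{\op}$ to $(D\mathcal{J},D\mathcal{I})\leftrightarrow s$ and $(D\mathcal{J}',D\mathcal{I}')\leftrightarrow s'$ shows that $ss'$ corresponds to $\bigl((D\mathcal{J}')(D\mathcal{J}),\,(D\mathcal{I})\diamond(D\mathcal{I}')\bigr)$; since $D$ reverses composition and interchanges the two arguments of $\diamond$, translating back yields $(\mathcal{I}'\diamond\mathcal{I},\,\mathcal{J}\mathcal{J}')$, i.e.\ with the primed and unprimed ideals in the opposite order from the $(\mathcal{I}\diamond\mathcal{I}',\mathcal{J}'\mathcal{J})$ you claim to obtain. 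Your own direct sketch shows the same tension: the factorization $X\to X/t'X\to (X/t'X)/t(X/t'X)$ has its first factor in $\mathcal{J}'$ and its second in $\mathcal{J}$, so with the convention used in the paper and in the proof of (a) (where $\mathcal{I}'\mathcal{I}$ consists of composites $\varphi'\varphi$ with $\varphi\in\mathcal{I}$ applied first) this composite lies in $\mathcal{J}\mathcal{J}'$, not in $\mathcal{J}'\mathcal{J}$. The two orders genuinely differ: for the algebra of upper triangular $2\times 2$ matrices, with indecomposables $S_1,S_2,P$ and $0\to S_2\to P\to S_1\to 0$, take $t,t'$ to be the torsion radicals of the torsion classes $\add S_1$ and $\add S_2$. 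Then $t''=\mathbbm{1}_A$, so $1_P$ lies in the torsion ideal of $t''$; it lies in $\mathcal{I}'\diamond\mathcal{I}$ via the displayed sequence, but not in $\mathcal{I}\diamond\mathcal{I}'$, since there the sub-part must have image in $tP=0$ and the quotient-part must factor through $\add S_2$ while $\Hom(P,S_2)=0$, forcing $1_P=0$. So either your translation through $D$ is mistaken, or you are in fact proving the statement with the two factors transposed (which, with the printed definitions, is the version that actually holds); in either case you cannot simply assert that the dualization ``yields the desired'' pair --- the order-tracking must be executed and the discrepancy with the stated formula resolved.
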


\begin{proof} (a) For $M\in \modA$ the inclusion $t'M \rightarrow M$ is in $\mathcal{I}'$ and the inclusion $tt'M \rightarrow t' M$ is in $\mathcal{I}$. Thus, the inclusion $tt'M \rightarrow M$ is in $\mathcal{I}' \mathcal{I}$. Now an arbitrary morphism in $\mathcal{I}' \mathcal{I}$ factors as $\varphi' \varphi$ for $\varphi \colon L \rightarrow N$ in $\mathcal{I}$ and $\varphi' \colon N \rightarrow M$ in $\mathcal{I}'$. Further, $\varphi'$ factors as $N\rightarrow t'M \rightarrow M$ and the composition $L \rightarrow N \rightarrow t'M$ factors as $L \rightarrow t t'M \rightarrow t'M$. We conclude that $\im \varphi' \varphi \subseteq t t'M$ and the torsion ideal corresponding to $tt'$ must be equal to $\mathcal{I}' \mathcal{I}$. It is left to show that $(\mathcal{I}' \mathcal{I})^\perp = \mathcal{J} \diamond \mathcal{J}'$.

For $f\colon X \rightarrow Y$ in $\mathcal{J} \diamond \mathcal{J}'$ there exists a commutative diagram of morphisms
\begin{equation*}
    \begin{tikzcd}
       & & X \arrow[d, "h", swap] \arrow[dr, "\psi' "] & & \\
       0 \arrow[r] & L \arrow[dr, "\psi", swap] \arrow[r, "\iota"] & M \arrow[d, "g"] \arrow[r, "\pi", swap] & N \arrow[r] &0 \\
        & & Y & &     
    \end{tikzcd}
\end{equation*}
with $f = gh$ and $\psi \in \mathcal{J}, \psi' \in \mathcal{J}'$, where the horizontal sequence is exact. Now for $\varphi\in \mathcal{I},\varphi' \in \mathcal{I}'$ we have $\pi h \varphi' = \psi' \varphi' = 0$. Hence,  $h \varphi'$ must factor through $\iota$, that is $h \varphi' = \iota \alpha$. Thus,   $f \varphi' \varphi = gh \varphi' \varphi = g \iota \alpha \varphi = \psi \alpha \varphi = 0$ and so $f \in (\mathcal{I}' \mathcal{I})^\perp$.

Let $f\colon X \rightarrow Y$ in $(\mathcal{I}' \mathcal{I})^\perp$ and consider the commutative diagram  
\begin{equation*}
    \begin{tikzcd}
       & & X \arrow[d, equals] \arrow[dr] & & \\
       0 \arrow[r] & t'X \arrow[dr] \arrow[r] & X \arrow[d] \arrow[r] & X/t'X \arrow[r] &0 \\
        & & X/tt'X. & &     
    \end{tikzcd}
\end{equation*}
Since $t' X \rightarrow t'X/tt'X$ is contained in $\mathcal{J}$ and $X\rightarrow X/t'X$ in $\mathcal{J}'$, we conclude that $X \rightarrow X/tt'X$ is contained in $\mathcal{J}\diamond \mathcal{J}'$. Because $tt'X \rightarrow X$ is contained in $\mathcal{I}' \mathcal{I}$, the morphisms $f$ factors through $X \rightarrow X/tt'X$. Thus,   also $f\in \mathcal{J} \diamond \mathcal{J}'$.

(b) Similar to (a).
\end{proof}

\begin{coro}\label{ext} If $(\mathcal{I}, \mathcal{J})$ and $(\mathcal{I}', \mathcal{J}')$ are functorially finite ideal torsion pairs, then so are $(\mathcal{I}'\mathcal{I}, \mathcal{J} \diamond \mathcal{J}' )$ and $(\mathcal{I} \diamond \mathcal{I}', \mathcal{J}' \mathcal{J})$.
\end{coro}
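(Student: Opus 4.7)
The plan is to translate the claim into a statement about finitely presented subfunctors of $\mathbbm{1}_A$ using Corollary \ref{corres}, and then to argue that the monoidal operations on subfunctors preserve finite presentation, appealing to Lemma \ref{rest} together with the fact that $\funfp{A}$ is closed under kernels and cokernels in $\fun{A}$.

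First I would invoke Proposition \ref{scase} to identify the relevant subfunctors of $\mathbbm{1}_A$: the ideal torsion pair $(\mathcal{I}'\mathcal{I}, \mathcal{J} \diamond \mathcal{J}')$ corresponds to $tt'$, and $(\mathcal{I} \diamond \mathcal{I}', \mathcal{J}'\mathcal{J})$ corresponds to the subfunctor $t''$ with $\mathbbm{1}_A/t'' = (\mathbbm{1}_A/t)(\mathbbm{1}_A/t')$. By Corollary \ref{corres}, the hypothesis that $(\mathcal{I},\mathcal{J})$ and $(\mathcal{I}',\mathcal{J}')$ are functorially finite means exactly that $t$ and $t'$ are finitely presented as subfunctors of $\mathbbm{1}_A$, and we must show that $tt'$ and $t''$ are finitely presented as well.

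For $tt'$, I would view $t'$ as an additive finitely presented functor $\modA \to \modA$ and $t$ as a finitely presented functor $\modA \to \Ab$ (by composing with the forgetful functor), so that $tt'$ is precisely the composition considered in Lemma \ref{rest}. Hence $tt' \in \funfp{A}$ directly. For $t''$, I would first observe that since $\mathbbm{1}_A \cong \Hom(A,-)$ is finitely presented and $\funfp{A}$ is closed under cokernels in $\fun{A}$, both $\mathbbm{1}_A/t$ and $\mathbbm{1}_A/t'$ lie in $\funfp{A}$. Applying Lemma \ref{rest} once more, with $F = \mathbbm{1}_A/t'$ and $G = \mathbbm{1}_A/t$, yields that $(\mathbbm{1}_A/t)(\mathbbm{1}_A/t') = \mathbbm{1}_A/t''$ is finitely presented. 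From the short exact sequence
\begin{align*}
0 \longrightarrow t'' \longrightarrow \mathbbm{1}_A \longrightarrow \mathbbm{1}_A/t'' \longrightarrow 0
\end{align*}
and the closure of $\funfp{A}$ under kernels in $\fun{A}$, we conclude that $t''$ is also finitely presented.

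There is no real obstacle here beyond correctly interpreting the monoidal structure on pairs $s \leq t$: the operation $tt'$ is literal composition of functors (so Lemma \ref{rest} applies verbatim), while the product $(\mathbbm{1}_A/t)(\mathbbm{1}_A/t')$ is again a composition of functors $\modA \to \modA$ by construction, so Lemma \ref{rest} applies to it too. Once this is spelled out, Corollary \ref{corres} converts finite presentation of the subfunctors back into functorial finiteness of the two ideal torsion pairs.
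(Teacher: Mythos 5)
Your proposal is correct and follows essentially the same route as the paper: pass to the corresponding subfunctors via Corollary \ref{corres}, identify $tt'$ and $t''$ via Proposition \ref{scase}, and apply Lemma \ref{rest} to conclude finite presentation. The only difference is that you spell out explicitly the step from $\mathbbm{1}_A/t''$ being finitely presented to $t''$ being finitely presented (via closure of $\funfp{A}$ under kernels), which the paper leaves implicit, having already noted this equivalence in the proof of Theorem \ref{ffiff}.
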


\begin{proof} Let $t$ and $t'$ be the subfunctors of $\mathbbm{1}_A$ corresponding to the ideal torsion pairs $(\mathcal{I}, \mathcal{J})$ and $(\mathcal{I}', \mathcal{J}')$. By Corollary \ref{corres} the functors are finitely presented. Now $tt'$ and $t''$, defined by $\mathbbm{1}_A /t'' = (\mathbbm{1}_A/t)(\mathbbm{1}_A/t')$, correspond to $(\mathcal{I}'\mathcal{I}, \mathcal{J} \diamond \mathcal{J}' )$ and $(\mathcal{I} \diamond \mathcal{I}', \mathcal{J}' \mathcal{J})$ respectively by Proposition \ref{scase}. By Lemma \ref{rest} the functors $tt'$ and $t''$ are finitely presented. Hence,  the corresponding ideal torsion pairs are functorially finite.
\end{proof}

For later purposes, the above result will be important for producing new functorially finite ideal torsion pairs. In particular, the involved ideals will be related to subcategories of $\modA$. We check that the notion of extensions of morphisms behaves well with the notion of extensions of modules. For full additive subcategories $\mathcal{C}, \mathcal{C}'$ of $\modA$, we denote by $\mathcal{C} \diamond \mathcal{C}'$ the collection of all modules $M\in \modA$ such that there exists a short exact sequence
\begin{align*}
    0 \longrightarrow L \longrightarrow M \longrightarrow N \longrightarrow 0
\end{align*}
with $L \in \mathcal{C}$ and $N \in \mathcal{C}'$.

\begin{lem}\label{objext} Let $\mathcal{C}$ and $\mathcal{C}'$ be full additive subcategories of $\modA$. Then 
\begin{align*}
\langle \mathcal{C} \rangle \diamond \langle \mathcal{C}' \rangle = \langle \mathcal{C} \diamond \mathcal{C}' \rangle.    
\end{align*}
\end{lem}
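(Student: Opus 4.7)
The plan is to prove both inclusions, with the nontrivial direction being $\langle \mathcal{C} \rangle \diamond \langle \mathcal{C}' \rangle \subseteq \langle \mathcal{C} \diamond \mathcal{C}' \rangle$, which requires a pushout–pullback construction.

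For the inclusion $\langle \mathcal{C} \diamond \mathcal{C}' \rangle \subseteq \langle \mathcal{C} \rangle \diamond \langle \mathcal{C}' \rangle$, I would start with a morphism $f \colon X \to Y$ factoring as $f = g h$ through some $M \in \mathcal{C} \diamond \mathcal{C}'$, and pick a short exact sequence $0 \to L \xrightarrow{\iota} M \xrightarrow{\pi} N \to 0$ with $L \in \mathcal{C}$ and $N \in \mathcal{C}'$. Plugging this data directly into the defining diagram for $\langle \mathcal{C} \rangle \diamond \langle \mathcal{C}' \rangle$ with $\varphi := g \iota \colon L \to Y$ and $\varphi' := \pi h \colon X \to N$, one sees that $\varphi$ factors through $L \in \mathcal{C}$ and $\varphi'$ factors through $N \in \mathcal{C}'$, so $\varphi \in \langle \mathcal{C} \rangle$ and $\varphi' \in \langle \mathcal{C}' \rangle$. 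This direction is immediate once the diagram is written down.

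For the converse inclusion, I would start from a morphism $f = g h$ fitting into the extension diagram with horizontal sequence $0 \to L \xrightarrow{\iota} M \xrightarrow{\pi} N \to 0$ and with $\varphi = g \iota \in \langle \mathcal{C} \rangle$ and $\varphi' = \pi h \in \langle \mathcal{C}' \rangle$. Fix factorizations $\varphi = b_1 a_1$ through some $C \in \mathcal{C}$ and $\varphi' = b_2 a_2$ through some $C' \in \mathcal{C}'$. The idea is to replace $M$ by a module in $\mathcal{C} \diamond \mathcal{C}'$ through which $f$ still factors. First I would form the pushout $Q$ of $\iota \colon L \to M$ along $a_1 \colon L \to C$: since $\iota$ is monic, the induced sequence $0 \to C \to Q \to N \to 0$ is short exact. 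Then I would pull this back along $b_2 \colon C' \to N$ to obtain a module $R$ sitting in a short exact sequence $0 \to C \to R \to C' \to 0$, so $R \in \mathcal{C} \diamond \mathcal{C}'$.

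Finally, I would produce a factorization $f = \tilde g \tilde h$ through $R$. The map $\tilde h \colon X \to R$ is obtained from the pullback universal property using the two compatible maps $X \xrightarrow{h} M \to Q$ and $a_2 \colon X \to C'$, whose compatibility is precisely the identity $\pi h = \varphi' = b_2 a_2$. The map $R \to Y$ is obtained by first using the pushout universal property to construct $\bar g \colon Q \to Y$ from $g \colon M \to Y$ and $b_1 \colon C \to Y$, whose compatibility is the identity $g \iota = \varphi = b_1 a_1$, and then composing with the pullback projection $R \to Q$. A routine diagram chase then verifies $\tilde g \tilde h = f$, so $f \in \langle \mathcal{C} \diamond \mathcal{C}' \rangle$. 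The main obstacle is organizing the double universal-property step cleanly; the core insight is that the factorizations $\varphi = b_1 a_1$ and $\varphi' = b_2 a_2$ are exactly the compatibility data needed to apply the pushout and pullback on the two ends of the defining short exact sequence for $M$.
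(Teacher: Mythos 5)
Your proposal is correct and takes essentially the same route as the paper: the easy inclusion is handled identically, and for the reverse inclusion the paper performs the same pushout--pullback modification of the middle short exact sequence, only in the opposite order (first the pullback along the factorization of $\varphi'$ through $C'\in\mathcal{C}'$, then the pushout along the factorization of $\varphi$ through $C\in\mathcal{C}$). Swapping the order of these two universal-property steps makes no substantive difference, so this is the paper's argument in mirror image.
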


\begin{proof} Let $f \colon X \rightarrow Y$ be in $\langle \mathcal{C} \diamond \mathcal{C}' \rangle$. Then $f$ factors as $gh$ with $g$ starting in $M \in \mathcal{C} \diamond \mathcal{C}'$. Hence,  there exists a short exact sequence
\begin{align*}
    0 \longrightarrow L \longrightarrow M \longrightarrow N \longrightarrow 0
\end{align*}
with $L\in \mathcal{C}$ and $N\in \mathcal{C}'$. We extend this to a commutative diagram
\begin{equation*}
    \begin{tikzcd}
       & & X \arrow[d, "h", swap] \arrow[dr, "\varphi' "] & & \\
       0 \arrow[r] & L \arrow[dr, "\varphi", swap] \arrow[r] & M \arrow[d, "g"] \arrow[r] & N \arrow[r] &0 \\
        & & Y & &     
    \end{tikzcd}
\end{equation*}
with $\varphi \in \langle \mathcal{C} \rangle$ and $\varphi' \in \langle \mathcal{C}' \rangle$. Thus,   $f \in \langle \mathcal{C} \rangle \diamond \langle \mathcal{C}' \rangle$.

Let $f\colon X\rightarrow Y$ be in $\langle \mathcal{C} \rangle \diamond \langle \mathcal{C}' \rangle$. Then there exists a commutative diagram
\begin{equation*}
    \begin{tikzcd}
       & & X \arrow[d, "h", swap] \arrow[dr, "\varphi' "] & & \\
       0 \arrow[r] & L \arrow[dr, "\varphi", swap] \arrow[r, "\iota"] & M \arrow[d, "g"] \arrow[r, "\pi"] & N \arrow[r] &0 \\
        & & Y & &     
    \end{tikzcd}
\end{equation*}
with exact middle row, $f = gh$ and $\varphi \in \langle \mathcal{C} \rangle$, $\varphi' \in \langle \mathcal{C}' \rangle$. In particular $\pi h = \varphi'$ factors through some $C'\in \mathcal{C}'$ and taking a suitable pullback $P$, we obtain a commutative diagram
\begin{equation*}
    \begin{tikzcd}
    & & X \arrow[d, "\alpha"] \\
       0 \arrow[r] & L \arrow[d, equals] \arrow[r, "\beta"] & P \arrow[d, "h'"] \arrow[r] & C' \arrow[d] \arrow[r] & 0 \\
       0 \arrow[r] & L  \arrow[r, "\iota"] & M \arrow[r, "\pi"] & N \arrow[r] &0 
    \end{tikzcd}
\end{equation*}
with $h = h' \alpha$. Now $gh' \beta = g\iota = \varphi$ factors through some $C\in \mathcal{C}$. Taking a suitable pushout $Q$, we obtain a commutative diagram
\begin{equation*}
    \begin{tikzcd}
       0 \arrow[r] & L \arrow[d] \arrow[r, "\beta"] & P \arrow[d, "g'"] \arrow[r] & C' \arrow[d, equals] \arrow[r] & 0 \\
       0 \arrow[r] & C  \arrow[r] & Q \arrow[d, "\gamma"] \arrow[r] & C' \arrow[r] &0 \\
        & &  M & &
    \end{tikzcd}
\end{equation*}
with $\gamma g' = gh'$. Hence,  $f = gh = g h' \alpha = \gamma g' h' \alpha$ factors through $Q \in \mathcal{C} \diamond \mathcal{C}'$. We conclude that $\langle \mathcal{C} \rangle \diamond \langle \mathcal{C}' \rangle = \langle \mathcal{C} \diamond \mathcal{C}' \rangle.$
\end{proof}

\section{Ideals determined by objects}

Motivated by Auslander's concept of morphisms determined by objects (see \cite{Auslander1} and \cite{Auslander2}), we introduce the notion of ideals determined by objects. This will offer a different approach to torsion ideals and torsion-free ideals.

Let $\mathcal{I}$ be an ideal of $\modA$ and $C\in \modA$. Then $\mathcal{I}$ is \emph{right $C$-determined} if $\varphi f \in \mathcal{I}$ for all $f$ starting in $C$ (such that the composition is defined) already implies $\varphi \in \mathcal{I}$. Dually $\mathcal{I}$ is \emph{left $C$-determined} if $f \varphi \in \mathcal{I}$ for all $f$ ending in $C$ already implies $\varphi \in \mathcal{I}$.

\begin{prop}\label{adet} \begin{itemize}
    \item[\rm (a)] An ideal $\mathcal{I}$ is a torsion ideal if and only if $\mathcal{I}$ is right $A$-determined.
    \item[\rm (b)] An ideal $\mathcal{J}$ is a torsion-free ideal if and only if $\mathcal{J}$ is left $D A^\op$-determined.
\end{itemize}
\end{prop}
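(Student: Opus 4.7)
The plan is to combine Lemma~\ref{charai} with two classical facts: ${}_AA$ is a finitely generated projective generator of $\modA$, while $DA^{\op}$ is a finitely generated injective cogenerator of $\modA$. These are exactly the features that allow one to replace the universally quantified families of test morphisms in the definitions of ``right $A$-determined'' and ``left $DA^{\op}$-determined'' by a single epi, respectively mono, to which Lemma~\ref{charai} applies.

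For part~(a), I first show that a torsion ideal $\mathcal{I}$ is right $A$-determined. Assuming $\varphi\colon M\to N$ satisfies $\varphi f\in\mathcal{I}$ for every $f\colon A\to M$, I choose finitely many $f_1,\dots,f_n\colon A\to M$ whose images generate $M$; they assemble into an epimorphism $\pi=(f_1,\dots,f_n)\colon A^n\to M$, and since $\mathcal{I}$ is closed under finite sums, $\varphi\pi\in\mathcal{I}$. Lemma~\ref{charai}(a) then forces $\varphi\in\mathcal{I}$. For the converse, I assume $\mathcal{I}$ is right $A$-determined and that $\varphi f\in\mathcal{I}$ for some epimorphism $f\colon L\to M$. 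Given any $g\colon A\to M$, the projectivity of $A$ yields a lift $g'\colon A\to L$ with $fg'=g$, and hence $\varphi g=(\varphi f)g'\in\mathcal{I}$. Right $A$-determinedness now gives $\varphi\in\mathcal{I}$, and one more appeal to Lemma~\ref{charai}(a) concludes that $\mathcal{I}$ is a torsion ideal.

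Part~(b) I would prove by the exact dual argument, using $DA^{\op}$ in place of $A$. For the forward direction, given a torsion-free ideal $\mathcal{J}$ and $\psi\colon M\to N$ with $f\psi\in\mathcal{J}$ for every $f\colon N\to DA^{\op}$, the cogenerator property provides finitely many such $f_i$ assembling into a monomorphism $\iota=(f_1,\dots,f_n)\colon N\hookrightarrow (DA^{\op})^n$; writing $\iota$ as a sum via the coordinate inclusions shows $\iota\psi\in\mathcal{J}$, and Lemma~\ref{charai}(b) yields $\psi\in\mathcal{J}$. Conversely, if $g\psi\in\mathcal{J}$ with $g\colon N\to N'$ monic, then by injectivity of $DA^{\op}$ any $f\colon N\to DA^{\op}$ extends to some $f'\colon N'\to DA^{\op}$ with $f'g=f$, so $f\psi=f'(g\psi)\in\mathcal{J}$; left $DA^{\op}$-determinedness then forces $\psi\in\mathcal{J}$.

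There is no real obstacle here. The argument is the expected pairing of Lemma~\ref{charai} with the universal properties of the projective generator ${}_AA$ and the injective cogenerator $DA^{\op}$; the only mild point to check is that finite generation of modules in $\modA$ allows the a priori infinite family of test morphisms to be replaced by a single epi or mono, and this is what makes the closure conditions of Lemma~\ref{charai} directly applicable.
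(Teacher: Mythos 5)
Your proof is correct and follows essentially the same route as the paper: Lemma~\ref{charai} combined with the projective generator property of ${}_AA$ (lifting along epimorphisms, assembling component maps $A\to M$ into an epi $A^n\to M$) and, dually, the injective cogenerator property of $DA^{\op}$ for part~(b), which the paper leaves as ``similar to~(a)''.
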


\begin{proof} (a) Let $\mathcal{I}$ be a torsion ideal and $\varphi$ a morphism such that $\varphi f \in \mathcal{I}$ for all $f$ starting in $A$. We can choose $f$ as an epimorphism starting in $A^n$ such that $\varphi f\in \mathcal{I}$. By Lemma \ref{charai} it follows that $\varphi \in \mathcal{I}$. Hence,  $\mathcal{I}$ is right $A$-determined. 

Let $\mathcal{I}$ be right $A$-determined and $\varphi$ a morphism such that $\varphi f \in \mathcal{I}$ for an epimorphism $f$. Then for an arbitrary morphism $f'$ starting in $A$ (such that $\varphi f'$ is defined), $f'$ factors through the epimorphism $f$. Thus,   $f' \varphi \in \mathcal{I}$ and because $\mathcal{I}$ is right $A$-determined, we conclude that $\varphi \in \mathcal{I}$. By Lemma \ref{charai} it follows that $\mathcal{I}$ is a torsion ideal.

(b) Similar to (a).
\end{proof}

\begin{rem}\label{smallt} \rm For an ideal $\mathcal{I}$ of $\modA$ let $\textnormal{I}(\mathcal{I})$ denote the smallest torsion ideal containing $\mathcal{I}$. In Section 2 we have indirectly seen two ways to describe $\textnormal{I}(\mathcal{I})$. Namely $\textnormal{I}(\mathcal{I}) = {}^\perp (\mathcal{I}^\perp)$ by Proposition \ref{ortho} and $\textnormal{I}(\mathcal{I})$ equals the collection of all morphisms $\varphi$ such that $\varphi f \in \mathcal{I}$ for all epimorphisms $f$ by Lemma \ref{charai}. Now Proposition \ref{adet} offers the most useful description for our purposes: The ideal $\textnormal{I}(\mathcal{I})$ equals the collection of all morphisms $\varphi$ such that $\varphi f \in \mathcal{I}$ for all morphisms $f$ starting in $A$. In particular $\textnormal{I}(\mathcal{I})$ is uniquely determined by $\mathcal{I}(A,-)$. 

A similar description holds for the smallest torsion-free ideal containing $\mathcal{I}$, denoted by $\textnormal{{J}}(\mathcal{I})$: The ideal $\textnormal{{J}}(\mathcal{I})$ equals the collection of all morphisms $\psi$ such that $f \psi \in \mathcal{I}$ for all morphisms $f$ ending in $DA^\op$.
\end{rem}

Let $\mathcal{I}$ be an ideal. We would like to know when $\textnormal{I}(\mathcal{I})$ or $\textnormal{J}(\mathcal{I})$ is functorially finite depending on $\mathcal{I}$. This is fully answered by the following result.

\begin{coro}\label{coro} Let $\mathcal{I}$ be an ideal of $\modA$. 
\begin{itemize}
    \item[\rm (a)] The torsion ideal ${\rm I}(\mathcal{I})$ is functorially finite if and only if $A$ admits a left $\mathcal{I}$-approximation.
    \item[\rm (b)] The torsion-free ideal ${\rm J}(\mathcal{I})$ is functorially finite if and only if $DA^\op$ admits a right $\mathcal{I}$-approximation.
\end{itemize}
\end{coro}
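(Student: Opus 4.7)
The plan is to reduce both statements to Theorem \ref{ffiff} by showing that the existence of a left $\mathcal{I}$-approximation of $A$ is equivalent to the existence of a left $\textnormal{I}(\mathcal{I})$-approximation of $A$, and analogously on the right with $DA^{\op}$ and $\textnormal{J}(\mathcal{I})$.

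For part (a), the first observation I would make is the equality
\begin{align*}
\textnormal{I}(\mathcal{I})(A,-) = \mathcal{I}(A,-).
\end{align*}
The inclusion $\mathcal{I}(A,-)\subseteq \textnormal{I}(\mathcal{I})(A,-)$ is trivial. For the reverse, by the description of $\textnormal{I}(\mathcal{I})$ in Remark \ref{smallt}, a morphism $\varphi\colon A\to X$ lies in $\textnormal{I}(\mathcal{I})$ precisely when $\varphi f\in \mathcal{I}$ for every $f\colon A\to A$; applying this with $f=1_A$ forces $\varphi\in\mathcal{I}$.

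Using this, I would then show that $\varphi\colon A\to C_A$ is a left $\mathcal{I}$-approximation of $A$ if and only if it is a left $\textnormal{I}(\mathcal{I})$-approximation of $A$. Indeed, membership $\varphi\in\mathcal{I}$ is the same as $\varphi\in\textnormal{I}(\mathcal{I})$ by the displayed equality, and for the factorization property, any $f\colon A\to X$ in $\textnormal{I}(\mathcal{I})$ is already in $\mathcal{I}$, and vice versa. Finally, since $\textnormal{I}(\mathcal{I})$ is a torsion ideal, Proposition \ref{ortho} gives the ideal torsion pair $(\textnormal{I}(\mathcal{I}), \textnormal{I}(\mathcal{I})^\perp)$, and Theorem \ref{ffiff} (a)$\Leftrightarrow$(b) applied to this pair converts the existence of a left $\textnormal{I}(\mathcal{I})$-approximation of $A$ into functorial finiteness of $\textnormal{I}(\mathcal{I})$, completing (a).

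For part (b), the argument is dual: using the dual description in Remark \ref{smallt}, one obtains $\textnormal{J}(\mathcal{I})(-, DA^{\op}) = \mathcal{I}(-, DA^{\op})$, which identifies right $\mathcal{I}$-approximations of $DA^{\op}$ with right $\textnormal{J}(\mathcal{I})$-approximations, and then Theorem \ref{ffiff} (a)'$\Leftrightarrow$(b)' applied to $({}^{\perp}\textnormal{J}(\mathcal{I}), \textnormal{J}(\mathcal{I}))$ finishes the proof. The only point requiring any care is the verification that membership and factorization both transfer between $\mathcal{I}$ and its closure on morphisms out of $A$ (resp.\ into $DA^{\op}$); once this bookkeeping is observed, the corollary is essentially an immediate consequence of Theorem \ref{ffiff}.
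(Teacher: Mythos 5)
Your proposal is correct and follows essentially the same route as the paper: it deduces $\mathcal{I}(A,-) = \textnormal{I}(\mathcal{I})(A,-)$ (resp.\ the dual identity at $DA^{\op}$) from Remark \ref{smallt}, identifies the corresponding approximations, and then invokes Theorem \ref{ffiff}. The extra bookkeeping you include (taking $f = 1_A$, checking both membership and the factorization property transfer) is exactly the content the paper leaves implicit.
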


\begin{proof} (a) By Remark \ref{smallt} we have $\mathcal{I}(A,-) = {\rm I}(\mathcal{I})(A,-)$. Thus,   $A$ admits a left $\mathcal{I}$-approximation if and only if $A$ admits a left ${\rm I}(\mathcal{I})$-approximation. Now the claim follows by Theorem \ref{ffiff}.

(b) Similar to (a).
\end{proof}

Next, we investigate when a torsion ideal is also left $C$-determined and when a torsion-free ideal is also right $C$-determined.

\begin{lem}\label{deter} Let $(\mathcal{I}, \mathcal{J})$ be an ideal torsion pair.
\begin{itemize}
    \item[\rm (a)] The ideal $\mathcal{I}$ is left $C$-determined for some $C\in \modA$ if and only if $\mathcal{J}$ is functorially finite.
    \item[\rm (b)] The ideal $\mathcal{J}$ is right $C$-determined for some $C\in \modA$ if and only if $\mathcal{I}$ is functorially finite.
\end{itemize}
\end{lem}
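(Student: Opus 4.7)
The plan is to prove (a) directly and deduce (b) by duality. By Lemma~\ref{dual} the ideal torsion pair $(\mathcal{I},\mathcal{J})$ in $\modA$ corresponds to $(D\mathcal{J}, D\mathcal{I})$ in $\modA^{\op}$, and $D$ interchanges ``right $C$-determined'' with ``left $DC$-determined'' while preserving functorial finiteness via Theorem~\ref{ffiff}. Thus (b) for $(\mathcal{I},\mathcal{J})$ is exactly (a) for $(D\mathcal{J},D\mathcal{I})$ in $\modA^{\op}$, and it suffices to treat (a).

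For the ($\Leftarrow$) direction of (a), assume $\mathcal{J}$ is functorially finite. Theorem~\ref{ffiff}(a)' provides a right $\mathcal{J}$-approximation $\psi\colon C\to DA^{\op}$, and I claim $\mathcal{I}$ is left $C$-determined for this $C$. Let $t$ be the subfunctor of $\mathbbm{1}_A$ corresponding to $(\mathcal{I},\mathcal{J})$ via Proposition~\ref{subfun}. If $\varphi'\colon X\to Y$ fails to lie in $\mathcal{I}$, then $\varphi'(x)\notin tY$ for some $x\in X$; since $DA^{\op}$ is the injective cogenerator of $\modA$, there exists $f\colon Y\to DA^{\op}$ vanishing on $tY$ (hence $f\in\mathcal{J}$) with $f(\varphi'(x))\neq 0$. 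Factoring $f=\psi h$ through the approximation and using that $\psi\in\mathcal{J}$ annihilates $tC$, the inequality $\psi(h(\varphi'(x)))\neq 0$ forces $h(\varphi'(x))\notin tC$, and hence $h\varphi'\notin\mathcal{I}$.

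For the ($\Rightarrow$) direction, assume $\mathcal{I}$ is left $C$-determined. Rewritten through $t$, this is the identity $tY=\bigcap_{f\in\Hom(Y,C)}f^{-1}(tC)$ for every $Y\in\modA$. Since $\Hom(Y,C)$ is finitely generated over the commutative artinian ring $k$, the intersection is attained on a finite $k$-generating set $u_1,\dots,u_m$ of $\Hom(Y,C)$, so $Y/tY$ embeds in $(C/tC)^m$. I then pick a finite $k$-generating set $f_1,\dots,f_n$ of the $k$-module $\mathcal{J}(C,DA^{\op})\cong D(C/tC)$ and form $\psi=(f_1,\dots,f_n)\colon C^n\to DA^{\op}$, which lies in $\mathcal{J}$. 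For any $g\colon Y\to DA^{\op}$ in $\mathcal{J}$, I factor $g$ through $Y/tY$, extend along $Y/tY\hookrightarrow(C/tC)^m$ to $\tilde g\colon(C/tC)^m\to DA^{\op}$ by injectivity of $DA^{\op}$, expand each component $\tilde g_j=\sum_l b_{jl}f_l$ in the chosen generators, and reassemble the scalars with the $u_j$'s into the $A$-linear map $h=(\sum_j b_{j1}u_j,\dots,\sum_j b_{jn}u_j)\colon Y\to C^n$, which satisfies $g=\psi h$. Hence $\psi$ is a right $\mathcal{J}$-approximation of $DA^{\op}$ and $\mathcal{J}$ is functorially finite by Theorem~\ref{ffiff}.

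The delicate point is the final reassembly in the ($\Rightarrow$) direction: combining the $k$-coefficients $b_{jl}$ with the $u_j$'s to produce a single $A$-linear morphism $h\colon Y\to C^n$ witnessing $g=\psi h$. This uses essentially that $k$ lies in the centre of $A$, so that $k$-linear combinations of $A$-linear maps are again $A$-linear, and that the finite $k$-generation of both $\Hom(Y,C)$ and $D(C/tC)$ eliminates the (a priori infinite) quantifier over all morphisms into $C$.
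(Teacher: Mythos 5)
Your proof is correct, and it follows the paper's overall architecture (reduce everything to the existence of a right $\mathcal{J}$-approximation of $DA^\op$ and invoke Theorem \ref{ffiff}), but the mechanism differs in the forward direction. For ($\Leftarrow$) your argument is essentially the paper's, rewritten element-wise through the subfunctor $t$: the paper composes $\varphi$ with an arbitrary $\alpha\in\mathcal{J}$, embeds its codomain into $(DA^\op)^n$ and factors through $\psi^n$, whereas you take the contrapositive and use the cogenerator property of $DA^\op$ to detect $\varphi'(x)\notin tY$; same idea, same choice of $C$. For ($\Rightarrow$) the routes genuinely diverge. The paper introduces the auxiliary ideal $\mathcal{J}_C$ of morphisms factoring as $\psi f$ with $\psi\in\mathcal{J}$ starting in $C^n$, proves ${\rm J}(\mathcal{J}_C)=\mathcal{J}$ using ${}^\perp\mathcal{J}_C\subseteq\mathcal{I}$ and left $C$-determinedness, and then only has to check that the map $\psi\colon C^m\to DA^\op$ built from $k$-generators of $\mathcal{J}(C,DA^\op)$ is a right $\mathcal{J}_C$-approximation, which is immediate; Corollary \ref{coro} then does the rest. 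You instead show directly that the same $\psi$ is a right $\mathcal{J}$-approximation of $DA^\op$, which costs you the identity $tY=\bigcap_{f\colon Y\to C}f^{-1}(tC)$, the resulting embedding $Y/tY\hookrightarrow (C/tC)^m$ obtained from a finite $k$-generating set of $\Hom(Y,C)$, injectivity of $DA^\op$, and the reassembly step using that $k$ acts centrally -- all of which is correct, including the final computation $g=\psi h$. The paper's detour through $\mathcal{J}_C$ buys a shorter, purely ideal-theoretic verification at the price of relying on Remark \ref{smallt} and Corollary \ref{coro}; your version is more self-contained and makes explicit where finite $k$-generation and the injective cogenerator enter. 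Handling (b) by the duality of Lemma \ref{dual} rather than by repeating the argument is also fine, since $D$ exchanges left/right determination and preserves functorial finiteness.
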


\begin{proof} (a) Let $\mathcal{I}$ be left $C$-determined. We denote by $\mathcal{J}_C$ the ideal consisting of all morphisms $\psi f$ with $\psi \in \mathcal{J}$ starting in $C^n$ for some $n>0$ and $f$ arbitrary. Clearly $\mathcal{J}_C \subseteq \mathcal{J}$ and thus ${\rm J}(\mathcal{J}_C) \subseteq \mathcal{J}$. To deduce equality, we show ${}^\perp \mathcal{J}_C \subseteq \mathcal{I}$. Let $\varphi$ such that $\psi f \varphi = 0$ for all $\psi \in \mathcal{J}$ starting in $C^n$ and arbitrary $f$. Then $ f \varphi \in {}^\perp \mathcal{J} = \mathcal{I}$ and because $\mathcal{I}$ is left $C$-determined, also $\varphi \in \mathcal{I}$. We conclude that ${}^\perp \mathcal{J}_C \subseteq \mathcal{I}$. Hence,  ${\rm J}(\mathcal{J}_C) = \mathcal{J}$. By Corollary \ref{coro} it suffices to show that $DA^\op$ admits a right $\mathcal{J}_C$-approximation for $\mathcal{J}$ to be functorially finite. Let $\psi_i, 1\leq i \leq m$ generate $\mathcal{J}(C, DA^\op)$ as a $k$-module. Then all $\psi_i$ induce a morphism $\psi \colon C^m \rightarrow  DA^\op$. Clearly $\psi \in \mathcal{J}_C$. Now let $\alpha\colon M \rightarrow DA^\op$ be in $\mathcal{J}_C$, so $\alpha = \beta f$ for some $f$ and for some $\beta \in \mathcal{J}$ starting in $C^n$ with $n \in \mathbbm{N}$. Then by construction of $\psi$, the morphism $\beta $ factors through $\psi$. Hence,  $\alpha$ factors through $\psi$ and $\psi$ is a right $\mathcal{J}_C$-approximation.

Let $\mathcal{J}$ be functorially finite and $\psi\colon C\rightarrow DA^\op$ a right $\mathcal{J}$-approximation. Further, let $\varphi \colon M \rightarrow N$ be a morphism such that $f \varphi \in \mathcal{I}$ for all morphisms $f$ ending in $C$. We show $\varphi \in {}^\perp \mathcal{J} = \mathcal{I}$. Let $\alpha\colon N \rightarrow L $ in $\mathcal{J}$ and $\iota \colon L \rightarrow (DA^\op)^n$ a monomorphism. Then $\iota \alpha$ factors through $\psi^n$, that is $\iota \alpha = \psi^k f$. Now $f \varphi \in \mathcal{I}$ implies $\iota 
\alpha \varphi = \psi^k f \varphi = 0$. Thus,   $\alpha \varphi = 0$ and we conclude that $\varphi \in {}^\perp \mathcal{J} = \mathcal{I}$. Hence,  $\mathcal{I}$ is left $C$-determined.

(b) Similar to (a).
\end{proof}

It is interesting to note that, for an ideal torsion pair $(\mathcal{I}, \mathcal{J})$, the property of one of the ideals to be determined by an object on its non-trivial side is equivalent to the other ideal being functorially finite. Now the property of being functorially finite is equivalent for both of the ideals by Theorem \ref{ffiff}. Hence, we can describe functorially finite torsion ideals by the objects that they are determined by, plus some additional information. This is done by the following result.

\begin{thm}\label{det}
\begin{itemize}
    \item[\rm (a)] In $\modA$ we have an equality
    \begin{align*}
        \left\{\begin{matrix}\text{functorially finite}\\ \text{torsion ideals}\end{matrix}\right\} = \bigcup_{C \in \modA } \left\{\begin{matrix}\text{left $C$-determined} \\ \text{torsion ideals}\end{matrix}\right\}.
    \end{align*}
    \item[\rm (b)] \textit{For $C\in \modA$ there exists a one to one correspondence
    \begin{align*}
        \left\{\begin{matrix}\text{left $C$-determined} \\ \text{torsion ideals}\end{matrix}\right\} \longleftrightarrow \left\{\begin{matrix}
 \text{bi-submodules of} \\ 
 \text{ ${}_AC_{\textnormal{End}_A(C)^{\textnormal{op}}}$ }
\end{matrix} \right\}
    \end{align*}
    via $\mathcal{I} \mapsto tC$, where $t$ denotes the subfunctor of $\mathbbm{1}_A$ corresponding to $\mathcal{I}$.}
\end{itemize}
\end{thm}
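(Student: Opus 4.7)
Part (a) is essentially a formal consequence of two results already proved. By Lemma~\ref{deter}(a), for an ideal torsion pair $(\mathcal{I},\mathcal{J})$, the torsion ideal $\mathcal{I}$ is left $C$-determined for some $C$ iff $\mathcal{J}$ is functorially finite. By Theorem~\ref{ffiff}, $\mathcal{J}$ is functorially finite iff $\mathcal{I}$ is. So I would just chain these two equivalences and be done with (a), remarking that every torsion ideal sits inside a unique ideal torsion pair $(\mathcal{I},\mathcal{I}^\perp)$ by Proposition~\ref{ortho}.

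For part (b) I would set up the correspondence via subfunctors of $\mathbbm{1}_A$ (Proposition~\ref{subfun}). Given a torsion ideal $\mathcal{I}$ with corresponding $t\leq \mathbbm{1}_A$, the submodule $tC$ of $C$ is automatically stable under the right action of $\textnormal{End}_A(C)^\op$: for any $\alpha\in\textnormal{End}_A(C)$, functoriality of $t$ gives $\alpha(tC)\subseteq tC$. Thus $tC$ is a bi-submodule of ${}_AC_{\textnormal{End}_A(C)^\op}$, so the map is well-defined.

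To build the inverse, given a bi-submodule $U\subseteq C$, define
\begin{equation*}
t_U M \;=\; \{\,x\in M \mid f(x)\in U \text{ for all } f\colon M\to C\,\}.
\end{equation*}
This is a submodule of $M$, and for $g\colon M\to M'$ and $x\in t_UM$, every $f'\colon M'\to C$ satisfies $f'g(x)\in U$, so $g(x)\in t_{U}M'$; hence $t_U$ is a subfunctor of $\mathbbm{1}_A$. Let $\mathcal{I}_U$ be the associated torsion ideal. Two checks remain. First, $t_UC=U$: the inclusion $U\subseteq t_UC$ uses precisely that $U$ is a bi-submodule (so $f(x)\in U$ for every $f\in\textnormal{End}_A(C)$ and $x\in U$), and the reverse inclusion is immediate by taking $f=1_C$. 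Second, $\mathcal{I}_U$ is left $C$-determined: if $f\varphi\in\mathcal{I}_U$ for all $f\colon N\to C$, then $f(\im\varphi)\subseteq t_UC=U$ for every such $f$, so by the very definition of $t_U$, $\im\varphi\subseteq t_UN$, i.e.\ $\varphi\in\mathcal{I}_U$.

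The main thing to verify is that these two constructions are mutually inverse, and this is where the left $C$-determined hypothesis actually enters. One direction $U\mapsto\mathcal{I}_U\mapsto t_UC=U$ is the computation just done. For the other direction $\mathcal{I}\mapsto tC\mapsto \mathcal{I}_{tC}$, I would show $t_{tC}=t$: the inclusion $t\subseteq t_{tC}$ is automatic from functoriality of $t$, while $t_{tC}\subseteq t$ is exactly the translation of left $C$-determinedness at the level of elements, once one observes that for $\varphi\colon A\to M$ sending $1\mapsto x$ and $f\colon M\to C$, membership $f\varphi\in\mathcal{I}$ is equivalent to $f(x)\in tC$, and $\varphi\in\mathcal{I}$ is equivalent to $x\in tM$. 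This element-level reformulation, carried out via Lemma~\ref{homiso} to identify $\mathcal{I}(A,-)$ with $t$, is the only non-formal step.
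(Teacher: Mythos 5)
Your proposal is correct and follows essentially the same route as the paper: part (a) by combining Lemma~\ref{deter} with Theorem~\ref{ffiff}, and part (b) by constructing an explicit inverse to $\mathcal{I}\mapsto tC$ and using left $C$-determinedness (via the identification of Lemma~\ref{homiso} at the module $A$) to recover $\mathcal{I}$ from $tC$. The only cosmetic difference is that you build the inverse on the subfunctor side, defining $t_U M=\{x\in M\mid f(x)\in U\ \text{for all } f\colon M\to C\}$ and passing through Proposition~\ref{subfun}, whereas the paper defines the same object directly as the ideal $\mathcal{I}_X$ of morphisms $\varphi$ with $g\varphi f(1)\in X$ for all $f\colon A\to M$, $g\colon N\to C$; the two descriptions coincide and the verifications are the same.
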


\begin{proof} (a) This follows by Lemma \ref{deter} and Theorem \ref{ffiff}.

(b) By the functoriality of $t$, the submodule $tC$ of $C$ is invariant under $\textnormal{End}_A(C)$. Hence,  $tC$ is a submodule of the bimodule ${}_AC_{\textnormal{End}_A(C)^{\textnormal{op}}}$. To show that the assignment $C \mapsto tC$ is one to one, we construct an inverse assignment: For a submodule $X$ of ${}_AC_{\textnormal{End}_A(C)^{\textnormal{op}}}$ let $\mathcal{I}_X$ be the collection of all morphisms $\varphi\colon M \rightarrow N$ such that for all $f\colon A\rightarrow M$ and $g\colon N \rightarrow C$ we have $g \varphi f (1) \in X$. Clearly $\mathcal{I}_X$ is an ideal and right $A$-determined as well as left $C$-determined by construction. By Proposition \ref{adet} the ideal $\mathcal{I}_X$ is a left $C$-determined torsion ideal. Further, $X\rightarrow C$ is contained in $\mathcal{I}_X$ and $C \rightarrow C/X$ in $\mathcal{I}_X ^\perp$. Hence, if $t$ denotes the subfunctor of $\mathbbm{1}_A$ corresponding to $\mathcal{I}_X$, then $tC = X$. It is left to show $\mathcal{I}_{tC} = \mathcal{I}$, where $t$ denotes the subfunctor of $\mathbbm{1}_A$ corresponding to $\mathcal{I}$. If $\varphi \colon M \rightarrow N$ is in $\mathcal{I}$, then for all $f \colon A \rightarrow M$ and $g\colon N \rightarrow C$ we have $\im g \varphi f \subseteq tC$. Thus,   $g\varphi f (1) \in tC$ and $\varphi \in \mathcal{I}_{tC}$. On the contrary, if $\varphi \colon M \rightarrow N$ is in $\mathcal{I}_{tC}$, then for all $f\colon A \rightarrow M$ and $g\colon N \rightarrow C$ we have $ g \varphi f(1) \in tC $. Thus,   always $\im g \varphi \subseteq tC$ and $g \varphi \in \mathcal{I}$. Because $\mathcal{I}$ is left $C$-determined, we conclude that $\varphi \in \mathcal{I}$, so $\mathcal{I}_{tC} = \mathcal{I}$.
\end{proof}

We proceed by giving a connection with the classical notion of morphisms determined by objects (see Section 1).

\begin{lem}\label{conn}
\begin{itemize}
    \item[\rm (a)] Let $\mathcal{I}$ be a functorially finite torsion ideal and $\varphi\colon A \rightarrow M$ a left $\mathcal{I}$-approximation. Then $\mathcal{I}$ is left $C$-determined if and only if $\varphi$ is left $C$-determined. 
    \item[\rm (b)] Let $\mathcal{J}$ be a functorially finite torsion-free ideal and $\psi \colon N \rightarrow DA^\op$ a right $\mathcal{J}$-approximation. Then $\mathcal{J}$ is right $C$-determined if and only if $\psi$ is right $C$-determined.
\end{itemize}
\end{lem}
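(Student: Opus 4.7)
The plan is to exploit a single observation: since $\varphi\colon A\to M$ is a left $\mathcal{I}$-approximation and itself lies in $\mathcal{I}$, a morphism $\alpha\colon A\to X$ belongs to $\mathcal{I}$ if and only if $\alpha$ factors through $\varphi$. This converts the morphism-level condition ``factors through $\varphi$'' into the ideal-level condition ``lies in $\mathcal{I}$'' for all morphisms whose source is $A$. The task is then to bridge this restriction with the definition of a left $C$-determined ideal, which ranges over morphisms with arbitrary source.

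For the forward direction of (a), suppose $\mathcal{I}$ is left $C$-determined. Given $\varphi'\colon A\to N'$ such that $f\varphi'$ factors through $\varphi$ for every $f\colon N'\to C$, the above equivalence yields $f\varphi'\in\mathcal{I}$ for all such $f$, hence $\varphi'\in\mathcal{I}$ by hypothesis, and therefore $\varphi'$ factors through $\varphi$ by the approximation property.

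For the converse, assume $\varphi$ is left $C$-determined and take any $\varphi'\colon M'\to N'$ with $f\varphi'\in\mathcal{I}$ for every $f\colon N'\to C$. I reduce to morphisms with source $A$ using Remark \ref{smallt}: because $\mathcal{I}$ is a torsion ideal, $\varphi'\in\mathcal{I}$ is equivalent to $\varphi' g\in\mathcal{I}$ for every $g\colon A\to M'$. Fix such $g$. Then for each $f\colon N'\to C$ the composition $f(\varphi' g)=(f\varphi')g$ lies in $\mathcal{I}$ (ideal property) and starts at $A$, so by the equivalence it factors through $\varphi$. Applying the morphism-level left $C$-determined property of $\varphi$ to $\varphi' g\colon A\to N'$ shows that $\varphi' g$ factors through $\varphi$; hence $\varphi' g\in\mathcal{I}$, and Remark \ref{smallt} concludes $\varphi'\in\mathcal{I}$.

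Part (b) follows by applying the same argument to the dual situation: via Lemma \ref{dual}, $(\mathcal{I},\mathcal{J})$ yields the ideal torsion pair $(D\mathcal{J},D\mathcal{I})$ in $\modA^\op$, and $\psi$ dualizes to a left $D\mathcal{J}$-approximation of $A^\op$; both the morphism- and ideal-level versions of right $C$-determined translate under $D$ into left $DC$-determined conditions, and the dual half of Remark \ref{smallt} supplies the analogous reduction for torsion-free ideals. The main obstacle, such as it is, lies in the converse of (a): one must ensure that the reduction to morphisms starting at $A$ via Remark \ref{smallt} feeds correctly into the morphism-determined hypothesis for $\varphi$. Once the equivalence between ``$\mathcal{I}$-membership'' and ``factoring through $\varphi$'' for morphisms with source $A$ is in hand, the rest is a formal unpacking of definitions.
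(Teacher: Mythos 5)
Your proposal is correct and follows essentially the same route as the paper: the forward direction is identical, and in the converse you reduce to morphisms starting at $A$ via the fact that a torsion ideal is right $A$-determined (your Remark \ref{smallt} appeal is just Proposition \ref{adet} in that guise) before applying the morphism-level determination of $\varphi$, exactly as in the paper's proof; part (b) is handled by duality in both.
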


\begin{proof} (a) Let $\mathcal{I}$ be left $C$-determined and $\varphi' \colon A \rightarrow M'$ a morphism such that $f \varphi'$ factors through $\varphi$ for all $f \colon M' \rightarrow C$. Then always $f \varphi' \in \mathcal{I}$ and because $\mathcal{I}$ is left $C$-determined, we conclude that $\varphi'\in \mathcal{I}$. Hence,  $\varphi'$ factors through the left $\mathcal{I}$-approximation $\varphi$. It follows that $\varphi$ is left $C$-determined.

Let $\varphi$ be left $C$-determined and $\varphi' \colon M' \rightarrow N$ a morphism such that $f \varphi' \in \mathcal{I}$ for all $f \colon N \rightarrow C$. Then for all $g \colon A \rightarrow M'$ the morphism $f \varphi' g \in \mathcal{I}$ factors through $\varphi$. Because $f$ is arbitrary and $\varphi$ left $C$-determined, it follows that $\varphi' g$ factors through $\varphi$. Thus,   $\varphi' g\in \mathcal{I}$ and because $\mathcal{I}$ is right $A$-determined by Proposition \ref{adet}, we conclude that $\varphi \in \mathcal{I}$. Hence,  $\mathcal{I}$ is left $C$-determined.

(b) Similar to (a).
\end{proof}

For a functorially finite ideal torsion pair $(\mathcal{I}, \mathcal{J})$ we have already seen a way to find $C \in \modA$ such that $\mathcal{I}$ is left $C$-determined. Namely, we can choose $C$ by a right $\mathcal{J}$-approximation $C \rightarrow DA^\op$ (see the proof of Lemma \ref{deter}). The following result shows a way to find $C \in \modA$, only considering the ideal $\mathcal{I}$.

\begin{coro}
\begin{itemize}
    \item[\rm (a)] Let $\mathcal{I}$ be a functorially finite torsion ideal, $\varphi \colon A \rightarrow M$ a left $\mathcal{I}$-approximation, $K$ the kernel of $\varphi$ and $Q$ its cokernel. Then $\mathcal{I}$ is left $C$-determined for $C = \tau Q \oplus I$, where $\tau$ denotes the Auslander-Reiten translation and $I$ the injective hull of the top of $K$.
    \item[\rm (b)] Let $\mathcal{J}$ be a functorially finite torsion-free ideal, $\psi \colon N \rightarrow DA^\op $ a right $\mathcal{J}$-approximation, $K$ the kernel of $\psi$ and $Q$ its cokernel. Then $\mathcal{J}$ is right $C$-determined for $C = \tau^{-} K \oplus P$, where $\tau^{-}$ denotes the inverse of the Auslander-Reiten translation and $P$ the projective hull of the socle of $K$. 
\end{itemize}
\end{coro}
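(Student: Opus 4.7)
The proof proposal is short: this follows immediately by combining the two results already established in this section, namely Lemma~\ref{conn} and Theorem~\ref{ar}. The plan is to exhibit the asserted $C$ as a determining object for a specific approximation morphism, and then transfer this property to the whole ideal via Lemma~\ref{conn}.

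For (a), take the given left $\mathcal{I}$-approximation $\varphi \colon A \to M$. By Theorem~\ref{ar}(a), applied to the single morphism $\varphi$ (whose kernel is $K$ and whose cokernel is $Q$), the morphism $\varphi$ is left $(\tau Q \oplus I)$-determined, where $I$ is the injective hull of the top of $K$. Lemma~\ref{conn}(a) then applies because $\mathcal{I}$ is functorially finite and $\varphi$ is a left $\mathcal{I}$-approximation: it says that $\mathcal{I}$ is left $C$-determined if and only if $\varphi$ is. Taking $C = \tau Q \oplus I$ yields the claim.

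For (b), proceed dually with the right $\mathcal{J}$-approximation $\psi \colon N \to DA^\op$ (with kernel $K$ and cokernel $Q$): apply Theorem~\ref{ar}(b) to $\psi$ to exhibit a concrete $C = \tau^{-} K \oplus P$ for which $\psi$ is right $C$-determined, and then invoke Lemma~\ref{conn}(b) to transfer the property from $\psi$ to $\mathcal{J}$.

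There is no real obstacle here, since the corollary is essentially a repackaging of Auslander's classical theorem in the ideal-theoretic language developed in this section. The only mild care required is to match the description of the summands of $C$ (in particular which of $K$ or $Q$ enters in the projective/injective hull summand) with the formulation of Theorem~\ref{ar}; once the dictionary via Lemma~\ref{conn} is in place the argument is a one-line application in each part.
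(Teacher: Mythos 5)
Your argument is correct and coincides with the paper's own proof, which likewise obtains the statement by applying Theorem~\ref{ar} to the approximation morphism and transferring the determination property to the ideal via Lemma~\ref{conn}. Nothing further is needed.
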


\begin{proof} This follows by Lemma \ref{conn} and Proposition \ref{ar}.
\end{proof}

\section{Subcategories related to ideal torsion pairs}

We start by discussing the obvious subcategories of $\modA$ which can relate to ideal torsion pairs: torsion classes and torsion-free classes.

\begin{rem}\label{torsion}\rm Let $(\mathcal{T}, \mathcal{F})$ be a torsion pair. Then $\Hom(M,N) = 0$ for all $M\in \mathcal{T}$ and $N\in \mathcal{F}$ implies $\psi \varphi = 0$ for all $\varphi\in \langle \mathcal{T} \rangle$ and $\psi \in \langle \mathcal{F} \rangle$. Further, the short exact sequence
\begin{align*}
    0 \longrightarrow L \xlongrightarrow{\varphi} M \xlongrightarrow{\psi} N \longrightarrow 0
\end{align*}
with $L\in \mathcal{T}$ and $N\in \mathcal{F}$ also fulfills $\varphi \in \langle \mathcal{T} \rangle$ and $\psi \in \langle \mathcal{F} \rangle$. Hence,  $(\langle \mathcal{T} \rangle, \langle \mathcal{F} \rangle)$ is an ideal torsion pair. Thus,   $\langle \mathcal{T} \rangle$ is a torsion ideal and $\langle \mathcal{F} \rangle$ a torsion-free ideal.
\end{rem}

Applying theory of ideal torsion pairs to torsion pairs, we obtain the following well-known result \cite{Smalo}.

\begin{coro} Let $(\mathcal{T}, \mathcal{F})$ be a torsion pair. Then $\mathcal{T}$ is functorially finite if and only if $\mathcal{F}$ is functorially finite.
\end{coro}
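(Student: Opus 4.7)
The plan is to deduce this as a direct corollary of the ideal torsion pair machinery already developed, by translating the classical torsion pair $(\mathcal{T}, \mathcal{F})$ into the ideal torsion pair $(\langle \mathcal{T}\rangle, \langle\mathcal{F}\rangle)$ provided by Remark \ref{torsion}, applying Theorem \ref{ffiff} on that side, and then pulling the conclusion back to $\mathcal{T}$ and $\mathcal{F}$ via Lemma \ref{iff}.

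First I would reduce the claim to the ideal version: by Remark \ref{torsion}, $(\langle \mathcal{T}\rangle, \langle\mathcal{F}\rangle)$ is an ideal torsion pair, so Theorem \ref{ffiff} immediately yields
\begin{align*}
\langle \mathcal{T} \rangle \text{ is functorially finite} \iff \langle \mathcal{F} \rangle \text{ is functorially finite}.
\end{align*}
Hence it suffices to show that $\mathcal{T}$ is functorially finite if and only if $\langle \mathcal{T} \rangle$ is, and similarly for $\mathcal{F}$ and $\langle \mathcal{F} \rangle$.

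Next I would establish the equivalence between functorial finiteness of a full additive subcategory $\mathcal{C}$ and of its associated ideal $\langle \mathcal{C} \rangle$, which is essentially Lemma \ref{iff}. The forward direction is immediate from Lemma \ref{iff}(a): any left (right) $\mathcal{C}$-approximation is a left (right) $\langle \mathcal{C}\rangle$-approximation. For the converse, if every module admits a left $\langle \mathcal{C}\rangle$-approximation, then it admits a left minimal $\langle \mathcal{C}\rangle$-approximation (as minimal versions of existing approximations always exist in $\modA$), and by Lemma \ref{iff}(b) such a morphism is in fact a left minimal $\mathcal{C}$-approximation; the contravariant case is dual.

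Combining the three equivalences gives the chain
\begin{align*}
\mathcal{T} \text{ is f.f.} \iff \langle \mathcal{T}\rangle \text{ is f.f.} \iff \langle \mathcal{F}\rangle \text{ is f.f.} \iff \mathcal{F} \text{ is f.f.},
\end{align*}
which is the desired statement. The only mildly delicate step is the passage from a general $\langle \mathcal{C}\rangle$-approximation to a minimal one in $\mathcal{C}$; once that is noted, everything reduces to applying Lemma \ref{iff} and Theorem \ref{ffiff} in sequence, so there is no substantial obstacle beyond invoking the already-established machinery.
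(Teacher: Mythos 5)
Your proposal is correct and follows essentially the same route as the paper: pass to the ideal torsion pair $(\langle \mathcal{T}\rangle, \langle\mathcal{F}\rangle)$ via Remark \ref{torsion}, apply Theorem \ref{ffiff}, and transfer functorial finiteness between $\mathcal{C}$ and $\langle\mathcal{C}\rangle$ using Lemma \ref{iff} (with the passage through minimal approximations, which the paper leaves implicit). No gaps.
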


\begin{proof} By Lemma \ref{iff} we can check that $\langle \mathcal{T} \rangle$ is functorially finite if and only if $\langle \mathcal{F} \rangle$ is functorially finite. This is the case by Remark \ref{torsion} and Theorem \ref{ffiff}.
\end{proof}

For an ideal $\mathcal{I}$ let $\ob \mathcal{I}$ be the collection of all $M\in \modA$ with $1_M \in \mathcal{I}$ or equivalently $tM = M$. A full additive subcategory $\mathcal{C}$ of $\modA$ is a \textit{mono-closed} (\textit{epi-closed}) \textit{class}, if $M\in \mathcal{C}$ and $N\leq M$ implies $N \in \mathcal{C}$ (respectively $M/N \in \mathcal{C}$). As it turns out, those classes are precisely the ones that arise from ideal torsion pairs.

\begin{lem}\label{prooop}\begin{itemize}
    \item[\rm (a)] If $\mathcal{C}$ is an epi-closed class, then $\langle \mathcal{C}\rangle $ is a torsion ideal. If $\mathcal{I}$ is a torsion ideal, then $\ob \mathcal{I}$ is an epi-closed class.
    \item[\rm (b)] If $\mathcal{C}$ is a mono-closed class, then $\langle \mathcal{C}\rangle $ is a torsion-free ideal. If $\mathcal{J}$ is a torsion-free ideal, then $\ob \mathcal{J}$ is a mono-closed class.
\end{itemize}
\end{lem}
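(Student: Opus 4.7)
The plan is to reduce all four implications to the closure characterizations from Lemma \ref{charai}: a torsion ideal is precisely one satisfying $\varphi f\in\mathcal{I}\Rightarrow\varphi\in\mathcal{I}$ for epimorphisms $f$, and dually for torsion-free ideals. That $\langle\mathcal{C}\rangle$ is an ideal for any additive subcategory, and that $\ob\mathcal{I},\ob\mathcal{J}$ are closed under direct sums and summands (via $1_{M\oplus N}=\iota_M\pi_M+\iota_N\pi_N$ and the ideal axioms), are immediate from the definitions. So the content lies entirely in verifying the one-sided closure properties.

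For the first half of (a), suppose $f\colon Z\to X$ is an epimorphism and $\varphi f=hg$ with $g\colon Z\to C$, $h\colon C\to Y$, $C\in\mathcal{C}$. I will construct a factorization of $\varphi$ through a quotient of $C$. The key observation is that $\varphi f$ vanishes on $\ker f$, so $h$ annihilates the subobject $g(\ker f)\leq C$ and descends to $\bar h\colon C/g(\ker f)\to Y$; meanwhile $g$ descends along $Z\twoheadrightarrow Z/\ker f\cong X$ to $\bar g\colon X\to C/g(\ker f)$. Since $f$ is epi, the identity $\bar h\bar g f=hg=\varphi f$ forces $\bar h\bar g=\varphi$, and epi-closedness of $\mathcal{C}$ places $C/g(\ker f)$ in $\mathcal{C}$, so $\varphi\in\langle\mathcal{C}\rangle$. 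For the second half, if $M\in\ob\mathcal{I}$ and $\pi\colon M\twoheadrightarrow M/N$ is any surjection, absorption gives $\pi=\pi\cdot 1_M\in\mathcal{I}$, and rewriting as $\pi=1_{M/N}\cdot\pi$ invokes Lemma \ref{charai}(a) to yield $1_{M/N}\in\mathcal{I}$.

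Part (b) is formally dual, with the pullback replacing the quotient. Given $g\psi=h'h$ with $g\colon Y\hookrightarrow Z$ mono, $h\colon X\to C$, $h'\colon C\to Z$, $C\in\mathcal{C}$, I form the pullback $P=Y\times_Z C$. Since $g$ is mono, so is $P\to C$, and mono-closedness of $\mathcal{C}$ gives $P\in\mathcal{C}$; the universal property then produces $X\to P$ whose composition with $P\to Y$ is $\psi$, putting $\psi\in\langle\mathcal{C}\rangle$. For the second half, if $M\in\ob\mathcal{J}$ and $\iota\colon N\hookrightarrow M$ is a monomorphism, then $\iota=1_M\cdot\iota\in\mathcal{J}$, and writing $\iota=\iota\cdot 1_N$ triggers Lemma \ref{charai}(b) to give $1_N\in\mathcal{J}$.

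The only step requiring any real thought is identifying the correct replacement of $C$ in the given factorization: the quotient $C/g(\ker f)$ in (a), and the pullback $Y\times_Z C$ in (b). Everything else is routine manipulation of the ideal axioms together with Lemma \ref{charai}; no result beyond elementary abelian-category arguments is needed.
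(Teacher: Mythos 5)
Your proof is correct and takes essentially the same approach as the paper: all four implications are reduced to Lemma \ref{charai}, the two $\ob$ halves coincide verbatim with the paper's argument ($\pi=\pi 1_M\in\mathcal{I}$, $\pi=1_{M/N}\pi$, and its dual), and for $\langle\mathcal{C}\rangle$ you factor the given morphism through a quotient (resp.\ subobject) of $C$ exactly as the paper does, merely choosing $C/g(\ker f)$ and a pullback where the paper uses the image of $\varphi_1$ (and dually). The difference is only cosmetic.
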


\begin{proof} (a) Let $\varphi$ be a morphism and $f$ an epimorphism with $\varphi f \in \langle \mathcal{C} \rangle$. Then $\varphi f$ factors as $\varphi_1 \varphi_2$ with $\varphi_1$ starting in $C\in \mathcal{C}$. Because $\mathcal{C}$ is an epi-closed class, the image of $\varphi_1$ is contained in $\mathcal{C}$. Now $\im \varphi = \im \varphi f \subseteq \im \varphi_1 $ implies $\varphi \in \langle \mathcal{C}\rangle$. By Lemma \ref{charai} it follows that $\langle \mathcal{C} \rangle$ is a torsion ideal.

Let $M\in \ob \mathcal{I}$ and $N \leq M$. Consider the projection $\pi\colon M \rightarrow M/N$. Then $\pi = \pi 1_M \in \mathcal{I}$. Further, $\pi = 1_{M/N} \pi$. By Lemma \ref{charai} it follows that $1_{M/N} \in \mathcal{I}$. Hence,  $M/N \in \ob \mathcal{I}$ and $\ob \mathcal{I}$ is an epi-closed class.  

(b) Similar to (a).
\end{proof}

\begin{rem}\label{remrem} \rm By Lemma \ref{tclosure} every torsion(-free) class in $\modA$ is an epi-closed (mono-closed) class. On the contrary, an epi-closed (mono-closed) class in $\modA$ is a torsion(-free) class if and only if it is closed under extensions. In particular this shows with Lemma \ref{prooop} that, in general, there are much more ideal torsion pairs then torsion pairs.
\end{rem}


Let $(\mathcal{I}, \mathcal{J})$ be an ideal torsion pair of $\modA$. Then $\langle \ob \mathcal{I} \rangle \subseteq \mathcal{I}$ is a torsion ideal and $\langle \ob \mathcal{J} \rangle \subseteq \mathcal{J}$ a torsion-free ideal by Lemma \ref{prooop}. We show a criteria for when $\mathcal{I} = \langle \ob \mathcal{I} \rangle$ or $\mathcal{J} = \langle \ob \mathcal{J} \rangle$.

\begin{prop}\label{objfun} Let $(\mathcal{I}, \mathcal{J})$ be an ideal torsion pair and $t$ the corresponding subfunctor of $\mathbbm{1}_A$.
\begin{itemize}
    \item[\rm (a)] The equality $\mathcal{I} = \langle \ob \mathcal{I} \rangle$ holds if and only if $t^2 = t$. In that case $tM \in \ob \mathcal{I}$ for all $M\in \modA$.
    \item[\rm (b)] The equality $\mathcal{J} = \langle \ob \mathcal{J}\rangle$ holds if and only if $(1/t)^2 = 1/t$. In that case $M/tM \in \ob \mathcal{J}$ for all $M\in \modA$.
    \item[\rm (c)] The equalities $\mathcal{I} = \langle \ob \mathcal{I} \rangle$ and $\mathcal{J} = \langle \ob \mathcal{J} \rangle$ hold if and only if $(\ob \mathcal{I}, \ob \mathcal{J})$ is a torsion pair. 
\end{itemize}

\begin{proof} (a) By Proposition \ref{scase} the torsion ideal $\mathcal{I}^2$ corresponds to $t^2$. Hence, if $\mathcal{I} = \langle \ob \mathcal{I} \rangle$, then $\mathcal{I}^2 = \mathcal{I}$ and $t^2 = t$. Let $M\in \modA$ and consider the canonical inclusion $\iota \colon t^2 M \rightarrow tM$. Then $\iota \in \mathcal{I}$. If $t^2 = t$, then $\iota = 1_{tM}$ and $tM\in \ob \mathcal{I}$. In particular $\mathcal{I} = \langle \ob \mathcal{I} \rangle$, as every morphism $L\rightarrow M$ in $\mathcal{I}$ factors through $tM$.

(b) Similar to (a).

(c) If $(\ob \mathcal{I}, \ob \mathcal{J})$ is a torsion pair, then $(\langle \ob \mathcal{I} \rangle, \langle \ob \mathcal{I} \rangle)$ is an ideal torsion pair by Remark \ref{torsion}. Since $\langle \ob \mathcal{I} \rangle \subseteq \mathcal{I}$ and $\langle \ob \mathcal{J} \rangle \subseteq \mathcal{J}$, it follows that $(\mathcal{I}, \mathcal{J})$ equals $(\langle \ob \mathcal{I} \rangle, \langle \ob \mathcal{I} \rangle)$ by Proposition \ref{ortho}. On the other hand, if $\mathcal{I} = \langle \ob \mathcal{I} \rangle$ and $\mathcal{J} = \langle \ob \mathcal{J} \rangle$, then $\Hom(\ob \mathcal{I} , \ob \mathcal{J} ) = 0$. Further, for all $M\in \modA$ there exists a short exact sequence $    0 \rightarrow tM \rightarrow M \rightarrow M/tM \rightarrow 0$ with $tM\in \ob \mathcal{I}$ by (a) and $M/tM \in \ob \mathcal{J}$ by (b). Hence,  $(\ob \mathcal{I}, \ob \mathcal{J})$ is a torsion pair.
\end{proof}
\end{prop}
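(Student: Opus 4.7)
The plan is to prove (a) first using Proposition \ref{scase}(a), then mirror the argument for (b) using Proposition \ref{scase}(b), and finally combine these with Remark \ref{torsion} and Proposition \ref{ortho}(c) to settle (c). The organizing idea is that the functorial equalities $t^2 = t$ and $(\mathbbm{1}_A/t)^2 = \mathbbm{1}_A/t$ translate, via Proposition \ref{scase}, into idempotency statements for the corresponding ideals, and idempotency turns out to be exactly what makes the ideal coincide with the ideal generated by its object support.

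For part (a), I will use Proposition \ref{scase}(a), which identifies the torsion ideal corresponding to $t \cdot t = t^2$ with the composition $\mathcal{I}\mathcal{I}$; thus $t = t^2$ is equivalent to $\mathcal{I} = \mathcal{I}\mathcal{I}$. For the direction $\mathcal{I} = \langle \ob \mathcal{I}\rangle \Rightarrow t^2 = t$, I would take $\varphi \in \mathcal{I}$ and factor $\varphi = g f$ through some $M \in \ob \mathcal{I}$; since $1_M \in \mathcal{I}$, both $f = 1_M f$ and $g = g\,1_M$ lie in $\mathcal{I}$, so $\varphi \in \mathcal{I}\mathcal{I}$, giving $\mathcal{I} \subseteq \mathcal{I}\mathcal{I}$, while the reverse containment is automatic. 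For the converse, assuming $t^2 = t$, the inclusion $t(tM) \hookrightarrow tM$ lies in $\mathcal{I}$ by Proposition \ref{subfun}(ii); but $t(tM) = tM$ makes this the identity $1_{tM}$, so $tM \in \ob \mathcal{I}$. Any $\varphi \in \mathcal{I}$ has image in $tN$ by Proposition \ref{subfun}(ii) and so factors through $tN \in \ob \mathcal{I}$, giving $\varphi \in \langle \ob \mathcal{I}\rangle$. The ``moreover'' statement $tM \in \ob \mathcal{I}$ appears as a byproduct of this argument.

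Part (b) runs in parallel using Proposition \ref{scase}(b), which identifies the torsion-free ideal $\mathcal{J}\mathcal{J}$ with the subfunctor $t''$ satisfying $\mathbbm{1}_A/t'' = (\mathbbm{1}_A/t)^2$; so $(\mathbbm{1}_A/t)^2 = \mathbbm{1}_A/t$ is equivalent to $\mathcal{J} = \mathcal{J}\mathcal{J}$. The forward direction again factors $\psi \in \mathcal{J}$ through some $N \in \ob \mathcal{J}$ and uses $1_N \in \mathcal{J}$. For the converse, expanding $(\mathbbm{1}_A/t)^2 M = (M/tM)/t(M/tM)$ shows that $(\mathbbm{1}_A/t)^2 = \mathbbm{1}_A/t$ is equivalent to $t(M/tM) = 0$ for all $M$, i.e.\ to $M/tM \in \ob \mathcal{J}$; any $\psi \in \mathcal{J}$ then factors through $M/tM \in \ob \mathcal{J}$ by Proposition \ref{subfun}(ii), yielding $\mathcal{J} = \langle \ob \mathcal{J}\rangle$.

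For (c), if the two equalities hold, then (a) and (b) supply the canonical sequence $0 \to tM \to M \to M/tM \to 0$ with outer terms in $\ob \mathcal{I}$ and $\ob \mathcal{J}$; and $\Hom(\ob \mathcal{I}, \ob \mathcal{J}) = 0$ because any $\varphi \colon M \to N$ with $1_M \in \mathcal{I}$ satisfies $\varphi = \varphi\,1_M \in \mathcal{I}$, so orthogonality with $1_N \in \mathcal{J}$ forces $\varphi = 1_N \varphi = 0$. Conversely, if $(\ob \mathcal{I}, \ob \mathcal{J})$ is a torsion pair, then Remark \ref{torsion} produces an ideal torsion pair $(\langle \ob \mathcal{I}\rangle, \langle \ob \mathcal{J}\rangle)$ contained term-by-term in $(\mathcal{I}, \mathcal{J})$; a routine $\perp$-manipulation via Proposition \ref{ortho}(c) (from $\langle \ob \mathcal{I}\rangle \subseteq \mathcal{I}$ one derives $\mathcal{J} \subseteq \langle \ob \mathcal{J}\rangle$, and dually) forces equality on both sides. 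I expect the main technical obstacle to be the bookkeeping around Proposition \ref{scase}(b): one must carefully unwind the composition of factor functors $(\mathbbm{1}_A/t)(\mathbbm{1}_A/t)$ to extract the clean condition $t(M/tM) = 0$ and equate it with $M/tM \in \ob \mathcal{J}$.
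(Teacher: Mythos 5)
Your proposal is correct and takes essentially the same route as the paper: Proposition \ref{scase} converts $t^2 = t$ (resp.\ $(\mathbbm{1}_A/t)^2 = \mathbbm{1}_A/t$) into idempotency of $\mathcal{I}$ (resp.\ $\mathcal{J}$), the converse direction uses that the inclusion $t(tM) \hookrightarrow tM$ lies in $\mathcal{I}$ and becomes $1_{tM}$, and part (c) combines Remark \ref{torsion} with the orthogonality argument of Proposition \ref{ortho}(c), exactly as in the paper. Your explicit unwinding of $(\mathbbm{1}_A/t)^2$ to the condition $t(M/tM) = 0$ is simply the spelled-out dual of the argument for (a), which the paper dismisses with ``similar to (a)''.
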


We continue by discussing when epi-closed classes and mono-closed classes are functorially finite. For $M\in \modA$ let $\add M$ be the smallest full additive subcategory of $\modA$ containing $M$, let $\gen M$ be the closure of $\add M$ under factor modules and let $\cogen M$ be the closure of $\add M $ under submodules. Clearly $\gen M$ is the smallest epi-closed class and $\cogen M$ the smallest mono-closed class containing $M$. We reprove the classification of functorially finite epi-closed (mono-closed) classes in \cite[Proposition 4.6, Proposition 4.7]{AuslanderSmalo}, using the theory of ideal torsion pairs:

\begin{prop}\label{fff} \begin{itemize}
    \item[\rm (a)] A full additive subcategory $\mathcal{C}$ of $\modA$ is a functorially finite epi-closed class if and only if $\mathcal{C} = \gen M$ for some $M\in \modA$.
    \item[\rm (b)] A full additive subcategory $\mathcal{C}$ of $\modA$ is a functorially finite mono-closed class if and only if $\mathcal{C} = \cogen M$ for some $M\in \modA$.
\end{itemize}
\end{prop}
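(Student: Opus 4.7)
My plan is to translate each assertion about the subcategory $\mathcal{C}$ into one about the associated ideal $\langle \mathcal{C}\rangle$, apply Theorem \ref{ffiff} to handle functorial finiteness at the level of ideals, and return to subcategories through Lemma \ref{iff}. I spell out part (a); part (b) is obtained by the obvious dualisation.

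For the backward direction of (a), I start from $\mathcal{C} = \gen M$, which is epi-closed by construction, so Lemma \ref{prooop}(a) tells me that $\langle \mathcal{C}\rangle$ is a torsion ideal. By Theorem \ref{ffiff} its functorial finiteness is equivalent to $A$ admitting a left $\langle \mathcal{C}\rangle$-approximation, so I need to construct one. Since $\textnormal{End}_A(M)$ is an Artin $k$-algebra and $\Hom(A,M) \cong M$ is finitely generated over $k$, I can pick $f_1, \ldots, f_n \colon A \to M$ whose underlying elements of $M$ generate $M$ as a left $\textnormal{End}_A(M)$-module, and assemble them into $f \colon A \to M^n$. By design $f$ is a left $\add M$-approximation of $A$, and because $A$ is projective every morphism $A \to C$ with $C \in \gen M$ lifts through a surjection $M^m \twoheadrightarrow C$ and then factors through $f$, so $f$ is in fact a left $\gen M$-approximation; Lemma \ref{iff}(a) makes it a left $\langle \mathcal{C}\rangle$-approximation. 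Theorem \ref{ffiff} now yields that $\langle \mathcal{C}\rangle$ is functorially finite, and since functorially finite ideals always admit minimal approximations on both sides, Lemma \ref{iff}(b) converts these into minimal $\gen M$-approximations on each side and shows that $\gen M$ is functorially finite.

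For the forward direction of (a), I would take a left $\mathcal{C}$-approximation $\varphi \colon A \to M$ (which exists by covariant finiteness) and claim $\mathcal{C} = \gen M$. The inclusion $\gen M \subseteq \mathcal{C}$ is immediate from $M \in \mathcal{C}$, closure of $\mathcal{C}$ under finite direct sums, and epi-closedness. Conversely, for any $C \in \mathcal{C}$ and any surjection $\pi \colon A^r \twoheadrightarrow C$, each component $A \to C$ factors through $\varphi$ because $C \in \mathcal{C}$, yielding a factorisation $A^r \to M^r \to C$ whose second map is forced to be surjective; hence $C \in \gen M$.

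Part (b) proceeds dually. By Lemma \ref{prooop}(b), $\langle \cogen M \rangle$ is a torsion-free ideal, and Theorem \ref{ffiff} reduces its functorial finiteness to producing a right $\langle \cogen M \rangle$-approximation of $DA^{\op}$. Using the identification $\Hom(M, DA^{\op}) \cong DM$ and its finite generation over $\textnormal{End}_A(M)$, I would assemble a morphism $g \colon M^n \to DA^{\op}$; since $DA^{\op}$ is injective, any morphism $N \to DA^{\op}$ with $N \hookrightarrow M^m$ extends to $M^m$ and therefore factors through $g$, providing the required right $\cogen M$-approximation, and Lemma \ref{iff} concludes as before. For the forward direction I would start from a right $\mathcal{C}$-approximation $N \to DA^{\op}$ and embed each $C \in \mathcal{C}$ into $(DA^{\op})^r$ to factor through $N^r$. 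The main obstacle in the whole argument is the covariant finiteness of $\gen M$ (and dually the contravariant finiteness of $\cogen M$) in the backward directions: while the trace construction supplies a right $\gen M$-approximation of any module for free, no comparable elementary construction produces left $\gen M$-approximations when the source is not projective, so one genuinely needs the ideal torsion pair formalism to reduce the test to the projective object $A$ (respectively the injective object $DA^{\op}$).
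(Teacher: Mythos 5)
Your proposal is correct and follows essentially the same route as the paper: the forward direction is the identical approximation argument ($\mathcal{C}=\gen C_A$ for a left $\mathcal{C}$-approximation $A\to C_A$), and the backward direction constructs the same morphism $A\to M^n$ from a finite generating set of $\Hom(A,M)$ and then invokes the ideal torsion pair machinery (Lemma \ref{prooop}, Theorem \ref{ffiff}, Lemma \ref{iff}) to get functorial finiteness. The only cosmetic difference is that the paper passes through Corollary \ref{coro} with $\mathcal{I}=\langle\add M\rangle$ and the identity ${\rm I}(\langle\add M\rangle)=\langle\gen M\rangle$, whereas you verify directly, using projectivity of $A$ (dually injectivity of $DA^{\op}$), that the constructed map is already a left $\gen M$-approximation.
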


\begin{proof} (a) Let $\mathcal{C}$ be a functorially finite epi-closed class and $\varphi \colon A \rightarrow C_A$ a left $\mathcal{C}$-approximation. Then $C_A \in \mathcal{C}$ and for $M\in \mathcal{C}$ an epimorphism $A^n \rightarrow M$ must factor through $\varphi^n$ for some $n\in \mathbbm{N}$. We conclude that $\gen C_A = \mathcal{C}$.

For the other implication, consider the ideal $\mathcal{I} = \langle \add M \rangle$. If $\varphi_1, \dots, \varphi_n$ denotes a basis of the $k$-module $\Hom(A,M)$, then the induced morphism $\varphi \colon A \rightarrow M^n$ is a left $\mathcal{I}$-approximation. Thus, by Corollary \ref{coro} the ideal ${\rm I}(\mathcal{I})$ is functorially finite. By Lemma 4.3 the ideal ${\rm I}(\mathcal{I})$ equals $\langle \gen M \rangle$, as $\gen M$ is the smallest epi-closed class containing $M$. Hence,  $\gen M$ is functorially finite.

(b) Similar to (a).
\end{proof}

It would be nice to have a similar result as above, classifying all functorially finite torsion ideals and torsion-free ideals (not only those generated by epi-closed classes and mono-closed classes). To do so, we consider the abelian category $\morA$, where the objects are morphisms in $\modA$ and the morphisms are commutative squares of morphisms in $\modA$. Notice that $\morA$ is equivalent to the category of finitely generated (left) $B$-modules over the Artin algebra 
\begin{align*}
    B= \begin{bmatrix}
    A &0 \\
    A & A
\end{bmatrix}.
\end{align*}

\begin{lem}\label{coronot}\begin{itemize}
    \item[\rm (a)] If $\mathcal{I}$ is a torsion ideal of $\modA$, then $\mathcal{I}$ is an epi-closed class in $\morA$. If $\mathcal{C}$ is an epi-closed class in $\morA$, then the collection of all morphisms factoring through a morphism in $\mathcal{C}$ is a torsion ideal. 
    \item[\rm (b)] If $\mathcal{J}$ is a torsion-free ideal of $\modA$, then $\mathcal{J}$ is a mono-closed class in $\morA$. If $\mathcal{C}$ is a mono-closed class in $\morA$, then the collection of all morphisms factoring through a morphism in $\mathcal{C}$ is a torsion-free ideal. 
\end{itemize}
 \end{lem}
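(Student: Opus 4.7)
\emph{Proof proposal.} Parts (a) and (b) are dual, so I sketch (a) and indicate how the argument dualizes. For the forward direction of (a), suppose $\mathcal{I}$ is a torsion ideal. The identity $\alpha_1 \oplus \alpha_2 = \iota_1\alpha_1\pi_1 + \iota_2\alpha_2\pi_2$ together with the ideal property shows that $\mathcal{I}$ is closed under direct sums in $\morA$, so it forms a full additive subcategory. For quotient-closure, an epimorphism $(a,b)\colon \alpha \to \beta$ in $\morA$ has both components $a,b$ epi in $\modA$ and satisfies $\beta a = b\alpha$; since $b\alpha \in \mathcal{I}$, Lemma \ref{charai}(a) applied to the epi $a$ forces $\beta \in \mathcal{I}$.

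Conversely, let $\mathcal{C}$ be an epi-closed class in $\morA$ and put $\langle \mathcal{C}\rangle := \{f : f = g\alpha h \text{ for some } \alpha \in \mathcal{C}\}$. That $\langle \mathcal{C}\rangle$ is an ideal is routine: additivity of $\mathcal{C}$ together with direct sums of witnesses yields closure under addition, and closure under left and right composition is immediate from the definition. The decisive step is the torsion characterization in Lemma \ref{charai}(a): given $\varphi\psi = g\alpha h$ with $\psi$ an epimorphism and $\alpha \in \mathcal{C}$, one must exhibit $\varphi$ as factoring through some object of $\mathcal{C}$. My plan is a two-step pushout construction. First, form the pushout of the span $X \xleftarrow{h} L \xrightarrow{\psi} M$, yielding $P$ with maps $h'\colon X\to P$ (epi, since $\psi$ is) and $\psi'\colon M\to P$; the universal property applied to the compatible cocone $(g\alpha,\varphi)$ produces a unique $g'\colon P\to N$ with $g'\psi'=\varphi$ and $g'h'=g\alpha$. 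Second, form the pushout of the span $P \xleftarrow{h'} X \xrightarrow{\alpha} Y$, yielding $Q$ with maps $\bar\alpha\colon P\to Q$ and $h''\colon Y\to Q$ (also epi, since $h'$ is). The pair $(h',h'')\colon \alpha\to\bar\alpha$ is then an epimorphism in $\morA$, so $\bar\alpha\in\mathcal{C}$ by epi-closure. A second application of the universal property, to the compatible cocone $(g',g)$, produces $g''\colon Q\to N$ with $g''\bar\alpha = g'$, giving $\varphi = g''\bar\alpha\psi' \in \langle\mathcal{C}\rangle$.

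The principal obstacle is in the converse: the naive attempt to use $\alpha$ itself as the direct witness for $\varphi$ fails, because $h$ and $g$ in the factorization $\varphi\psi = g\alpha h$ need not be epis, so $\alpha$ is not visibly a quotient of anything containing $\varphi$ in $\morA$. The double pushout is the minimal adjustment that replaces $\alpha$ by a controlled quotient $\bar\alpha$ still lying in $\mathcal{C}$ and which can then be threaded through $\psi'$ to recover $\varphi$. Part (b) is then carried out by dualizing throughout: exchange epi with mono, Lemma \ref{charai}(a) with Lemma \ref{charai}(b), and both pushouts with pullbacks.
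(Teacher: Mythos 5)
Your proposal is correct. The forward direction is exactly the paper's argument: an epimorphism in $\morA$ out of a morphism in $\mathcal{I}$ is a commuting square with both components epic, and Lemma \ref{charai}(a) applied to the relation $\varphi' f = g\varphi \in \mathcal{I}$ forces the target into $\mathcal{I}$. Where you diverge is the converse. The paper, starting from $\varphi f = \varphi_1 \alpha \varphi_2$ with $f$ epic and $\alpha \in \mathcal{C}$, corestricts to images: it maps $\alpha$ onto the inclusion $\iota \colon \im \varphi_1\alpha \rightarrow \im \varphi_1$ via the canonical projections, notes that this is an epimorphism in $\morA$ so $\iota \in \mathcal{C}$, and then uses $\im \varphi = \im \varphi f \subseteq \im \varphi_1\alpha$ to factor $\varphi$ through $\iota$. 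You instead build the replacement object by two successive pushouts, first along the epimorphism $\psi$ and then along $\alpha$, obtaining $\bar\alpha \in \mathcal{C}$ (since $(h',h'')$ has epic components, using that pushouts preserve epimorphisms and that epimorphisms in $\morA$ are detected componentwise) and recovering $\varphi = g''\bar\alpha\psi'$ purely from universal properties. Both arguments rest on the same pivot, namely Lemma \ref{charai}(a); the paper's image construction is shorter and more concrete (and its witness $\iota$ is even a monomorphism), while your pushout version trades image-containment bookkeeping for universal-property diagram chases and makes the "epimorphic image of $\alpha$ in $\morA$" idea more structural. The dualization you indicate for (b), swapping epi/mono, Lemma \ref{charai}(a)/(b), and pushouts for pullbacks, matches the paper's "similar to (a)".
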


\begin{proof} (a) Let $\varphi \colon M \rightarrow N$ be in $\mathcal{I}$. An epimorphism in $\morA$ starting in $\varphi$ is given by a commutative square of morphisms
\begin{equation*}
    \begin{tikzcd}
        M \arrow[d, "f", swap] \arrow[r, "\varphi"] & N \arrow[d,"g"]\\
        M' \arrow[r, "\varphi'"] & N'
    \end{tikzcd}
\end{equation*}
with $f$ and $g$ surjective. Then $\varphi' f = g \varphi \in \mathcal{I}$. Hence,  $\varphi' \in \mathcal{I}$ by Lemma \ref{charai}. It follows that $\mathcal{I}$ is an epi-closed class in $\morA$.

Let $\mathcal{C}$ be an epi-closed class in $\morA$ and $\mathcal{I}$ the collection of all morphisms factoring through a morphism in $\mathcal{C}$. Clearly $\mathcal{I}$ is an ideal. To see that $\mathcal{I}$ is a torsion ideal, we make use of Lemma \ref{charai}. Let $\varphi \colon L \rightarrow N$ be a morphism and $f\colon M \rightarrow L$ an epimorphism with $\varphi f\in \mathcal{I}$. Then $\varphi f $ factors as $\varphi_1 \alpha \varphi_2$ for some $\alpha\colon M' \rightarrow N'$ in $\mathcal{C}$, $\varphi_1\colon N' \rightarrow N$ and $\varphi_2 \colon M \rightarrow M'$. Now consider the commutative square 
\begin{equation*}
\begin{tikzcd}
    M' \arrow[r, "\alpha"] \arrow[d, "\pi_1", swap] & N'\arrow[d, "\pi_2"]\\
    \im \varphi_1 \alpha \arrow[r, "\iota"] & \im \varphi_1
\end{tikzcd}
\end{equation*}
where $\pi_1 \colon  M'\rightarrow \im \varphi_1\alpha, \pi_2 \colon N'\rightarrow \im \varphi_1$ are the canonical projections and $\iota \colon \im \varphi_1\alpha \rightarrow \im \varphi_1$ the canonical inclusion. Because $\mathcal{C}$ is epi-closed and $\alpha \in \mathcal{C}$, we conclude that $\iota \in \mathcal{C}$. Now $\im \varphi = \im \varphi f = \im \varphi_1 \alpha \varphi_2 \subseteq \im \varphi_1 \alpha$. Hence,  $\varphi$ factors through $\iota \in \mathcal{C}$ and so $\varphi \in \mathcal{I}$. Thus,   $\mathcal{I}$ is a torsion ideal.

(b) Similar to (a).
\end{proof}

\begin{prop}\begin{itemize}\label{gen}\item[\rm (a)] An ideal $\mathcal{I}$ is a functorially finite torsion ideal if and only if there exists $\varphi \in \morA$ such that $\mathcal{I}$ equals the collection of all morphisms factoring through a morphism in $\gen \varphi$.
    \item[\rm (b)] An ideal $\mathcal{J}$ is a functorially finite torsion-free ideal if and only if there exists $\psi \in \morA$ such that $\mathcal{J}$ equals the collection of all morphisms factoring through a morphism in $\cogen \psi$.
\end{itemize}
\end{prop}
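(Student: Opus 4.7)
The plan for (a) is to show that the functorially finite torsion ideals are precisely the ideals ${\rm I}(\langle\varphi\rangle)$ for $\varphi\in\morA$, and to identify each such ideal with the collection of morphisms factoring through an object of $\gen\varphi$; part (b) will then follow by dualising via Lemma~\ref{dual}. I would begin with the identification. By Lemma~\ref{coronot}(a), the epi-closed class $\gen\varphi$ of $\morA$ yields a torsion ideal $\mathcal{K}$ in $\modA$, namely the collection of morphisms factoring through some object of $\gen\varphi$. Since $\mathcal{K}$ contains $\varphi$, it contains $\langle\varphi\rangle$ and hence ${\rm I}(\langle\varphi\rangle)$. Conversely, ${\rm I}(\langle\varphi\rangle)$ is itself an epi-closed class of $\morA$ by Lemma~\ref{coronot}(a), so it contains $\gen\varphi$, and being an ideal it contains every morphism factoring through $\gen\varphi$. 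Thus $\mathcal{K}={\rm I}(\langle\varphi\rangle)$.

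The core technical step is to verify that ${\rm I}(\langle\varphi\rangle)$ is functorially finite. By Corollary~\ref{coro}(a) it suffices to produce a left $\langle\varphi\rangle$-approximation of $A$. Writing $\varphi\colon A_0\to A_1$ and choosing $k$-module generators $\alpha_1,\dots,\alpha_n$ of $\Hom(A,A_0)\cong A_0$, I would set $h=(\varphi\alpha_1,\dots,\varphi\alpha_n)\colon A\to A_1^n$. Then $h\in\langle\varphi\rangle$, and an arbitrary morphism $A\to X$ in $\langle\varphi\rangle$ is a finite sum of morphisms $g_j\varphi f_j$ with $f_j\colon A\to A_0$ and $g_j\colon A_1\to X$; expressing each $f_j$ as a $k$-combination $\sum_i\lambda_{ji}\alpha_i$ and using that $k$ acts centrally on $A_1$, one obtains $g_j\varphi f_j=(g_j\mu_j)h$ with $\mu_j=(\lambda_{j1},\dots,\lambda_{jn})\colon A_1^n\to A_1$ an $A$-linear map, so the total sum factors through $h$. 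This centrality argument is the main point where one must be careful.

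For the forward direction of (a), let $\mathcal{I}$ be a functorially finite torsion ideal. By Theorem~\ref{ffiff} the module $A$ admits a left $\mathcal{I}$-approximation $\varphi\colon A\to M$, which we regard as an object of $\morA$. Clearly ${\rm I}(\langle\varphi\rangle)\subseteq\mathcal{I}$. For the reverse, given $\alpha\colon L\to N$ in $\mathcal{I}$ and any $f\colon A\to L$, the composition $\alpha f$ lies in $\mathcal{I}(A,N)$ and hence factors through $\varphi$, so $\alpha f\in{\rm I}(\langle\varphi\rangle)$; since ${\rm I}(\langle\varphi\rangle)$ is right $A$-determined by Proposition~\ref{adet}, this gives $\alpha\in{\rm I}(\langle\varphi\rangle)$. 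Combining with the first paragraph yields $\mathcal{I}=\mathcal{K}$, and part (b) follows by applying the duality $D$ (Lemma~\ref{dual}) to exchange torsion with torsion-free, $\gen$ with $\cogen$, and left $\mathcal{I}$-approximations of $A$ with right $\mathcal{J}$-approximations of $DA^{\op}$.
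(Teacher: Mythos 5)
Your proposal is correct, but it takes a genuinely different route from the paper's proof, especially in the ``if'' direction. The paper identifies $\morA$ with $\mod B$ for the triangular matrix algebra $B$, invokes Proposition~\ref{fff} to conclude that $\gen \varphi$ is functorially finite in $\morA$, and then converts a left $\gen \varphi$-approximation of $1_M$ into a left $\mathcal{I}$-approximation of $M$ for \emph{every} $M\in \modA$; you instead reduce to the single module $A$ via Corollary~\ref{coro}(a) and construct a left approximation of $A$ with respect to the ideal $\langle \varphi \rangle$ generated by $\varphi$ by hand, using $k$-module generators of $\Hom(A,A_0)$ and the centrality of $k$ in $A$ --- a correct and more elementary argument (the only point worth making explicit is that, before applying epi-closedness of $\textnormal{I}(\langle\varphi\rangle)$ in $\morA$, you need $\add \varphi \subseteq \textnormal{I}(\langle\varphi\rangle)$, which holds because ideals are closed under finite direct sums and direct summands of morphisms in $\morA$). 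In the ``only if'' direction the paper factors an arbitrary $f\in \mathcal{I}$ explicitly through an epimorphic image of $\varphi^n$ in $\morA$ using an epimorphism $A^n \rightarrow M$, whereas you identify the ideal of morphisms factoring through $\gen\varphi$ with $\textnormal{I}(\langle\varphi\rangle)$ and then appeal to right $A$-determinedness (Proposition~\ref{adet}); both work. What the paper's route buys is an explicit left $\mathcal{I}$-approximation of every module and the direct link to Proposition~\ref{fff}; what yours buys is brevity and the clean reformulation $\mathcal{I} = \textnormal{I}(\langle\varphi\rangle)$ for $\varphi$ a left $\mathcal{I}$-approximation of $A$. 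Your treatment of (b) by the duality $D$ is consistent with the paper, which simply dualizes the argument.
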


\begin{proof} (a) Let $\mathcal{I}$ be a functorially finite torsion ideal and $\varphi \colon A \rightarrow C_A$ a left $\mathcal{I}$-approximation. We show that $\mathcal{I}$ equals the collection of all morphisms factoring through a morphism in $\gen \varphi$. First $\gen \varphi \subseteq \mathcal{I}$ by Lemma \ref{coronot}. Let $f\colon M \rightarrow N$ be in $\mathcal{I}$ and $\pi \colon A^n \rightarrow M$ an epimorphism for some $n\in \mathbbm{N}$. Then $f \pi $ factors through $\varphi^n$. Hence, there exists $g \colon C_A^n \rightarrow N$ with $f \pi = g \varphi^n$. It follows that $\im f = \im f\pi = \im g \varphi^n \subseteq \im g$. Thus, there exists a commutative diagram of morphisms
\begin{equation*}
    \begin{tikzcd}
        A^n \arrow[d, "\pi", swap] \arrow[r, "\varphi^n"] &C_A^n \arrow[d, "\pi'"]\\
        M \arrow[r,"f_1"] &\im g \arrow[r, "f_2"] & N
    \end{tikzcd}
\end{equation*}
where $\pi'\colon C_A^n \rightarrow \im g$ denotes the canonical projection and $f = f_2 f_1$. Clearly $f_1 \in \gen \varphi$. It follows that $f$ factors through a morphism in $\gen \varphi$ and $\mathcal{I} = \gen \varphi$.

Let $\mathcal{I}$ equal the collection of all morphisms factoring through a morphism in $\mathcal{C} = \gen \varphi$ for some $\varphi \in \morA$. Then $\mathcal{I}$ is a torsion ideal by Lemma \ref{coronot}. Further, $\mathcal{C}$ is functorially finite in $\morA$ by Proposition \ref{fff}. Hence, for $M\in \modA$ there exists a left $\mathcal{C}$-approximation of $1_M$, given by a commutative square of morphisms
\begin{equation*}
\begin{tikzcd}
    M \arrow[d, "f_1", swap] \arrow[r, equals] &M\arrow[d, "f"] \\
    C_A \arrow[r, "f_2"] & D_A.
\end{tikzcd}
\end{equation*}
We show that $f$ is a left $\mathcal{I}$-approximation. First $f\in \mathcal{I}$ since $f_2 \in \mathcal{C}$ and $f = f_2f_1$. Now let $g\colon M \rightarrow N$ be a morphism factoring through a morphism $\alpha \colon M' \rightarrow N'$ in $\mathcal{C}$. Then $g = g_2 \alpha g_1$ for $g_1 \colon M \rightarrow M'$ and $g_2 \colon N' \rightarrow N$. We consider the commutative square
\begin{equation*}
\begin{tikzcd}
    M \arrow[d, "g_1", swap] \arrow[r, equals] &M\arrow[d, "\alpha g_1"] \\
    M' \arrow[r, "\alpha"] & N'.
\end{tikzcd}
\end{equation*}
It must factor through the left $\mathcal{C}$-approximation of $1_M$ from above, so there exists a commutative diagram of morphisms
\begin{equation*}
\begin{tikzcd}
    M \arrow[d, "f_1", swap] \arrow[r, equals] &M\arrow[d, "f"] \\
    C_A \arrow[r, "f_2"] \arrow[d, "h_1", swap] & D_A \arrow[d, "h_2"]\\
    M' \arrow[r, "\alpha"] & N'
\end{tikzcd}
\end{equation*}
with $g_1 = h_1 f_1$. In total $g = g_2 \alpha g_1 = g_2 \alpha h_1 f_1 = g_2 h_2 f$. It follows that $f$ is a left $\mathcal{I}$-approximation, so $\mathcal{I}$ is functorially finite.

(b) Similar to (a)
\end{proof}
\section{Transfinite powers of the radical and related ideal torsion pairs}

Preprojective modules were first introduced by Dlab and Ringel \cite{Dlab} for finite dimensional tensor algebras and later generalized by Auslander and Smal{\o} for arbitrary Artin algebras \cite{AuslanderSmalo}. Our aim is to extend the class of preprojective modules using the theory of ideal torsion pairs. Let us mention that Krause also extended the class of preprojective modules in \cite{Krause}, using a different approach. In what follows, all results can be dualized leading to an extension of the class of preinjective modules.

Following Prest \cite{Prest}, we define the notion of transfinite powers of an ideal $\mathcal{I}$ of $\modA$: For $n \in \mathbbm{N}$ let $\mathcal{I}^n$ denote the collection of all $n$-fold compositions of morphisms in $\mathcal{I}$ (in particular $\mathcal{I}^0 = \Hom$). If $\lambda$ is a non-zero limit ordinal, let $\mathcal{I}^\lambda = \bigcap_{\alpha < \lambda} \mathcal{I}^\lambda$. If $\alpha$ is an arbitrary infinite ordinal, then $\alpha = \lambda + n$ for a limit ordinal $\lambda$ and $n\in \mathbb{N}$, and we let $\mathcal{I}^\alpha = (\mathcal{I}^\lambda)^{n+1}$. Lastly, we define $\mathcal{I}^\infty = \bigcap_{\alpha} \mathcal{I}^\alpha$. The case $\mathcal{I} = \radA$ will be most important to us.

\begin{rem}\label{local}\rm From the definition of $\mathcal{I}^\alpha$, it is not hard to see that $\mathcal{I}^\alpha$ is an ideal for all ordinal numbers $\alpha$. This yields a descending chain of ideals 
\begin{align*}
    \Hom \supseteq \mathcal{I} \supseteq \mathcal{I}^2 \supseteq \dots \supseteq \mathcal{I}^\alpha \supseteq \mathcal{I}^{\alpha+1} \supseteq \dots
\end{align*}
For $M,N\in \modA$ the expression $\mathcal{I}^\alpha (M,N)$ is a submodule of the finitely generated $k$-module $\Hom (M,N)$. Thus,   if $\lambda$ is a limit ordinal (or $\infty$), then $\mathcal{I}^\lambda (M,N) = \bigcap_{\alpha < \lambda} \mathcal{I}^\alpha (M,N) = \mathcal{I}^\alpha (M,N)$ for some $\alpha < \lambda$.
\end{rem}


The \emph{projective rank} of a module $M \in \modA$, denoted by $\prk M$, is the smallest ordinal number $\alpha$ (or $\infty$) with $M\in \ob \rm{I}(\radA^\alpha)$. Now $M \in \ob \rm{I}(\radA^\alpha)$ is equivalent to $\varphi \in \radA^\alpha$ for all $\varphi \colon A\rightarrow M$ by Remark \ref{smallt}. Hence, $\prk M$ equals the smallest $\alpha$ such that there exists $\varphi \colon A \rightarrow M$ in $\radA^\alpha \backslash \radA^{\alpha+1}$ (or $\infty$ if no such $\alpha$ exists). In particular if $M$ is indecomposable, then $\prk M = 0$ is equivalent to $M$ being projective, since exactly in that case there exists a split epimorphism $A\rightarrow M$.

\begin{rem}\label{calc} \rm Let $\alpha$ be an ordinal number (or $\infty$). Then $\ob \rm{I}(\radA^\alpha)$ is an epi-closed class by Lemma \ref{prooop}. This has the following immediate consequences:
\begin{itemize}
    \item[(i)] For every epimorphism $M \rightarrow N$ in $\modA$ we have $\prk M \leq \prk N$. 
    \item[(ii)] For $M, N \in \modA$ we have $\prk M \oplus N = \min \{\prk M, \prk N\}$.
\end{itemize}
\end{rem}

Recall that an indecomposable module $X\in \modA$ is \emph{preprojective} if there exists a finite collection $\mathcal{C}$ of indecomposable modules with the following property: If there exists an epimorphism $M\rightarrow X$, then $M$ must have a direct summand isomorphic to a module in $\mathcal{C}$ (see \cite{AuslanderSmalo}). In what follows, we show that the indecomposable preprojective modules are precisely those of finite projective rank.

\begin{lem}\label{lem} The ideal $\radA^n$ is functorially finite for all $n \in \mathbbm{N}$. Moreover, if $\varphi \colon A\rightarrow C_A$ is a left minimal $\radA^n$-approximation, then an indecomposable module $X\in \modA$ is isomorphic to a direct summand of $C_A$ if and only if there exists $\psi \colon A \rightarrow X$ in $\radA^{n}\backslash\radA^{n+1}$.
\end{lem}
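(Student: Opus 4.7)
The plan is to prove $\radA^n$ is functorially finite by induction on $n$ and then deduce the summand characterization using the left-minimality of $\varphi$.

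For the first claim I would induct on $n$. The base cases are trivial: $\radA^0 = \Hom$ admits identities as approximations, and $\radA^1 = \radA$ is functorially finite by Proposition \ref{radical}. For the inductive step, given $M \in \modA$, I would pick a left $\radA^n$-approximation $\varphi_n \colon M \to C$ and a left $\radA$-approximation $\varphi \colon C \to C'$; the composition $\varphi\varphi_n$ lies in $\radA^{n+1}$, and any $f \in \radA^{n+1}(M,X)$, written as $f = f_1 f_2$ with $f_1 \in \radA$ and $f_2 \in \radA^n$, factors through $\varphi\varphi_n$ by first factoring $f_2$ through $\varphi_n$ and then the resulting radical morphism through $\varphi$. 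Contravariant finiteness is dual.

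For the $(\Leftarrow)$ direction of the moreover part, assume $\psi \colon A \to X$ lies in $\radA^n \setminus \radA^{n+1}$. The approximation property yields $g \colon C_A \to X$ with $\psi = g\varphi$. Decomposing $C_A = \bigoplus_i C_i$ into indecomposable summands with projections $\pi_i$, write $g = \sum_i g_i \pi_i$ where $g_i \colon C_i \to X$. If no $C_i$ were isomorphic to $X$, every $g_i$ would lie in $\radA$ (since morphisms between non-isomorphic indecomposables are radical), forcing $g \in \radA$ and $\psi = g\varphi \in \radA^{n+1}$, a contradiction. Hence some $C_i \cong X$.

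For the $(\Rightarrow)$ direction, suppose $C_A = X \oplus X'$ with projection $\pi_X$ and inclusion $\iota_{X'}$, and set $\psi := \pi_X \varphi \in \radA^n$. Assuming the contrary $\psi \in \radA^{n+1}$, write $\psi = ab$ with $a \in \radA$, $b \in \radA^n$; factoring $b = b'\varphi$ produces $\psi = h\varphi$ with $h := ab' \in \radA(C_A, X)$. Expanding $h = (h_1, h_2)$ relative to $C_A = X \oplus X'$, the entry $h_1 \in \radA(X,X) = J(\textnormal{End}_A(X))$ since $X$ is indecomposable, so $1 - h_1$ is a unit and $\alpha := \begin{pmatrix} 1-h_1 & -h_2 \\ 0 & 1_{X'} \end{pmatrix}$ is an automorphism of $C_A$. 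Writing $\varphi = \begin{pmatrix}\psi \\ \varphi'\end{pmatrix}$, the relation $h\varphi = \psi$ gives $\alpha \varphi = \begin{pmatrix} 0 \\ \varphi' \end{pmatrix} = \iota_{X'} \varphi'$. Hence $\varphi = (\alpha^{-1} \iota_{X'}) \varphi'$ factors through the split monomorphism $\alpha^{-1} \iota_{X'}$, which is not an isomorphism because $X \neq 0$; this contradicts the left minimality of $\varphi$ via the standard retraction argument. The main obstacle is precisely this $(\Rightarrow)$ direction: constructing the automorphism $\alpha$ to annihilate the $X$-component of $\varphi$ hinges on the locality of $\textnormal{End}_A(X)$ to make $1 - h_1$ invertible.
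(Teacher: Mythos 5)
Your proposal is correct and takes essentially the same route as the paper: functorial finiteness of $\radA^n$ by iterating left $\radA$-approximations, and the summand criterion via the approximation property combined with left minimality of $\varphi$. The only cosmetic difference is in the ``only if'' direction, where you invoke locality of $\textnormal{End}_A(X)$ to build an explicit automorphism killing the $X$-component, whereas the paper applies left minimality directly to $\iota_X gh+\iota_Y\pi_Y$ and derives the contradiction $1_X\in\radA$; both are the same minimality argument in slightly different packaging.
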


\begin{proof} By Proposition \ref{radical} the ideal $\radA$ is functorially finite. For $M\in \modA$ consider a sequence
\begin{align*}
    M = M_0 \xrightarrow{f_1} M_1 \xrightarrow{f_2} \dots \xrightarrow{f_n} M_{n}
\end{align*}
of left $\radA$-approximations $M_i \rightarrow M_{i+1}$. We show that $f = f_n f_{n-1} \dots f_1$ is a left $\radA^n$-approximation. Clearly $f\in \radA^n$. Let $g\in \radA^n$ starting in $M$. Then $g$ factors as $g_n g_{n-1} \dots g_1$ with $g_i\colon N_{i-1} \rightarrow N_i$ in $\radA$ and $N_0 = M$. Because $f_i$ it a left $\radA$-approximation for all $i$, we obtain a commutative diagram
\begin{equation*}
    \begin{tikzcd}
        M \arrow[r, "f_1"] \arrow[d, "g_1", swap] & M_1 \arrow[r, "f_2"] \arrow[dl] & \dots  \arrow[r, "f_m"] & M_m \arrow[dddlll]\\
        N_1 \arrow[d, "g_2",swap] & \ddots \\
        \vdots \arrow[d, "g_m", swap] \\
        N_m.
    \end{tikzcd}
\end{equation*}
It follows that $g$ factors through $f$. Hence, $f$ is a left $\radA^n$-approximation and the ideal $\radA^n$ is covariantly finite. Similarly $\radA^n$ is contravariantly finite and thus functorially finite.

Let $X\in \modA$ be indecomposable such that there exists $\psi \colon A\rightarrow X$ in $\radA^n \backslash \radA^{n+1}$. Then $\psi$ factors through the left $\radA^n$-approximation $\varphi \colon A \rightarrow C_A$, so $\psi = g \varphi$ for $g \colon C_A \rightarrow X$. Further, the morphism $g$ can not be radical, as otherwise $\psi = g \varphi \in \radA^{n+1}$. It follows that $g$ is a split epimorphism, since $X$ is indecomposable. Hence, $X$ is a direct summand of $C_A$.

Let $C_A = X\oplus Y$ with $X$ indecomposable, let $\pi_X \colon C_A \rightarrow X, \pi_Y \colon C_A \rightarrow Y$ be the canonical projections and $\iota_X \colon X \rightarrow C_A, \iota_Y \colon Y \rightarrow C_A$ the canonical inclusions. For a contradiction suppose that $\radA^n(A,X) = \radA^{n+1}(A,X)$. Then $\pi_X \varphi \in \radA^{n+1}$ factors as $\pi_X \varphi = g f$ for $f \in \radA^n$ and $g \in \radA$. Now $f$ factors through the left minimal $\radA^n$-approximation $\varphi$, so $f = h \varphi$. Hence,
\begin{align*}
    \varphi = (\iota_X \pi_X + \iota_Y \pi_Y)\varphi = (\iota_X gh+\iota_Y \pi_Y) \varphi.
\end{align*}
Because $\varphi$ is left minimal, the morphism $\iota_X gh+\iota_Y \pi_Y$ is an isomorphism. Thus, there exists $\psi \colon C_A \rightarrow C_A$ with $\psi(\iota_X gh+\iota_Y \pi Y) = 1_{C_A}$. We conclude that
\begin{align*}
    1_X = \pi_X 1_{C_A} \iota_X = \pi _X \psi(\iota_X gh+\iota_Y \pi_Y) \iota_X = \pi _X \psi \iota_X gh \iota_X.
\end{align*}
Now $g \in \radA$ implies $1_X \in \radA$, which is a contradiction.
\end{proof}

\begin{coro}\label{cororo} Let $X\in \modA$ be indecomposable and $A \rightarrow C_{n}$ a left minimal $\radA^n$-approximation for all $n\in \mathbbm{N}$. Then $X$ can only be isomorphic to a direct summand of $C_{n}$ for finitely many $n$.
\end{coro}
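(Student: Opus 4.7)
My plan is to combine the characterization from Lemma \ref{lem} with the fact that descending chains of $k$-submodules of $\Hom(A,X)$ must stabilize, which was already observed in Remark \ref{local}.

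First, apply the second half of Lemma \ref{lem}: the indecomposable module $X$ is isomorphic to a direct summand of $C_n$ if and only if there exists a morphism $\psi\colon A\to X$ in $\radA^n\setminus\radA^{n+1}$, equivalently, if and only if
\begin{align*}
\radA^n(A,X) \neq \radA^{n+1}(A,X).
\end{align*}
So it suffices to show that this strict inequality can hold only for finitely many $n\in\mathbbm{N}$.

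Next, observe that $\Hom(A,X)$ is a finitely generated $k$-module over the commutative Artinian ring $k$, hence is itself an Artinian $k$-module. The descending chain of $k$-submodules
\begin{align*}
\Hom(A,X) \supseteq \radA(A,X) \supseteq \radA^2(A,X) \supseteq \cdots \supseteq \radA^n(A,X) \supseteq \cdots
\end{align*}
(noted in Remark \ref{local}) must therefore stabilize at some finite step: there exists $N\in\mathbbm{N}$ such that $\radA^n(A,X) = \radA^N(A,X)$ for all $n\geq N$.

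In particular, for every $n\geq N$ we have $\radA^n(A,X) = \radA^{n+1}(A,X)$, so by the characterization above $X$ cannot be a direct summand of $C_n$ for such $n$. Thus $X$ is a direct summand of $C_n$ only possibly for $n\in\{0,1,\dots,N-1\}$, which is a finite set. There is no real obstacle here; the argument is essentially a direct combination of the previous lemma with the Artinian property already recorded in Remark \ref{local}.
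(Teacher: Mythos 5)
Your argument is correct and is essentially the paper's proof: Lemma \ref{lem} translates ``$X$ is a summand of $C_n$'' into a proper inclusion $\radA^{n+1}(A,X) \subsetneq \radA^n(A,X)$, and the finite length (equivalently, the Artinian property) of the $k$-module $\Hom(A,X)$ forces the descending chain to stabilize, so only finitely many $n$ can occur. The paper phrases this as a contradiction with infinitely many proper inclusions, but the content is identical.
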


\begin{proof} Suppose that $X$ is a direct summand of infinitely many $C_{n}$. Then the sequence $\radA(A,X) \supseteq \radA^2(A,X) \supseteq \dots$ of submodules of the $k$-module $\Hom(A,X)$ would have infinitely many proper inclusions by Lemma \ref{lem}, which contradicts that $\Hom(A,X)$ is of finite length as a $k$-module.
\end{proof}

\begin{prop}\label{preproj} Let $X\in \modA$ be indecomposable. The projective rank of $X$ is finite if and only if $X$ is preprojective.
\end{prop}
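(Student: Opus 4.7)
The plan is to prove both implications by combining the epimorphism-monotonicity of $\prk$ recorded in Remark \ref{calc} with Lemma \ref{lem} and Corollary \ref{cororo} (together with their duals for right approximations).

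For the forward direction, suppose $\prk X = n < \infty$ and, for each $0 \leq m \leq n$, let $\varphi_m \colon A \to C_m$ be a left minimal $\radA^m$-approximation, which exists by Lemma \ref{lem}. I would take $\mathcal{C}$ to be the finite collection of all indecomposable direct summands of $C_0 \oplus C_1 \oplus \cdots \oplus C_n$. Given an arbitrary epimorphism $f \colon M \to X$, Remark \ref{calc}(i) yields $\prk M \leq \prk X = n$, and Remark \ref{calc}(ii) lets me pick an indecomposable direct summand $M_i$ of $M$ with $m' := \prk M_i = \prk M \leq n$. The definition of projective rank then provides a morphism $A \to M_i$ lying in $\radA^{m'} \setminus \radA^{m'+1}$, so Lemma \ref{lem} forces $M_i$ to be a direct summand of $C_{m'}$, hence $M_i \in \mathcal{C}$. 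Thus $M$ has a direct summand in the finite collection $\mathcal{C}$, and $X$ is preprojective.

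For the converse, suppose $X$ is preprojective with finite witness $\mathcal{C}$, and assume for contradiction that $\prk X = \infty$, so that every morphism $A \to X$ lies in $\radA^m$ for every $m \in \mathbbm{N}$. The dual of Lemma \ref{lem} supplies right minimal $\radA^m$-approximations $\psi_m \colon E_m \to X$ together with the analogous summand criterion: an indecomposable $Y$ is a direct summand of $E_m$ if and only if $\radA^m(Y,X) \neq \radA^{m+1}(Y,X)$. Fixing a projective cover $\pi \colon A^k \to X$, each component $A \to X$ of $\pi$ belongs to $\radA^m$ by hypothesis and therefore factors through $\psi_m$; assembling these factorisations exhibits $\pi$ itself as factoring through $\psi_m$, forcing $\psi_m$ to be an epimorphism. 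The preprojective assumption then implies that $E_m$ has a direct summand in $\mathcal{C}$ for every $m$, so by the pigeonhole principle some indecomposable $Y \in \mathcal{C}$ is a direct summand of $E_m$ for infinitely many $m$. This contradicts the dual of Corollary \ref{cororo}: the descending chain $\radA^m(Y,X)$ of submodules of the finitely generated $k$-module $\Hom(Y,X)$ stabilises, so $Y$ can be a summand of only finitely many $E_m$.

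The main obstacle I anticipate is not the logical skeleton but the verification that Lemma \ref{lem} and Corollary \ref{cororo} dualise to right minimal $\radA^m$-approximations; this is routine, however, since both proofs ultimately rest on the finiteness of $\Hom$-groups as $k$-modules, a property symmetric in source and target. Once this dualisation is in place, the two halves of the argument fit together as above.
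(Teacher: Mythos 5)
Your proof is correct, and while your forward direction is essentially the paper's, your converse takes a different route. For the forward implication, your class $\mathcal{C}$ of indecomposable direct summands of $C_0\oplus\dots\oplus C_n$ coincides (via Lemma \ref{lem}) with the paper's class of indecomposables of projective rank at most $n$, and the contradiction via Remark \ref{calc} is the same argument. For the converse, the paper stays with left approximations of $A$: by Corollary \ref{cororo} it picks $n$ so that the left $\radA^{n}$-approximation $A\to C_A$ has no direct summand in $\mathcal{C}$, factors an epimorphism $A^m\to X$ through $C_A^m$, and contradicts preprojectivity directly. You instead dualize, using right minimal $\radA^{m}$-approximations $E_m\to X$ of $X$ itself, showing each $\psi_m$ is an epimorphism and then running a pigeonhole argument against the dual of Corollary \ref{cororo}. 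Both arguments work; the paper's has the advantage of only invoking statements literally proved in the text, whereas yours requires the dual of the summand criterion in Lemma \ref{lem}, which is stated there only for left minimal approximations of the module $A$. As you note, this dualization is routine: the proof of Lemma \ref{lem} never uses that the source is the regular module, so one may generalize the source and then apply the duality $D$ (or argue dually from scratch), but it is an extra verification the paper's route avoids. One small imprecision: you assume ``$\prk X = \infty$'' for contradiction, whereas the correct hypothesis is that $\prk X$ is not finite (it could be an infinite ordinal such as $\omega$); this is harmless, since the only consequence you use --- that every morphism $A\to X$ lies in $\radA^{m}$ for all $m\in\mathbbm{N}$ --- is exactly what non-finite projective rank provides.
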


\begin{proof} Let $\prk X = n \in \mathbbm{N}$ and $\mathcal{C}$ be the collection of all indecomposable modules $Y \in \modA$ with $\prk Y \leq n$. Then for all $Y\in \mathcal{C}$ there exists $A \rightarrow Y$ in $\radA^{m}\backslash \radA^{m+1}$ for some $m\leq n$. Thus, by Lemma \ref{lem}, the number of isomorphism types of modules in $\mathcal{C}$ is finite. Now if there exists an epimorphism $M\rightarrow X$ such that $M$ does not have a direct summand isomorphic to a module in $\mathcal{C}$, then
\begin{align*}
    \prk X = n \geq \prk M > n
\end{align*}
by Remark \ref{calc}. This is a contradiction. It follows that $X$ is preprojective.

Let $X$ be preprojective and $\mathcal{C}$ a finite collection of indecomposable modules such that for every epimorphism $M\rightarrow X$ the module $M$ has a direct summand isomorphic to a module in $\mathcal{C}$. By Corollary \ref{cororo} there exists $n\in \mathbbm{N}$ such that for a left $\radA^{n}$-approximation $A\rightarrow C_A$ the module $C_A$ does not contain a direct summand isomorphic to a module in $\mathcal{C}$. For a contradiction suppose that $X$ has non-finite projective rank. Then an epimorphism $A^m\rightarrow X$ is contained in $\radA^n$ so it must factor through $A^m \rightarrow C_A^m$. This yields an epimorphism $C_A^m \rightarrow X$, which is a contradiction since $C_A^m$ does not have a direct summand isomorphic to a module in $\mathcal{C}$.
\end{proof}

We continue by investigating the modules of non-finite projective rank. It is not clear for which ordinal number $\alpha$ (or $\infty$) there exists a non-zero module $X\in \modA$ of projective rank $\alpha$. In what follows, we show a criteria for this.

\begin{lem}\label{inf} Let $\mathcal{I}$ be an ideal of $\modA$ and $\lambda \neq 0$ a limit ordinal (or $\infty$). Then ${\rm I}(\mathcal{I}^\lambda) = \langle \ob {\rm I}(\mathcal{I}^\lambda) \rangle$. In particular $\ob {\rm I}(\mathcal{I}^\lambda)  \neq 0$ if and only if $\mathcal{I}^\lambda \neq 0$.
\end{lem}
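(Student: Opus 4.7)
The plan is to first reduce the equality to the containment $\mathcal{I}^\lambda \subseteq \langle \ob \textnormal{I}(\mathcal{I}^\lambda)\rangle$. By Lemma \ref{prooop} the right-hand side is already a torsion ideal (since $\ob \textnormal{I}(\mathcal{I}^\lambda)$ is epi-closed), and it is trivially contained in $\textnormal{I}(\mathcal{I}^\lambda)$; once the containment is secured, the minimality of $\textnormal{I}(\mathcal{I}^\lambda)$ among torsion ideals containing $\mathcal{I}^\lambda$ forces equality. By Proposition \ref{objfun}(a) this is equivalent to showing that the subfunctor $t$ of $\mathbbm{1}_A$ corresponding to $\textnormal{I}(\mathcal{I}^\lambda)$ satisfies $t(tM) = tM$ for every $M \in \modA$.

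Fix $M$. Using Lemma \ref{homiso} together with Remark \ref{smallt} (which identifies $\textnormal{I}(\mathcal{I}^\lambda)(A,-)$ with $\mathcal{I}^\lambda(A,-)$), the natural iso $\Hom(A,M) \cong M$ identifies $tM$ with the submodule $\mathcal{I}^\lambda(A,M) \subseteq M$, and likewise $t(tM)$ with $\mathcal{I}^\lambda(A,tM)$. The inclusion $\iota\colon tM \to M$ has image $tM$ and therefore lies in $\textnormal{I}(\mathcal{I}^\lambda)$ by Proposition \ref{subfun}(ii), so by Remark \ref{smallt} every $g \colon A \to tM$ satisfies $\iota g \in \mathcal{I}^\lambda(A,M)$. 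The task is to promote this to $g \in \mathcal{I}^\lambda(A, tM)$.

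The crucial tool is Remark \ref{local}: for every pair $(X,Y)$ of modules, the descending chain $\mathcal{I}^\alpha(X,Y)$ stabilizes at some $\alpha_{X,Y}<\lambda$ at the value $\mathcal{I}^\lambda(X,Y)$. Given $g$, I would choose an ordinal $\beta<\lambda$ past the stability point of $(A,tM)$ and expand $\iota g \in \mathcal{I}^\lambda(A,M) = \mathcal{I}^{\beta+1}(A,M)$ into a decomposition $\iota g = \sum_k h_k f_k$ through intermediate modules $Y_k$. By enlarging $\beta$ if needed (only finitely many pairs $(A,Y_k)$ appear, and each stability ordinal is strictly below the limit $\lambda$, hence so is their supremum), we force each $f_k$ to live in $\mathcal{I}^\lambda(A,Y_k)$, so that $f_k(1)\in tY_k$. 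Because $t$ is a subfunctor of $\mathbbm{1}_A$, functoriality applied to $h_k$ yields $h_k(tY_k)\subseteq tM$, allowing me to co-restrict each $f_k$ to $\widetilde f_k\colon A\to tY_k$ and restrict each $h_k$ to $\bar h_k\colon tY_k\to tM$. The composition $\sum_k \bar h_k\widetilde f_k$ then equals $g$ as a morphism $A\to tM$ and witnesses $g\in \mathcal{I}^\beta(A,tM) = \mathcal{I}^\lambda(A,tM)$, as required.

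The main obstacle is the bookkeeping in this iterative choice of $\beta$: one must simultaneously handle stability at $(A,tM)$ and at the $Y_k$ produced by the factorization, and verify that the co-restrictions are compatible and actually reproduce $g$. The argument is clean because each decomposition involves only finitely many intermediate modules, and finite suprema of ordinals below a limit $\lambda$ remain below $\lambda$. The ``in particular'' clause is then immediate: any nonzero $\varphi\in \mathcal{I}^\lambda$ with codomain $M$ produces a nonzero $tM$ which by the main statement lies in $\ob\textnormal{I}(\mathcal{I}^\lambda)$; the converse uses that $\mathcal{I}^\lambda = 0$ forces $\textnormal{I}(\mathcal{I}^\lambda) = 0$ and hence $\ob \textnormal{I}(\mathcal{I}^\lambda) = 0$.
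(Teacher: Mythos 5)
Your reduction (via Proposition \ref{objfun}(a) to showing $t(tM)=tM$, the identification $tM\cong \mathcal{I}^\lambda(A,M)$ through Lemma \ref{homiso} and Remark \ref{smallt}, and the use of the stabilization from Remark \ref{local}) is sound and matches the paper, but the promotion step contains a genuine gap. After writing $\iota g=\sum_k h_kf_k$ and co-restricting to $\widetilde f_k\colon A\to tY_k$ and $\bar h_k\colon tY_k\to tM$, you assert that $\sum_k\bar h_k\widetilde f_k$ witnesses $g\in\mathcal{I}^{\beta}(A,tM)$. Nothing supports this: membership in $\mathcal{I}^{\beta}$ (or in any power of $\mathcal{I}$) is preserved by pre- and post-composition, but not by co-restriction along a monomorphism, so neither $\bar h_k$ nor $\widetilde f_k$ is known to lie in any power of $\mathcal{I}$. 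Indeed, what you know is only that $f_k=\iota_{Y_k}\widetilde f_k\in\mathcal{I}^\lambda(A,Y_k)$, and deducing from this that $\widetilde f_k\in\mathcal{I}^\lambda(A,tY_k)$ is precisely the statement being proved, with $Y_k$ in place of $M$ -- the argument is circular at its core. There is a second, related problem: you want $\beta$ past the stability ordinals of the pairs $(A,Y_k)$, but the $Y_k$ only arise from a factorization that itself depends on $\beta$; enlarging $\beta$ produces a new factorization with new intermediate modules, and the ``only finitely many pairs appear'' remark does not break this loop.

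The paper's proof avoids both issues by never needing any control at the intermediate object. Fix $\alpha<\lambda$ past the stability points of the two \emph{fixed} pairs $(A,M)$ and $(A,tM)$ only, take an epimorphism $\pi\colon A^n\to tM$, note $\iota\pi\in\mathcal{I}^\lambda$ and factor $\iota\pi=hf$ with $h,f\in\mathcal{I}^{\alpha}$ (possible because $\lambda$ is a limit, so $\mathcal{I}^\lambda$ is contained in a two-fold power of $\mathcal{I}^{\alpha}$). Then for every $\varphi$ starting in $A$ one has $h\varphi\in\mathcal{I}^{\alpha}(A,M)=\mathcal{I}^\lambda(A,M)$, so $h\in{\rm I}(\mathcal{I}^\lambda)$ by Remark \ref{smallt}; hence $h$ factors through the right ${\rm I}(\mathcal{I}^\lambda)$-approximation $\iota\colon tM\to M$ (Remark \ref{itprem}), say $h=\iota\psi$. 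Since $\iota$ is monic, $\pi=\psi f$, and because $f\in\mathcal{I}^{\alpha}$ and $\mathcal{I}^{\alpha}$ is an ideal, $\pi\in\mathcal{I}^{\alpha}(A^n,tM)=\mathcal{I}^\lambda(A^n,tM)$; so $\pi$ factors through $t(tM)\hookrightarrow tM$, and $\pi$ being epic forces $t(tM)=tM$. In short, the repair is to replace your functorial co-restriction of the second factor by the approximation property of $\iota$, letting the first factor (which genuinely lies in $\mathcal{I}^{\alpha}$) carry the membership; stability is then needed only at $(A,M)$ and $(A,tM)$. Your ``in particular'' paragraph is fine.
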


\begin{proof} Clearly $\langle \ob {\rm I}(\mathcal{I}^\lambda)\rangle \subseteq {\rm I}(\mathcal{I}^\lambda)$. Let $M \in \modA$, $t$ the subfunctor of $\mathbbm{1}_A$ corresponding to ${\rm I}(\mathcal{I}^\lambda)$, $\iota\colon tM \rightarrow M$ the canonical inclusion and $\pi \colon A^n \rightarrow tM$ an epimorphism. Then $\iota \pi\in \mathcal{I}^\lambda$ by Remark \ref{smallt}. By Remark \ref{local} we can choose $\alpha < \lambda$ with $\mathcal{I}^\lambda (A, M) = \mathcal{I}^\alpha (A,M)$ and $\mathcal{I}^\lambda (A,tM) = \mathcal{I}^\alpha (A,tM)$. Further, $\iota \pi$ factors as $g f$ for $f, g \in \mathcal{I}^\alpha$ since $(\mathcal{I}^\alpha)^2 \subseteq \mathcal{I}^\lambda$. Now for all $\varphi$ starting in $A$ the composition $g \varphi$ is in $\mathcal{I}^\alpha(A,M) = \mathcal{I}^\lambda (A,M)$. Hence, $g \in {\rm I}(\mathcal{I}^\lambda)$ factors through $\iota$ and $g = \iota \psi$. Since $ \psi f \in \mathcal{I}^\alpha (A^n, tM) = \mathcal{I}^\lambda (A^n, tM)$, we deduce that $\psi f$ factors through the canonical inclusion $\iota' \colon t(tM) \hookrightarrow tM$. Thus,   $\iota = gf = \iota \psi f$ factors through $\iota'$ and we conclude that $t(tM) = tM$. It follows that $tM \in \ob {\rm I}(\mathcal{I}^\lambda)$ and hence ${\rm I}(\mathcal{I}) = \langle \ob {\rm I}(\mathcal{I}^\lambda) \rangle$. 

Further, if $\mathcal{I}^\lambda \neq 0$ then ${\rm I}(\mathcal{I}^\lambda) \neq 0$ and thus $\ob {\rm I}(\mathcal{I}^\lambda) \neq 0$. On the contrary, if $\ob {\rm I}(\mathcal{I}^\lambda) \neq 0$ then ${\rm I}(\mathcal{I}^\lambda) \neq 0$. In that case $\mathcal{I}^\lambda (A,-) \neq 0$ by Remark \ref{smallt}.
\end{proof}

\begin{coro}\label{exi} Let $\lambda\neq 0$ be a limit ordinal (or $\infty$). There exists a non-zero module of projective rank greater than or equal to $\lambda$ if and only if $\radA^\lambda \neq 0$.
\end{coro}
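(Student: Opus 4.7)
The plan is to observe that this corollary is a direct specialization of Lemma \ref{inf} to $\mathcal{I} = \radA$, once the condition $\prk M \geq \lambda$ is translated into a single membership condition in $\ob \textnormal{I}(\radA^\lambda)$.

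First I would unwind the definition of projective rank. Since $\radA^\alpha$, and hence $\textnormal{I}(\radA^\alpha)$ and $\ob \textnormal{I}(\radA^\alpha)$, form a decreasing chain indexed by $\alpha$, the projective rank of $M$ records how deep $M$ sits in this chain. For a limit ordinal (or $\infty$) $\lambda$, the condition $\prk M \geq \lambda$ is therefore equivalent to the single membership $M \in \ob \textnormal{I}(\radA^\lambda)$: membership at stage $\lambda$ automatically propagates to every smaller $\alpha$ by the decreasingness, and conversely such a membership already forces $M$ to have projective rank at least $\lambda$. Consequently, the existence of a non-zero module of projective rank $\geq \lambda$ is the same as saying that $\ob \textnormal{I}(\radA^\lambda)$ contains a non-zero object, i.e.\ $\ob \textnormal{I}(\radA^\lambda) \neq 0$.

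Second I would invoke Lemma \ref{inf} with $\mathcal{I} = \radA$. That lemma gives directly $\ob \textnormal{I}(\radA^\lambda) \neq 0$ if and only if $\radA^\lambda \neq 0$. Chaining this equivalence with the translation of the first step closes the corollary. I do not foresee any substantive obstacle, since the real content is already carried by Lemma \ref{inf}; the only fine point worth making explicit is that the zero module always lies in $\ob \mathcal{I}$ for any ideal $\mathcal{I}$, so '$\ob \textnormal{I}(\radA^\lambda) \neq 0$' in the lemma genuinely means 'contains a non-zero module', matching exactly the 'non-zero module' formulation in the statement.
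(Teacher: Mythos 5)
Your proposal is correct and coincides with the paper's intended argument: the corollary is stated without a separate proof precisely because it follows from Lemma \ref{inf} applied with $\mathcal{I} = \radA$, once one notes (via Remark \ref{smallt} and the fact that $\radA^\lambda(A,M) = \bigcap_{\alpha<\lambda}\radA^\alpha(A,M)$ at a limit ordinal) that $\prk M \geq \lambda$ is equivalent to $M \in \ob \textnormal{I}(\radA^\lambda)$. Your translation step and the remark about the zero module are exactly the points that make this specialization work.
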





For the finite case we have already seen that for each $n\in \mathbbm{N}$ there can only be finitely many isomorphism types of indecomposable modules of projective rank $n$. The infinite case is completely opposite. We will show that if there exists an indecomposable module of projective rank $\alpha$ for an infinite ordinal number $\alpha$ (or $\alpha = \infty$), then there exist infinitely many isomorphism types of indecomposable modules of projective rank close to $\alpha$. 

\begin{lem}\label{need} Let $\alpha$ be a non-finite ordinal number (or $\infty)$ and write $\alpha = \lambda + n$ for a limit ordinal $\lambda$ (or $\infty$) and $n\in \mathbbm{N}$. For all $M\in \modA$ with $\prk M = \alpha$ there exists a radical epimorphism $M' \rightarrow M$ with $\lambda \leq \prk M' \leq \alpha$. 
\end{lem}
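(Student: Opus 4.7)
The plan is to use Lemma \ref{inf} to factor a carefully chosen epimorphism $A^k \to M$ through an object $M' \in \ob \mathrm{I}(\radA^\lambda)$, and then to argue that the induced map $M' \to M$ (after a stripping modification if necessary) is the desired radical epimorphism.

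First, I pick an epimorphism $\pi \colon A^k \to M$ with at least one component $A \to M$ of depth exactly $\alpha$; such a component exists by the definition of $\prk M = \alpha$, so $\pi \in \radA^\alpha \setminus \radA^{\alpha+1}$. Since $\radA^\alpha = (\radA^\lambda)^{n+1} \subseteq \radA^\lambda \subseteq \mathrm{I}(\radA^\lambda)$, the morphism $\pi$ lies in $\mathrm{I}(\radA^\lambda)$. By Lemma \ref{inf} one obtains $f \colon A^k \to M'$ and $g \colon M' \to M$ with $M' \in \ob \mathrm{I}(\radA^\lambda)$ and $\pi = g f$. From this, $g$ is epimorphic, $\prk M' \geq \lambda$ holds since $M' \in \ob \mathrm{I}(\radA^\lambda)$, and $\prk M' \leq \alpha$ follows from Remark \ref{calc}(i) applied to the epi $g$ (alternatively, if every $A \to M'$ lay in $\radA^{\alpha+1}$, then $f \in \radA^{\alpha+1}$ would force $\pi \in \radA^{\alpha+1}$, contradicting the choice of $\pi$).

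The main obstacle is guaranteeing that $g$ is radical. If $g$ fails to be radical, there is an indecomposable direct summand $X$ common to $M'$ and $M$ on which $g$ restricts to an isomorphism; since $X$ inherits $\prk X \geq \lambda$ from $M'$, it is non-projective. I plan to remove such iso pairs by replacing the $X$-summand of $M'$ with a module $Y \in \ob \mathrm{I}(\radA^\lambda)$ admitting a radical epi $Y \to X$, and to iterate. This reduction proceeds by induction on the length of $M$: once $X$ is split off, the complementary summand $M_1 \subsetneq M$ has strictly smaller length and is handled by the inductive hypothesis. The true difficulty lies in the base case where $M$ itself is indecomposable, since then $X = M$ and induction on length stalls; here I expect to need an auxiliary construction, combining the $\radA^n$-approximations of Lemma \ref{lem} with a second application of Lemma \ref{inf}, to produce the radical cover $Y$ of $M$ inside $\ob \mathrm{I}(\radA^\lambda)$.
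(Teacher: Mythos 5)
Your first step is sound and overlaps with the paper's strategy: since $\prk M=\alpha$, any epimorphism $A^k\to M$ lies in $\radA^\alpha\subseteq\radA^\lambda\subseteq\textnormal{I}(\radA^\lambda)$, and Lemma \ref{inf} lets you factor it through some $M'\in\ob\textnormal{I}(\radA^\lambda)$, giving an epimorphism $g\colon M'\to M$ with $\lambda\leq\prk M'\leq\alpha$ (the bound via Remark \ref{calc}(i) is correct). But the entire content of the lemma is that the epimorphism can be chosen \emph{radical}, and this is exactly what you leave open. Your repair scheme is circular: in the base case where $M$ is indecomposable, and equally for each indecomposable summand $X$ you propose to ``replace'', what you need is a module $Y\in\ob\textnormal{I}(\radA^\lambda)$ together with a radical epimorphism $Y\to X$ --- which is precisely the statement of the lemma for $X$ (note $\prk X\geq\lambda$, so $X$ is again in the non-finite regime and there is no smaller parameter to induct on). You acknowledge this by saying you ``expect to need an auxiliary construction'' but do not give one, so the proposal does not close.

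The paper avoids the problem by building radicality in from the start rather than trying to restore it afterwards: it takes a right $\radA$-approximation $\varphi\colon C_M\to M$ (which exists by Proposition \ref{radical}) and sets $M'=tC_M$, the trace of $\textnormal{I}(\radA^\lambda)$ in $C_M$; the map $\varphi\iota\colon tC_M\to M$ is then radical for free, since $\varphi\in\radA$, and $tC_M\in\ob\textnormal{I}(\radA^\lambda)$ by Lemma \ref{inf}. The actual work is the surjectivity of $\varphi\iota$: using Remark \ref{local} one picks $\beta<\lambda$ with $\radA^\lambda(A,C_M)=\radA^\beta(A,C_M)$, factors an epimorphism $\pi\colon A^n\to M$ (which lies in $\radA^\lambda$) as $gf$ with $f,g\in\radA^\beta$, lifts $g$ through the approximation $\varphi$, and checks that the resulting morphism $A^n\to C_M$ again lies in $\radA^\lambda$ and hence factors through $\iota\colon tC_M\to C_M$; thus $\pi$ factors through $\varphi\iota$, which is therefore an epimorphism. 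Nothing in your sketch plays the role of this approximation-plus-stationarity argument, and without some analogue of it the radicality gap remains; the auxiliary construction you gesture at (Lemma \ref{lem} plus a second use of Lemma \ref{inf}) is not developed and, as stated, would again presuppose the lemma.
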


\begin{proof} Let $t$ be the subfunctor of $\mathbbm{1}_A$ corresponding to $\rm{I}(\radA^\lambda)$, let $\varphi \colon C_M \rightarrow M$ be a right $\radA$-approximation and $\pi \colon A^n \rightarrow M$ an epimorphism. The morphism $\pi$ is contained in $\radA^\alpha \subseteq \radA^\lambda$ since $\prk M = \alpha$.  We can choose $\beta < \lambda$ with $\mathcal{I}^\lambda (A, C_M) = \mathcal{I}^\beta (A,C_M)$ by Remark \ref{local}. Further, the morphism $\pi$ factors as $g f$ for $f, g \in \radA^\beta$ since $(\radA ^\beta)^2 \subseteq \radA^\lambda$. In particular $g$ must factor through the right $\radA$-approximation $\varphi$, so $g = \varphi g'$. Now $g'f \in \mathcal{I}^\beta (A,C_M) = \mathcal{I}^\lambda (A,C_M)$. Hence, $g'f$ factors through the canonical inclusion $\iota\colon t C_M \rightarrow C_M$, so $g'f = \iota f'$. This yields $\pi = gf = \varphi g' f = \varphi \iota f'$. Because $\pi$ is an epimorphism, it follows that $\varphi \iota$ is an epimorphism, which is also radical since $\varphi \in \radA$. The morphism $\varphi \iota$ starts in $tC_M$, which is contained in $\ob \rm{I}(\mathcal{I}^\lambda)$ by Lemma \ref{inf}. We conclude that $\prk tC_M \geq \lambda$. By Remark \ref{calc} the inequality $\prk tC_M \leq \prk M = \alpha$ holds. This proves the claim with $M' = tC_M$.
\end{proof}

\begin{prop}\label{chain} Let $X\in \modA$ be indecomposable of non-finite projective rank and write $\prk X  = \lambda + n$ for a limit ordinal $\lambda$ (or $\infty$) and $n\in \mathbbm{N}$. There exists a chain
\begin{align*}
    X = X_0 \xleftarrow{f_1} X_1 \xleftarrow{f_2} X_2 \xleftarrow{f_3} \dots
\end{align*}
of radical morphisms $f_i$ between indecomposable modules $X_i$ with $f_1 f_2 \dots f_i \neq 0$ and $\lambda \leq \prk X_i \leq \alpha$ for all $i$.
\end{prop}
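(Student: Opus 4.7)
The plan is to build the chain by induction on $i$, using Lemma~\ref{need} at each step. For the base case I take $X_0 := X$, which satisfies every stated condition by hypothesis: the composition condition is vacuous, $X_0$ is indecomposable, and $\lambda \leq \prk X_0 = \alpha$.

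For the inductive step, suppose $X_0, \ldots, X_i$ and morphisms $f_1, \ldots, f_i$ are constructed with $\phi := f_1 \cdots f_i \neq 0$. Since $\prk X_i \in [\lambda, \alpha]$ has the form $\lambda + m$ with $m \leq n$, it is non-finite, and Lemma~\ref{need} supplies a radical epimorphism $\pi \colon M \to X_i$ with $\lambda \leq \prk M \leq \prk X_i \leq \alpha$. Decompose $M = \bigoplus_j Y_j^{n_j}$ into indecomposable summands. The composition $\phi\pi \colon M \to X$ is nonzero, since $\phi \neq 0$ and $\pi$ is epimorphic, so at least one summand $Y_j$ satisfies $(\phi\pi)|_{Y_j} \neq 0$. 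I then set $X_{i+1} := Y_j$ and $f_{i+1} := \pi|_{Y_j} \colon X_{i+1} \to X_i$; this is radical (being the restriction of the radical morphism $\pi$ to an indecomposable summand), nonzero, and satisfies $f_1 \cdots f_{i+1} = \phi f_{i+1} = (\phi\pi)|_{Y_j} \neq 0$. The lower bound $\prk X_{i+1} \geq \prk M \geq \lambda$ follows from Remark~\ref{calc}(i) applied to the projection $M \twoheadrightarrow Y_j$.

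The main obstacle is to select the summand $Y_j$ so that in addition $\prk Y_j \leq \alpha$, since indecomposable summands of $M$ can a priori have projective rank far larger than $\prk M$. My plan is to argue by contradiction: if every summand $Y_j$ with $(\phi\pi)|_{Y_j} \neq 0$ had $\prk Y_j > \alpha$, split $M = N \oplus N'$ with $N'$ the direct sum of those summands; then $\prk N' > \alpha$, the map $\phi\pi$ vanishes on $N$, and hence $\phi\pi$ factors through the projection onto $N'$, inducing an epimorphism $N' \twoheadrightarrow \im\phi$. By Remark~\ref{calc}(i) this forces $\prk \im\phi \geq \prk N' > \alpha$. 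Obtaining the contradiction from $\prk \im\phi > \alpha$, $\im\phi \subseteq X$, and $\prk X = \alpha$ is the technical heart of the argument: the intended route is to use Lemma~\ref{inf} to translate $\prk \im\phi > \alpha$ into the assertion that every $\varphi \colon A \to \im\phi$ lies in $\radA^{\alpha+1}$, and then to exploit the $i$-fold radical structure of $\phi$ together with the inductively established bound $\prk X_j \leq \alpha$ for all $j$ so as to transfer this depth onto morphisms $A \to X$, contradicting $\prk X = \alpha$.
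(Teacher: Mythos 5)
Your inductive setup via Lemma~\ref{need} and the lower bound $\prk X_{i+1}\geq \lambda$ are fine, but the step you yourself flag as the technical heart is a genuine gap, and it is exactly where the greedy strategy breaks down. After splitting $M=N\oplus N'$ you only obtain an epimorphism $N'\twoheadrightarrow \im \phi$ onto the image of $\phi=f_1\cdots f_i$, which is in general a \emph{proper} submodule of $X$: each $f_j$ is the restriction of an epimorphism to a single summand, so the composite need not be epic. Remark~\ref{calc}(i) then yields $\prk \im\phi>\alpha$, but this does not contradict $\prk X=\alpha$, because the projective rank is only epi-monotone; nothing in the paper (or in general) bounds the projective rank of a submodule by that of the ambient module. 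The sketched repair does not close the hole either: translating $\prk\im\phi>\alpha$ (this is Remark~\ref{smallt} rather than Lemma~\ref{inf}) only says that every morphism $A\to X$ which factors through $\im\phi$ lies in $\radA^{\alpha+1}$, whereas the witness for $\prk X=\alpha$ is some morphism $A\to X$ in $\radA^{\alpha}\setminus\radA^{\alpha+1}$ that has no reason to factor through $\im\phi$; the ``$i$-fold radical structure'' of $\phi$ gives no handle on such a morphism. So the per-step claim that one can always choose a single indecomposable summand with nonzero composite to $X$ \emph{and} projective rank at most $\alpha$ remains unproved.

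The paper's proof avoids committing to one summand per step precisely for this reason. It carries along the whole decomposition: the ``good'' part $M_i$ (all indecomposable summands of projective rank in $[\lambda,\alpha]$) and the accumulated ``bad'' part $N_i$ (projective rank $>\alpha$), together with genuine epimorphisms $M_i\oplus N_i\to M_{i-1}\oplus N_{i-1}$ of triangular shape. The composite $M_i\oplus N_i\to M_0\oplus N_0=X$ is then an epimorphism onto all of $X$, so if the good-block composite $\alpha_1\cdots\alpha_i$ vanished, the bad part $N_i$ would surject onto $X$ itself, and Remark~\ref{calc}(i) would give $\alpha<\prk N_i\leq \prk X=\alpha$; here epi-monotonicity genuinely applies. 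Only after this nonvanishing is secured for all $i$ does the paper extract the chain of indecomposables, expanding $\alpha_1\cdots\alpha_i$ as a sum over paths of components and invoking the K\"onig Graph Theorem. To make your argument work you would either have to prove the per-step selection claim (which the cited results do not yield) or switch to this global bookkeeping followed by the path-extraction step.
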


\begin{proof} We construct modules $M_i, N_i$ recursively such that every indecomposable direct summand of $M_i$ has projective rank in the interval $[\lambda, \alpha]$ and $\prk N_i > \alpha$. Further, we construct epimorphisms
\begin{align*}
\varphi_i \colon M_i \oplus N_i \xrightarrow{\begin{pmatrix} \alpha_i & \beta_i\\0& \gamma_i \end{pmatrix}} M_{i-1}\oplus N_{i-1}
\end{align*}
with $\alpha_i\in \radA$. Let $M_0 = X$ and $N_0 = 0$. Suppose given $M_{i-1}, N_{i-1}$. By Lemma \ref{need}  there exists an epimorphism $\varphi \colon M' \rightarrow M_{i-1}$ with $\prk M' \in [\lambda, \alpha]$ and $\varphi \in \radA$. By Remark \ref{calc} (ii) we can decompose $M' = M \oplus N$ such that every indecomposable direct summand of $M$ has projective rank in $[\lambda,\alpha]$ and $\prk N> \alpha$. We define $M_{i} = M$ and $N_{i} = N_{i-1} \oplus N$. The morphism $\varphi \oplus 1_{N_{i-1}}$ induces the desired epimorphism $\varphi_{i}$ (via $M_i \oplus N_i = M' \oplus N_{i-1}$).

The composition $\varphi_1 \varphi_2 \dots \varphi_i$ is an epimorphism for all $i$. Further, $\varphi_1 \varphi_2 \dots \varphi_i$ equals 
\begin{align*}
    \begin{pmatrix} \alpha_1 & \beta_1\\0& \gamma_1 \end{pmatrix} \begin{pmatrix} \alpha_2 & \beta_2\\0& \gamma_2 \end{pmatrix} \dots \begin{pmatrix} \alpha_i & \beta_i\\ 0& \gamma_i \end{pmatrix} = \begin{pmatrix} \alpha_1 \alpha_2 \dots \alpha_i & \beta\\0 & \gamma_1 \gamma_2 \dots \gamma_i \end{pmatrix}
\end{align*}
for some $\beta$. If $\alpha_1 \alpha_2 \dots \alpha_i = 0$ for some $i$, then $\beta$ and $\gamma_1\gamma_2 \dots \gamma_i$ would induce an epimorphism $N_i \rightarrow M_0 \oplus N_0 = X$. By Remark \ref{calc} (i) this yields
\begin{align*}
    \alpha < \prk N_i \leq \prk X \leq \alpha,
\end{align*}
which is a contradiction. Hence, $\alpha_1 \alpha_2 \dots \alpha_i \neq 0$ for all $i$. Let $M_i = \bigoplus_{j=1}^{n_i} X_{ij}$ with $X_{ij}$ indecomposable. Then by construction $\prk X_{ij} \in [\lambda, \alpha]$ and $\alpha_i$ induces radical morphisms $\alpha_{ij}^{j'} \colon X_{ij} \rightarrow X_{(i-1)j'}$. We have
\begin{align*}
    0 \neq \alpha_{1} \alpha_{2} \dots \alpha_{i} = \sum_{j_0, j_1, \dots, j_i} \alpha_{1 j_1}^{j_0} \alpha_{2 j_2}^{j_1} \dots \alpha_{i j_i}^{j_{i-1}}.
\end{align*}
Thus, for all $i\in \mathbbm{N}$ there exist $j_0, j_1, \dots , j_i$ with $\alpha_{1 j_1}^{j_0} \alpha_{2 j_2}^{j_1} \dots \alpha_{i j_i}^{j_{i-1}} \neq 0$. As in \cite[p. 474]{Bass}, the K\"onig Graph Theorem now implies the existence of an infinite sequence $j_0, j_1, \dots$ such that $\alpha_{1 j_1}^{j_0} \alpha_{2 j_2}^{j_1} \dots \alpha_{i j_i}^{j_{i-1}} \neq 0$ for all $i$. 
\end{proof}

\begin{coro}\label{unbounded} Let $\lambda$ be a non-zero limit ordinal. For all $n\in \mathbbm{N}$ either no indecomposable modules have projective rank in $[\lambda, \lambda+n]$ or the length of indecomposable modules having projective rank in $[\lambda, \lambda+n]$ is unbounded.
\end{coro}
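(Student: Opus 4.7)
The plan is to deduce this from Proposition \ref{chain} together with the Harada--Sai lemma.

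First I would handle the trivial case: if no indecomposable module has projective rank in $[\lambda, \lambda+n]$, we are done. So assume there exists an indecomposable $X$ with $\lambda \leq \prk X \leq \lambda+n$. Since $\lambda$ is a non-zero limit ordinal, $\prk X$ has a unique presentation $\prk X = \lambda + m$ with $0 \leq m \leq n$. Now I apply Proposition \ref{chain} to $X$ to obtain a chain
\begin{align*}
    X = X_0 \xleftarrow{f_1} X_1 \xleftarrow{f_2} X_2 \xleftarrow{f_3} \cdots
\end{align*}
of radical morphisms $f_i$ between indecomposable modules $X_i$, satisfying $f_1 f_2 \cdots f_i \neq 0$ and $\lambda \leq \prk X_i \leq \lambda+m \leq \lambda+n$ for all $i$. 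In particular every $X_i$ is an indecomposable of projective rank in $[\lambda, \lambda+n]$.

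Suppose now for a contradiction that the lengths of indecomposable modules of projective rank in $[\lambda, \lambda+n]$ are bounded by some $L \in \mathbb{N}$. Then each $X_i$ has length at most $L$. A radical morphism between two indecomposable modules is always a non-isomorphism: if $\varphi \colon X_i \to X_{i-1}$ were an isomorphism, taking $g = \varphi^{-1}$ and $f = 1_{X_i}$ would give $g \varphi f = 1_{X_i}$, contradicting the definition of a radical morphism. Hence $f_1, f_2, \ldots$ is an infinite sequence of non-isomorphisms between indecomposable modules of length at most $L$. By the Harada--Sai lemma, any composition of $2^L$ such non-isomorphisms vanishes, so $f_1 f_2 \cdots f_{2^L} = 0$. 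This contradicts Proposition \ref{chain}.

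Therefore the length of indecomposable modules with projective rank in $[\lambda, \lambda+n]$ is unbounded. The main work is all packed into Proposition \ref{chain}; the only additional ingredient here is the Harada--Sai lemma, which is standard for Artin algebras. No obstacle remains beyond verifying that radical morphisms between indecomposables are non-isomorphisms, which is immediate from the definition of $\radA$.
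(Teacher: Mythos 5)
Your proof is correct and follows exactly the route the paper intends: the paper's own proof is the one-line "combine Proposition \ref{chain} and the Harada--Sai lemma," and you have simply spelled out the details (including the easy check that radical morphisms between indecomposables are non-isomorphisms), which is exactly the argument meant.
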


\begin{proof} Combine Proposition \ref{chain} and the Harada-Sai lemma. 
\end{proof}

\begin{exa}\label{kro}\rm Let $k$ be an algebraically closed field, $Q = \begin{tikzcd}
    1 \arrow[r, shift left] \arrow[r, shift right] & 2
\end{tikzcd}$ the Kronecker quiver and $A = kQ$. We can divide the indecomposable modules of $\modA$ into three parts: The preprojective modules $\mathcal{P}$, the regular modules  $\mathcal{R}$ and the preinjective modules $\mathcal{I}$. The Auslander-Reiten quiver of $\modA$ can be visualized as follows.
\tikzstyle{place}=[circle,draw=black!50,fill=black!100,thick,
inner sep=0pt,minimum size=1mm]
\begin{align*}
\begin{tikzpicture}
\draw (1, -0.5) node{$\mathcal{P}$};
\draw (9, -0.5) node{$\mathcal{I}$};
\draw (4.5, -0.5) node{$\mathcal{R}$};
\draw (0,0) node[place]{};
\draw [-stealth](0.1,0.15) -- (0.35,0.4);
\draw [-stealth](0.15,0.1) -- (0.4,0.35);
\draw (0.5,0.5) node[place]{};
\draw [-stealth](0.65,0.4) -- (0.9,0.15);
\draw [-stealth](0.6,0.35) -- (0.85,0.1);
\draw (1,0) node[place]{};
\draw [-stealth](1.1,0.15) -- (1.35,0.4);
\draw [-stealth](1.15,0.1) -- (1.4,0.35);
\draw (1.5,0.5) node[place]{};
\draw [-stealth](1.65,0.4) -- (1.9,0.15);
\draw [-stealth](1.6,0.35) -- (1.85,0.1);
\draw (2,0) node[place]{};
\draw (2.5,0) node{$\dots$};
\draw (10,0) node[place]{};
\draw [-stealth](9.1,0.15) -- (9.35,0.4);
\draw [-stealth](9.15,0.1) -- (9.4,0.35);
\draw (9.5,0.5) node[place]{};
\draw [-stealth](9.65,0.4) -- (9.9,0.15);
\draw [-stealth](9.6,0.35) -- (9.85,0.1);
\draw (9,0) node[place]{};
\draw [-stealth](8.65,0.4) -- (8.9,0.15);
\draw [-stealth](8.6,0.35) -- (8.85,0.1);
\draw (8.5,0.5) node[place]{};
\draw [-stealth](8.1,0.15) -- (8.35,0.4);
\draw [-stealth](8.15,0.1) -- (8.4,0.35);
\draw (8,0) node[place]{};
\draw (7.5,0) node{$\dots$};
\draw (3.5,0) circle (5pt);
\draw (3.33,0)--(3.33,1);
\draw (3.33,1.3) node{$\vdots$};
\draw (3.67,0)--(3.67,1);
\draw (3.67,1.3) node{$\vdots$};
\draw (4.25,0) circle (5pt);
\draw (4.08,0)--(4.08,1);
\draw (4.08,1.3) node{$\vdots$};
\draw (4.42,0)--(4.42,1);
\draw (4.42,1.3) node{$\vdots$};
\draw (5,0) circle (5pt);
\draw (4.83,0)--(4.83,1);
\draw (4.83,1.3) node{$\vdots$};
\draw (5.17,0)--(5.17,1);
\draw (5.17,1.3) node{$\vdots$};
\draw (5.75,0) node{$\dots$};
\draw (6.3,0) node{$\dots$};
\end{tikzpicture}
\end{align*}
\end{exa}
By Proposition \ref{preproj} it follows that the modules in $\mathcal{P}$ are exactly those of finite projective rank. Every morphism from $\mathcal{P}$ to $\mathcal{R}$ is in $\radA^\omega$ where $\omega$ denotes the smallest non-finte ordinal number. Further, every morphism from $\mathcal{P}$ to $\mathcal{I}$ is in $(\radA^\omega)^2 = \radA^{\omega+1}$. Since $A\in \mathcal{P}$, it follows that $\mathcal{R}$ contains exactly the modules of projective rank $\omega$ and $\mathcal{I}$ those of projective rank $\omega +1$. By Corollary \ref{unbounded} we would expect the length of modules in $\mathcal{R}$ to be unbounded, which is the case.
\section{The torsion dimension and the Krull-Gabriel dimension}

Through the concept of ideal torsion pairs, we introduce a new homological dimension for Artin algebras and relate it to the Krull-Gabriel dimension. The \emph{torsion-dimension} of $A$, denoted by $\TD(A)$, is the m-dimension of the lattice of functorially finite ideal torsion pairs of $A$.

\begin{prop}\label{small} The inequality $\TD(A) \leq \KG(A)$ holds.
\end{prop}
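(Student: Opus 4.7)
The plan is to realize the lattice of functorially finite ideal torsion pairs as a sublattice of the modular lattice $L$ of finitely presented subfunctors of $\Hom(A,-)$ that appears in Proposition \ref{mdim}, and then to invoke monotonicity of the m-dimension under sublattice inclusion. By Corollary \ref{corres} together with Remark \ref{latticecor}, this lattice is isomorphic to the lattice $L'$ of finitely presented (submodule-valued) subfunctors of $\mathbbm{1}_A$. Using Lemma \ref{homiso} and composing with the forgetful functor $\modA \to \Ab$, each such $t$ determines a finitely presented subfunctor of $\Hom(A,-)$ in $\funfp{A}$, giving an injective map $L' \hookrightarrow L$. This map preserves meets and joins, because in both lattices these are computed pointwise as intersections and sums, and the sum (respectively intersection) of two submodules of a module agrees with their sum (respectively intersection) as abelian subgroups. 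Consequently $L'$ is a sublattice of $L$.

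It remains to establish the general fact that $\dim L' \leq \dim L$ whenever $L'$ is a sublattice of a modular lattice $L$ (with shared meets and joins). The plan is transfinite induction on $\dim L$. For $x,y \in L'$, the interval $[x\wedge y,\,x\vee y]_{L'}$ is contained in $[x\wedge y,\,x\vee y]_{L}$, so finite length in $L$ implies finite length in $L'$. Hence the restriction $\sim_L|_{L'}$ refines $\sim_{L'}$, and $L'/{\sim_{L'}}$ is a lattice quotient of the sublattice $L'/({\sim_L}|_{L'})$ of $L/{\sim_L}$. A parallel argument, lifting pairs through a lattice surjection and using that the image of a finite length interval under a lattice homomorphism is again of finite length, shows that lattice quotients also preserve the m-dimension bound, so the joint induction closes. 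Combining this with Proposition \ref{mdim} yields $\TD(A) = \dim L' \leq \dim L = \KG(A)$.

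The main obstacle is the last, purely lattice-theoretic step: one must simultaneously track how the equivalence relation $\sim$ behaves under passage to sublattices and to quotients at each stage of the m-dimension filtration, since neither operation alone preserves the lattice isomorphism class of the ambient stage. Once these compatibilities are set up, however, the induction is routine, and the module-theoretic content of the proposition is fully captured by the identification of $L'$ as a sublattice of $L$.
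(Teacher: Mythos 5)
Your proposal is correct and follows essentially the same route as the paper: identify $\TD(A)$ with the m-dimension of the lattice of finitely presented subfunctors of $\mathbbm{1}_A$ via Corollary \ref{corres}, embed this lattice by composing with the forgetful functor $U \cong \Hom(A,-)$ as a sublattice of the lattice of finitely presented subfunctors of $\Hom(A,-)$, and conclude with Proposition \ref{mdim}. The only difference is that the paper treats the monotonicity of the m-dimension under an injective lattice homomorphism as a known standard fact, whereas you sketch its proof; your sketch is the usual argument and is fine.
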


\begin{proof} Let $U\colon \modA \rightarrow \Ab$ be the forgetful functor, $L$ the lattice of finitely presented subfunctors of $f$ and $L'$ the lattice of finitely presented subfunctors of $\mathbbm{1}_A$. Note that in the first case we consider additive functors from $\modA$ to $\Ab$, while in the second case we consider them to be from $\modA$ to $\modA$. Clearly $L' \rightarrow L, F \mapsto U F$ defines an injective lattice homomorphism. Hence, $\dim L' \leq \dim L$. Since $U \cong \Hom(A,-)$, the equality $\dim L = \KG(A)$ holds by Proposition \ref{mdim}. Further, we have $\dim L' = \TD(A)$ by Corollary \ref{corres}. It follows that $\TD(A) \leq \KG(A)$.
\end{proof}

\begin{rem}\label{commutative} \rm Suppose that $A$ is commutative and let $U\colon \modA \rightarrow \Ab$ be the forgetful functor. If $t$ is a subfunctor of $U$, then $tX$ is an abelian subgroup of $X$. Because multiplication with an element in $A$ is a morphism in $\modA$, it follows from the functoriality of $t$ that $tX$ is a submodule of $X$. Thus, subfunctors of $\mathbbm{1}_A$ coincide with subfunctors of $U$. Hence, $\KG(A) = \TD(A)$ in that case.   
\end{rem}

Another link between the torsion dimension and the Krull-Gabriel dimension of $A$ is given by the radical ideal $\radA$ of $\modA$ and its ordinal powers $\radA^\alpha$. First, we describe the relation between $\radA^\alpha$ and the Krull-Gabriel dimension. Schr\"oer conjectured that $\KG(A) =  n$ if and only if $\radA^{\omega (n-1)} \neq 0$ and $\radA^{\omega n} = 0$ for $n\in \mathbbm{N}_{>1}$, where $\omega$ denotes the first non-finite ordinal \cite{Schroer}. He defined the Krull-Gabriel dimension in such a way that it only takes values in $\mathbbm{N}\cup \{ \infty \}$. We state a version of his conjecture that also accounts for ordinal numbers. Note that there is no known example of an Artin algebra $A$ with $\omega \leq \KG(A) < \infty$. 

\begin{conj}\label{schr} Let $\alpha$ be a non-zero ordinal number. 
\begin{itemize}
    \item[\rm (i)] If $\KG(A) \geq \alpha+1$, then $\radA^{\omega \alpha} \neq 0$.
    \item[\rm (ii)] If $\radA^{\omega \alpha} \neq 0$, then $\KG(A) \geq \alpha+1$.
\end{itemize}
\end{conj}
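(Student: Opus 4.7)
The plan is to attack the conjecture by translating everything into the lattice $L$ of finitely presented subfunctors of $\Hom(A,-)$, whose m-dimension equals $\KG(A)$ by Proposition \ref{mdim}, and to exploit the close relationship between ordinal powers $\radA^{\omega\alpha}$ and the sublattice of finitely presented subfunctors of $\mathbbm{1}_A\cong \Hom(A,-)$ which, by Corollary \ref{corres}, corresponds to functorially finite ideal torsion pairs. The two known inputs are Krause's result $\radA^{\omega\alpha}\neq 0 \Rightarrow \KG(A)\geq \alpha$ and the author's Proposition \ref{okay} $\radA^{\omega\alpha}\neq 0 \Rightarrow \TD(A)\geq \alpha$; both stop one short of what is claimed.

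\textbf{Direction (ii).} Starting from a non-zero $\varphi \in \radA^{\omega\alpha}$, I would first pass to the functorially finite torsion ideal $\mathrm{I}(\radA^{\omega\alpha})$, whose corresponding subfunctor $t$ of $\mathbbm{1}_A$ is non-zero and finitely presented. Using Lemma \ref{homiso} to identify $t \cong \mathcal{I}(A,-)$, I would then examine the descending chain
\[
\radA^{\omega\alpha}(A,-) \supseteq \radA^{\omega\alpha+1}(A,-) \supseteq \dots \supseteq \radA^{\omega(\alpha+1)}(A,-)
\]
of finitely presented subfunctors of $t$, produced by iterating the monoidal composition of Proposition \ref{scase} with $\radA$. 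The goal is to show this chain does not stabilize after any finite number of steps and, moreover, that its members remain pairwise non-$\sim$-equivalent after iterating the finite-length collapse $\alpha$ times, so that $L_\alpha$ still contains an infinite chain. This would force $\dim L \geq \alpha + 1$ and hence $\KG(A) \geq \alpha + 1$.

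\textbf{Direction (i).} I would attempt transfinite induction on $\alpha$ on the contrapositive $\radA^{\omega\alpha}=0 \Rightarrow \KG(A)\leq \alpha$. The base case $\alpha=1$ is classical: by the Harada--Sai lemma, $\radA^\omega=0$ forces $A$ to be of finite representation type, so by Auslander's theorem $\KG(A) = 0 \leq 1$. For the successor step, given $(\radA^{\omega\alpha})^\omega = \radA^{\omega(\alpha+1)}=0$, the plan is to pass to a Serre quotient $\funfp{A}/\mathcal{S}$, with $\mathcal{S}$ the Serre subcategory generated by finitely presented functors supported on $\mathrm{I}(\radA^{\omega\alpha})$, and to verify that in this quotient the image of the radical satisfies an analogous hypothesis one level smaller, closing the induction via an inequality of Krull--Gabriel dimensions of the form $\KG(A) \leq \KG(\funfp{A}/\mathcal{S}) + \alpha$. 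The limit step then follows from the identifications $\radA^{\omega\lambda}=\bigcap_{\beta<\lambda}\radA^{\omega\beta}$ (Remark \ref{local}) and $\mathcal{S}_\lambda = \bigcup_{\beta<\lambda}\mathcal{S}_\beta$.

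\textbf{Main obstacle.} Both Step 2 of (ii) and the successor step of (i) hinge on the same delicate control: understanding how ordinal powers of $\radA$ interact with the Serre filtration $(\mathcal{S}_\beta)$ past the first infinite level. The complete absence of known Artin algebras with $\omega \leq \KG(A) < \infty$ is strong evidence that the transfinite case is genuinely hard, and that a proof will likely require new input — perhaps routed through $\TD(A)$ and the monoidal composition of Proposition \ref{scase}, which produces new functorially finite ideal torsion pairs from old ones and is the natural tool for manipulating ordinal powers. For finite $\alpha$ the approach above has a realistic chance of success; for infinite $\alpha$ it will, at best, reduce the conjecture to a sharper statement about the behaviour of $\radA^\omega$ in Serre quotients.
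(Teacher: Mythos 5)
The statement you are trying to prove is stated in the paper as a \emph{conjecture} (an ordinal version of Schr\"oer's conjecture), and the paper offers no proof of it: it only records the known partial results --- part (ii) for $\alpha = 1$ due to Herzog, Krause's weaker bound that $\radA^{\omega\alpha}\neq 0$ implies $\KG(A)\geq \alpha$, the analogous Proposition \ref{okay} for the torsion dimension, and the verification for string algebras. Your proposal does not close this gap either; it is a plan whose decisive steps are left unproved, as you yourself concede in the final paragraph. Concretely: in direction (ii) you assert that $\mathrm{I}(\radA^{\omega\alpha})$ is a functorially finite torsion ideal whose corresponding subfunctor of $\mathbbm{1}_A$ is finitely presented, but the paper only establishes functorial finiteness for the finite powers $\radA^n$ (Lemma \ref{lem}); by Corollary \ref{coro} your claim would require $A$ to admit a left $\radA^{\omega\alpha}$-approximation, which is not known and is not automatic. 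More importantly, the step ``the chain $\radA^{\omega\alpha+n}(A,-)$ remains pairwise non-$\sim$-equivalent after iterating the finite-length collapse $\alpha$ times'' is not an argument but a restatement of the desired inequality $\KG(A)\geq\alpha+1$; the analogous induction is exactly what is carried out in the proof of Proposition \ref{okay}, and there it only yields m-dimension $\geq\alpha$ for the relevant interval, i.e.\ the bound $\TD(A)\geq\alpha$ (hence $\KG(A)\geq\alpha$ via Proposition \ref{small}), one short of the conjecture --- your sketch supplies no new mechanism to gain the extra step.

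In direction (i) the situation is the same: the successor step rests on an unproved inequality of the shape $\KG(A)\leq \KG\bigl(\funfp{A}/\mathcal{S}\bigr)+\alpha$ together with the claim that the image of the radical in the Serre quotient satisfies the hypothesis ``one level smaller,'' and neither is justified (nor is it clear how to make sense of ordinal powers of a radical inside a Serre quotient of $\funfp{A}$, which is precisely the difficulty you flag as the main obstacle). Only the base case $\alpha=1$ of (i) is solid, since $\radA^{\omega}=0$ forces every indecomposable to be preprojective (Corollary \ref{exi} and Proposition \ref{preproj}) and hence $A$ to be of finite representation type, giving $\KG(A)=0$. So the verdict is that your text is a reasonable research programme but not a proof; the statement remains open, and any referee comparison with ``the paper's proof'' is moot because no such proof exists.
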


Clearly the above conjecture is equivalent to: $\KG(A) = \alpha +1$ if and only if $\radA^{\omega \alpha} \neq 0$ and $\radA^{\omega(\alpha +1)} = 0$. This is exactly Schr\"oers conjecture for $\alpha \in \mathbbm{N}_{>0}$. Further, (ii) is proven for $\alpha = 1$ \cite{Herzog} and Krause showed that if $\radA^{\omega \alpha} \neq 0$, then $\KG(A) \geq \alpha$ \cite[Cor. 8.14]{Krause}.
The whole conjecture is proven if $A$ is a string algebra over an algebraically closed field $k$ \cite[Corollary 1.3]{Laking}.

The next aim will be to connect the torsion dimension and ordinal powers of the radical ideal. To do so, we give a different view on the lattice of finitely presented subfunctors of the forgetful functor $U \colon \modA \rightarrow \Ab$ and the lattice of finitely presented subfunctors of $\mathbbm{1}_A$.

\begin{rem}\rm Let $U\colon \modA \rightarrow \Ab$ be the forgetful functor. By the isomorphism $U\cong \Hom(A,-)$, we can identify subfunctors of $U$ with subfunctors of $\Hom(A,-)$. A finitely presented subfunctor of $\Hom(A,-)$ equals $\im \Hom(\varphi,-)$ for some $\varphi \colon A \rightarrow M$. Further, $\im \Hom(\varphi,-)$ is a subfunctor of $\im \Hom(\psi,-)$ if and only if $\varphi$ factors through $\psi$. Let $L$ be the collection of equivalence classes of morphisms starting in $A$ with the equivalence relation $\varphi \sim \psi$ if $\varphi$ factors through $\psi$ and $\psi$ factors through $\varphi$. Then $L$ is partially ordered by $\varphi \leq \psi$ if $\varphi$ factors through $\psi$. By the above observations, we can identify the lattice of finitely presented subfunctors of $U$ with $L$.
\end{rem}

\begin{lem}\label{dif} Let $U\colon \modA \rightarrow \Ab$ be the forgetful functor. The identification of finitely presented subfunctors of $U$ and equivalence classes of morphisms starting in $A$ restricts to an identification between finitely presented subfunctors of $\mathbbm{1}_A$ and equivalence classes of morphisms $\varphi\colon A\rightarrow M$ such that for all $a\in A$ there exists $\alpha\colon M\rightarrow M$ with $\varphi(a) = \alpha \varphi(1)$.
\end{lem}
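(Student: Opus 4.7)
\medskip

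\textbf{Proof plan.} The plan is to make explicit what the finitely presented subfunctor of $U$ attached to $\varphi\colon A\to M$ looks like pointwise, and then read off when this subfunctor actually takes values in submodules. Under the Yoneda identification $\Hom(A,X)\cong X$, $f\mapsto f(1)$, the finitely presented subfunctor $s=\im\Hom(\varphi,-)\subseteq\Hom(A,-)$ becomes, at each $X\in\modA$,
\begin{align*}
    s(X) \;=\; \{\,\psi(\varphi(1)) \mid \psi\in \Hom(M,X)\,\} \;\subseteq\; X.
\end{align*}
Thus $s$ is a subfunctor of $\mathbbm{1}_A$ (i.e.\ each $s(X)$ is an $A$-submodule of $X$) precisely when this subset is stable under the $A$-action for every $X$.

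First I would prove necessity by evaluating at $M$ with $\psi=1_M$. Then $\varphi(1)\in s(M)$; if $s(M)$ is a submodule, $a\varphi(1)\in s(M)$ for every $a\in A$, and since $\varphi$ is $A$-linear we have $a\varphi(1)=\varphi(a)$, so there exists $\alpha\in \End_A(M)$ with $\varphi(a)=\alpha\varphi(1)$. Conversely, to prove sufficiency, assume the condition and take an arbitrary element $x=\psi(\varphi(1))\in s(X)$. Then for any $a\in A$,
\begin{align*}
    a x \;=\; \psi(a\varphi(1)) \;=\; \psi(\varphi(a)) \;=\; \psi(\alpha\varphi(1)) \;=\; (\psi\alpha)(\varphi(1)) \;\in\; s(X),
\end{align*}
so $s(X)$ is closed under the $A$-action. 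This is the one genuine computation, and it is the crux of the lemma.

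Finally I would check that the described property of $\varphi$ is well-defined on equivalence classes: if $\varphi'=h\varphi$ and $\varphi=g\varphi'$ for morphisms $g\colon M'\to M$, $h\colon M\to M'$, and $\alpha\in\End_A(M)$ satisfies $\varphi(a)=\alpha\varphi(1)$, then $\alpha'\coloneqq h\alpha g\in \End_A(M')$ satisfies $\varphi'(a)=h\varphi(a)=h\alpha\varphi(1)=h\alpha g\varphi'(1)=\alpha'\varphi'(1)$. Combined with the two directions above, this shows the correspondence from finitely presented subfunctors of $U$ to equivalence classes of morphisms starting in $A$ restricts, on both sides, to the subsets described in the statement. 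I do not expect any serious obstacle: the main care needed is to keep the Yoneda identification straight and to remember that $A$-linearity of $\varphi$ turns $a\cdot\varphi(1)$ into $\varphi(a)$, which is what converts the submodule condition on $s(M)$ into the stated factorisation property.
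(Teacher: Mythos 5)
Your proposal is correct and follows essentially the same route as the paper: identify $\im\Hom(\varphi,-)$ pointwise under the Yoneda isomorphism, get necessity by evaluating at $M$ with the identity and using $A$-linearity of $\varphi$, and get sufficiency from the computation $a\,\psi(\varphi(1))=\psi(\varphi(a))=(\psi\alpha)(\varphi(1))$. The extra check that the condition is invariant under the factorisation equivalence is a harmless addition not spelled out in the paper.
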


\begin{proof} The isomorphism $\Hom(A,-) \cong U$ is given by $\psi \mapsto \psi(1)$ for $N\in \modA$ and $\psi \in \Hom(A,N)$. Hence, for $\varphi\colon A \rightarrow M$ the functor $\im \Hom(\varphi,-)$ becomes a subfunctor of $\mathbbm{1}_A$ under the isomorphism if and only if for $N\in \modA$ the abelian group
\begin{align*}
    N' = \{\psi(1) \mid \psi  \in \im\Hom(\varphi, N)\} = \{\alpha \varphi(1) \mid \alpha \colon M \rightarrow N \}
\end{align*}
is a submodule of $N$. In particular, in that case for all $a\in A$ there exists $\alpha\colon M \rightarrow M$ with $\varphi(a) = \alpha \varphi(1)$. Now this property already suffices, since for arbitrary $\beta \colon M \rightarrow N$ we have $a \beta \varphi(1) = \beta \varphi(a) = (\beta \alpha )\varphi(1)$ and so $N'$ is a submodule of $N$. 
\end{proof}

\begin{lem}\label{bigger} Let $A$ be generated by $a_1, a_2, \dots, a_n$ as a $k$-module. For a morphism $\varphi\colon A \rightarrow M$ let $\overline{\varphi}\colon A \rightarrow M^n$ be defined by $\overline{\varphi}(1) = (\varphi(a_1)\, \varphi(a_2)\,\dots \varphi(a_n))^\top$. Then for all $a\in A$ there exists $\alpha\colon M^n \rightarrow M^n$ with $\overline{\varphi}(a) = \alpha \overline{\varphi}(1)$.
\end{lem}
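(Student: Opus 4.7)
The plan is to exhibit $\alpha$ explicitly using the fact that $\{a_1,\dots,a_n\}$ generates $A$ over the central ring $k$, so that multiplication by $a$ on the ``generating vector'' can be realized by a $k$-linear (hence $A$-linear) endomorphism of $M^n$.

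First I would unpack $\overline{\varphi}(a)$. Since $\overline{\varphi}$ is $A$-linear we have
\[
\overline{\varphi}(a) \;=\; a\cdot \overline{\varphi}(1) \;=\; \bigl(a\varphi(a_1),\,a\varphi(a_2),\,\dots,\,a\varphi(a_n)\bigr)^\top \;=\; \bigl(\varphi(aa_1),\,\varphi(aa_2),\,\dots,\,\varphi(aa_n)\bigr)^\top,
\]
using that each $\varphi$ and hence each component is $A$-linear.

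Next I would use that $A$ is finitely generated over $k$ via $a_1,\dots,a_n$: for each $i$ we can write $aa_i = \sum_{j=1}^n c_{ij}a_j$ with $c_{ij}\in k$. Applying $\varphi$ (which is $k$-linear because it is $A$-linear and $A$ is a $k$-algebra) yields $\varphi(aa_i) = \sum_{j} c_{ij}\varphi(a_j)$. I would then define $\alpha\colon M^n\to M^n$ as the endomorphism given by the $n\times n$ matrix $(c_{ij})$ acting on column vectors via the $k$-module structure on $M$. The key point is that $\alpha$ is an honest morphism in $\modA$: since $k$ acts centrally on $M$ (each element of $k$ induces an $A$-module map $M\to M$), the assignment $(m_1,\dots,m_n)\mapsto \bigl(\sum_j c_{1j}m_j,\dots,\sum_j c_{nj}m_j\bigr)$ commutes with the $A$-action on $M^n$.

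Finally I would verify the required equality componentwise: the $i$-th component of $\alpha\,\overline{\varphi}(1)$ is $\sum_{j} c_{ij}\varphi(a_j) = \varphi(aa_i)$, which matches the $i$-th component of $\overline{\varphi}(a)$ computed above. Hence $\alpha\,\overline{\varphi}(1) = \overline{\varphi}(a)$, as required. There is no serious obstacle; the only subtle point is confirming that the matrix $(c_{ij})$ with scalar entries does give a morphism in $\modA$ rather than merely a $k$-linear map, which is precisely the centrality of the $k$-action on $A$-modules.
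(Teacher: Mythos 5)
Your proposal is correct and follows the same route as the paper: write $aa_i=\sum_j c_{ij}a_j$ with $c_{ij}\in k$, let $\alpha$ be the morphism given by the matrix $(c_{ij})$ (an $A$-module map since $k$ acts centrally), and check the equality componentwise. The only difference is that you spell out more explicitly why $\overline{\varphi}(a)$ has components $\varphi(aa_i)$ and why the scalar matrix yields a morphism in $\modA$, which the paper leaves implicit.
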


\begin{proof} For $a\in A$ let $a a_i = \sum_{i=1}^n c_{ij} a_j$ with $c_{ij} \in k$. Then the matrix $(c_{ij})_{i,j}$ defines a morphism $\alpha\colon M^n \rightarrow M^n$ such that the $i$'th component of $\alpha \overline{\varphi}(1)$ equals
\begin{align*}
    \sum_{j=1}^n c_{ij} \varphi(a_j) = \varphi\left(\sum_{j=1}^n c_{ij} a_j\right) = \varphi(a a_i),
\end{align*}
which is exactly the $i$'th component of $\overline{\varphi}(a)$. Hence, $\alpha \overline{\varphi}(1) = \overline{\varphi}(a)$.
\end{proof}

We are now ready to prove a connection between the torsion dimension of $A$ and ordinal powers of the radical ideal of $\modA$. It is similar to the result of Krause that $\radA^{\omega \alpha} \neq 0$ implies $\KG(A)\geq \alpha$. 

\begin{prop}\label{okay}Let $\alpha$ be a non-zero ordinal. If $\radA^{\omega \alpha} \neq 0$, then $\TD (A)\geq \alpha$.
\end{prop}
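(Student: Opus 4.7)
The plan is to proceed by transfinite induction on $\alpha$, paralleling Krause's proof that $\radA^{\omega\alpha}\neq 0$ implies $\KG(A)\geq \alpha$ \cite{Krause}. By Corollary \ref{corres}, $\TD(A)$ is the m-dimension of the lattice $L'$ of finitely presented subfunctors of $\mathbbm{1}_A$, which sits inside the lattice $L$ of finitely presented subfunctors of $\Hom(A,-)$ (the latter computing $\KG(A)$ via Proposition \ref{mdim}). Lemma \ref{bigger} provides a systematic passage $\varphi\mapsto\overline{\varphi}$ from elements of $L$ to elements of $L'$—the $\mathbbm{1}_A$-hull—which will be the main bridge for transferring Krause-style obstructions for $\dim L$ into obstructions for $\dim L'$.

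For the base case $\alpha=1$, the goal is to show $L'$ has infinite length. Since each $\radA^n$ is functorially finite (Lemma \ref{lem}), Corollary \ref{coro} gives that each torsion ideal $\rm{I}(\radA^n)$ lies in $L'$, yielding a descending chain $t_1\supseteq t_2\supseteq\cdots$ of finitely presented subfunctors of $\mathbbm{1}_A$ with $t_nM=\{\varphi(1):\varphi\in\radA^n(A,M)\}$ by Remark \ref{smallt} and Lemma \ref{homiso}. The assumption $\radA^\omega\neq 0$ should guarantee that enough of these inclusions remain strict—using Remark \ref{local} to track $\radA^n(A,M)$ inside the finite-length $k$-module $\Hom(A,M)$—to force $L'$ to have infinite length, hence $\TD(A)\geq 1$.

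For the inductive step, choose $0\neq f\in\radA^{\omega\alpha}$. The monoidal structure on pairs $s\leq t$ of subfunctors of $\mathbbm{1}_A$ (Proposition \ref{scase}, Lemma \ref{rest}, Corollary \ref{ext}) allows composition of functorially finite ideal torsion pairs inside $L'$. For each $\beta<\alpha$ with $\radA^{\omega\beta}\neq 0$, the inductive hypothesis supplies an interval in $L'$ of m-dimension $\geq\beta$; composing this interval with the subfunctor realising $f$ via its $\omega\alpha$-fold radical factorisation produces a strictly larger interval of m-dimension $\geq\beta+1$. Taking $\beta=\alpha-1$ in the successor case, or limiting over $\beta<\alpha$ in the limit case via $L'_\lambda=\varinjlim_{\beta<\lambda}L'_\beta$, gives $\TD(A)\geq\alpha$.

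The main obstacle is the limit step: one must show that the successive compositions really land in $L'$ with the prescribed m-dimension growth, and that at limit stages the direct limit does not collapse the previously constructed obstructions. Lemma \ref{rest} guarantees finite presentation of the composed functors, so functorial finiteness of the resulting ideal torsion pairs is automatic, but Krause's argument is free to use all of $L$ whereas we are restricted to the sublattice $L'$. The transfer works because $\overline{\varphi}$ lies at finite distance from $\varphi$ in $L$—the $A$-span of any cyclic submodule of a finite-length module is itself of finite length—so the passage from $L$ to $L'$ only collapses intervals of finite length and therefore preserves m-dimension at the ordinal level.
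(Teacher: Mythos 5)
Your proposal correctly identifies the bridge the paper uses (the passage $\varphi\mapsto\overline{\varphi}$ of Lemma \ref{bigger} together with Lemma \ref{dif}), but the inductive mechanism you describe has a genuine gap at the successor step. The paper does not induct on the global statement $\TD(A)\geq\alpha$; it proves the sharper local statement that for every $\psi\in\radA^{\omega\alpha}$ and $\varphi\colon A\to M$ with $\psi\varphi\neq 0$, the interval $[t',t]$ between the finitely presented subfunctors of $\mathbbm{1}_A$ corresponding to $\overline{\psi\varphi}\leq\overline{\varphi}$ has m-dimension at least $\alpha$. This stronger, quantified hypothesis is exactly what makes the successor step work: since $\radA^{\omega(\alpha+1)}=\bigcap_i(\radA^{\omega\alpha})^i$, for every $m$ one factors $\psi=\psi_m\cdots\psi_1$ with all $\psi_i\in\radA^{\omega\alpha}$, obtains a chain $t'=t_m\leq\cdots\leq t_0=t$ from the morphisms $\overline{\psi_i\cdots\psi_1\varphi}$, applies the hypothesis to each consecutive pair, and uses the arbitrariness of $m$ to push the m-dimension of $[t',t]$ up to $\alpha+1$. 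Your step --- ``composing this interval with the subfunctor realising $f$ produces a strictly larger interval of m-dimension $\geq\beta+1$'' --- does not achieve this: embedding an interval of m-dimension $\geq\beta$ into a strictly larger interval never raises the m-dimension by itself; one needs, for every $m$, a stack of $m$ consecutive subintervals each of m-dimension $\geq\beta$, and neither the monoidal structure of Proposition \ref{scase} nor Lemma \ref{rest}/Corollary \ref{ext} supplies such stacks. With only the global hypothesis $\TD(A)\geq\beta$ you also cannot quantify over the factors $\psi_i$, so the induction cannot be run as you set it up.

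Two further points. In the base case you assert, but do not prove, that $\radA^\omega\neq 0$ forces infinitely many strict inclusions among the functors attached to ${\rm I}(\radA^n)$; this is in fact provable (via Lemma \ref{lem}: a single equality $\radA^n(A,-)=\radA^{n+1}(A,-)$ forces the minimal left $\radA^n$-approximation of $A$ to vanish and hence $\radA^n=0$), but as written it is a gap, whereas the paper gets strictness directly by factoring one fixed nonzero $\psi\in\radA^\omega$ through $m$ radical morphisms and using local nilpotence of radical endomorphisms of finite-length modules. Finally, your closing claim that $\overline{\varphi}$ lies at ``finite distance'' from $\varphi$ in $L$, so that the passage from $L$ to $L'$ collapses only finite-length intervals and preserves m-dimension, is unjustified and far too strong: it would essentially give $\TD(A)=\KG(A)$ for every Artin algebra, which the paper neither proves nor conjectures unconditionally (it only has $\TD(A)\leq\KG(A)$ and the conditional statement of Corollary \ref{kgtd}). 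The quotient of the functors corresponding to $\overline{\varphi}$ and $\varphi$ need not have finite length, and no argument is offered.
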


\begin{proof} Let $\psi \colon M \rightarrow N$ be a non-zero morphism in $\radA^{\omega \alpha}$ and $\varphi\colon A \rightarrow M$ with $\psi \varphi \neq 0$. Consider $n\in \mathbbm{N}$ and the morphisms $\overline{\varphi}$, $\overline{\psi \varphi}$ as in Lemma \ref{bigger}. By construction $\overline{\psi \varphi} = \psi^n \overline{\varphi}$. By Lemma \ref{dif} the morphisms correspond to finitely presented subfunctors $t'\leq t$ of $\mathbbm{1}_A$. We show that the m-dimension of the interval $[t',t]$ is greater than or equal to $\alpha$ by transfinite induction.

For $\alpha = 1$ we have $\psi \in \radA^\omega$ so for all $m$ there exists $\psi_i \colon M_i \rightarrow M_{i+1}$ in $\radA$ for $i= 1,2,\dots,m$ with $\psi = \psi_m \dots \psi_1$. For $i=0,1,\dots,m$ consider the finitely presented subfunctor $t_i$ of $\mathbbm{1}_A$ corresponding to $f_i = \overline{\psi_i\dots\psi_1 \varphi}$. Then $f_i = \psi_i^n f_{i-1}$ and thus $t' = t_m \leq t_{m-1} \leq \dots \leq t_0 = t$. Suppose $t_i = t_{i-1}$ for some $i$. Then $f_{i-1}$ would factor through $f_i$ and there exists $g\colon M_{i} \rightarrow M_{i-1}$ with $f_{i-1}= g f_i = g\psi_i^n f_{i-1} = \alpha f_{i-1}$ for $\alpha = g \psi_i^n \in \radA$. Hence, there exists $l>0$ with $\alpha^l = 0$ and $f_{i-1} = \alpha^l f_{i-1} = 0$. But then $\overline{\psi\varphi} = f_m = 0$ and thus $\psi \varphi = 0$, which is a contradiction. Hence, $t' = t_m \lneq t_{m-1} \lneq \dots \lneq t_0 = t$. Because $m$ was arbitrary, it follows that the interval $[t',t]$ has m-dimension greater than or equal to $1$.

Assume the claim holds for $\alpha>0$ and let $\psi \in \radA^{\omega (\alpha+1)} = \bigcap_{i=1}^\infty (\radA^{\omega \alpha})^i$. Then for all $m$ there exists $\psi_i \colon M_i \rightarrow M_{i+1}$ in $\radA^{\omega \alpha}$ for $i= 1,2,\dots,m$ with $\psi = \psi_m \dots \psi_1$. For $i=0,1,\dots,m$ consider the finitely presented subfunctor $t_i$ of $\mathbbm{1}_A$ corresponding to $f_i = \overline{\psi_i\dots\psi_1 \varphi}$. Then $f_i = \psi_i^n f_{i-1}$ and thus $t' = t_m \leq t_{m-1} \leq \dots \leq t_0 = t$. By induction, the interval $[t_i, t_{i+1}]$ has m-dimension greater than or equal to $\alpha$. Because $m$ was arbitrary, it follows that the interval $[t',t]$ has m-dimension greater than or equal to $\alpha + 1$.

Assume the claim holds for all $\alpha < \lambda$ with $\lambda$ a limit ordinal and let $\psi \in \radA^\lambda$. Then $\psi \in \radA^\alpha$ for all $\alpha < \lambda$ and by induction the m-dimension of $[t,t']$ is greater than or equal to $\alpha$ for all $\alpha < \lambda$. Hence, it is also greater than or equal to $\lambda$.
\end{proof}

\begin{coro}\label{kgtd} If Conjecture \ref{schr} (i) is true, then either $\KG(A) = \TD(A)$ or $\KG(A) = \TD(A)+1$.
\end{coro}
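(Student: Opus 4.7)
The plan is to combine the already-established inequality $\TD(A) \leq \KG(A)$ from Proposition \ref{small} with Proposition \ref{okay} and the assumed Conjecture \ref{schr} (i) to obtain the reverse bound $\KG(A) \leq \TD(A)+1$. Writing $\kappa = \KG(A)$ and $\tau = \TD(A)$, what remains is to show $\kappa \leq \tau + 1$. The key observation is that chaining Conjecture \ref{schr} (i) with Proposition \ref{okay} yields the implication: for every non-zero ordinal $\alpha$, if $\kappa \geq \alpha + 1$ then $\radA^{\omega \alpha} \neq 0$ and hence $\tau \geq \alpha$.

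From here I would carry out a short case analysis on the form of $\kappa$. If $\kappa = \beta + 1$ is a non-zero successor ordinal with $\beta \neq 0$, applying the implication to $\alpha = \beta$ gives $\tau \geq \beta$, hence $\kappa = \beta + 1 \leq \tau + 1$; the edge cases $\kappa = 0$ and $\kappa = 1$ follow immediately from Proposition \ref{small} together with the trivial bound $\tau \geq 0$. If $\kappa$ is a non-zero limit ordinal, then for every non-zero $\alpha < \kappa$ one has $\alpha + 1 \leq \kappa$, so $\tau \geq \alpha$, and passing to the supremum forces $\tau \geq \kappa$; combined with Proposition \ref{small} this yields $\tau = \kappa$. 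The case $\kappa = \infty$ is analogous: running $\alpha$ through all non-zero ordinals forces $\tau = \infty = \kappa$. In every case $\kappa \leq \tau + 1$, which together with $\tau \leq \kappa$ gives the claim.

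I do not foresee any real obstacle here; the whole argument is bookkeeping once the three ingredients (Propositions \ref{small} and \ref{okay} together with Conjecture \ref{schr} (i)) are in hand. The only minor subtleties are the edge cases $\kappa \in \{0, 1, \infty\}$, where Proposition \ref{okay} either applies trivially or must be invoked uniformly over all non-zero ordinals, but neither disrupts the overall case split.
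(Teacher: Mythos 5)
Your argument is correct and follows essentially the same route as the paper: chain Conjecture \ref{schr} (i) with Proposition \ref{okay} to get $\TD(A) \geq \alpha$ whenever $\KG(A) \geq \alpha+1$, and combine with $\TD(A) \leq \KG(A)$ from Proposition \ref{small}. The only difference is that you spell out the limit-ordinal and small-value cases explicitly, whereas the paper treats only the successor and $\infty$ cases; this is harmless extra bookkeeping, not a different method.
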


\begin{proof} If $\KG(A) = \infty$, then Conjecture \ref{schr} (i) implies $\radA^{\omega \alpha} \neq 0$ for all ordinal numbers $\alpha$. Hence, $\TD(A) \geq \alpha$ for all $\alpha$ by Proposition \ref{okay}. We conclude that $\TD(A) = \infty$. If $\KG(A) = \alpha+1$ for some ordinal number $\alpha$, then $\radA^{\omega \alpha} \neq 0$ by Conjecture \ref{schr} (i). Hence, $\TD(A) \geq \alpha$. Further, Proposition \ref{small} implies the inequality $\TD(A) \leq \alpha +1$. Thus, either $\TD(A) = \alpha$ or $\TD(A) = \alpha +1$.
\end{proof}

The following is a similar result for the torsion dimension as the result on the Krull-Gabriel dimension, that $\KG(A) \neq 1$ for all Artin algebras $A$. However, we need an additional assumption to conclude that $\TD(A) \neq 1$.

\begin{prop}\label{notone} If there exists $M\in \modA$ such that the smallest torsion class $\mathcal{T}$ containing $M$ is not functorially finite, then $\TD(A) > 1$.
\end{prop}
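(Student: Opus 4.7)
The plan is to show that the hypothesis forces $L/\!\sim$ to have infinite length, where $L$ is the lattice of functorially finite ideal torsion pairs and $\sim$ identifies elements whose interval in $L$ has finite length. This means I need to exhibit an infinite strictly ascending chain $\mathcal{I}_1 \subsetneq \mathcal{I}_2 \subsetneq \cdots$ in $L$ such that each interval $[\mathcal{I}_n,\mathcal{I}_{n+1}]$ has infinite length in $L$.

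First I would note that, since $\mathcal{T}$ is a torsion class not of the form $\gen N$ (by the torsion-class analogue of Proposition \ref{fff}, which is used in its proof), $\mathcal{T}$ must contain infinitely many pairwise non-isomorphic indecomposable modules. Indeed, if there were only finitely many indecomposables $X_1, \dots, X_k$ in $\mathcal{T}$, setting $N := X_1 \oplus \cdots \oplus X_k$ gives $\mathcal{T} = \gen N$, contradicting the hypothesis.

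Next, set $\mathcal{T}_n := (\gen M)^{\diamond n}$, so that $\mathcal{T} = \bigcup_n \mathcal{T}_n$. By Lemma \ref{objext} we have $\langle \mathcal{T}_n \rangle = \langle \gen M \rangle^{\diamond n}$, and by Corollary \ref{ext} each $\langle \mathcal{T}_n \rangle$ is a functorially finite torsion ideal. Using Lemma \ref{iff} applied to a left minimal $\langle \mathcal{T} \rangle$-approximation of $A$, together with the Smal{\o}-type characterization $\mathcal{T} = \gen N$, one checks that $\langle \mathcal{T} \rangle$ is functorially finite if and only if $\mathcal{T}$ is. Since by hypothesis $\mathcal{T}$ is not functorially finite, $\langle \mathcal{T} \rangle = \bigcup_n \langle \mathcal{T}_n \rangle$ is not functorially finite, so the ascending chain $\langle \mathcal{T}_n \rangle$ cannot stabilize and is therefore strictly ascending.

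The main step, and the principal obstacle, is to enrich this strict ascending chain so that each interval has infinite length in $L$. My approach would be to use Theorem \ref{det}(b): left $C$-determined torsion ideals correspond bijectively to bi-submodules of ${}_A C_{\End_A(C)^{\op}}$, and by Theorem \ref{det}(a) the union of these over all $C$ is precisely the collection of functorially finite torsion ideals. For each $n$ I would pick an indecomposable $X_n \in \mathcal{T}_{n+1} \setminus \mathcal{T}_n$ and, choosing $C$ to be a sufficiently large summand-closed bimodule generator containing $X_n$ and $DA^{\op}$, identify functorially finite ideal torsion pairs in the interval $[\langle \mathcal{T}_n \rangle, \langle \mathcal{T}_{n+2}\rangle]$ with certain bi-submodules of $C$. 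The infinite supply of non-isomorphic indecomposables in $\mathcal{T}$, together with the fact that each new indecomposable contributes a genuinely new bi-submodule (as otherwise it would already belong to the summand closure of a previously considered $\gen N$), yields infinitely many distinct functorially finite ideal torsion pairs strictly between $\langle \mathcal{T}_n \rangle$ and $\langle \mathcal{T}_{n+2}\rangle$. This makes each such interval of infinite length in $L$.

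The hard part is this final step: one must carefully rule out that the added lattice elements collapse under $\sim$. My attack would combine Theorem \ref{det}(b) with the structural fact that $\mathcal{T}$ is not covered by any $\gen N$, so that "new" indecomposables can always be produced beyond any finite list, together with an analysis of how extensions of morphisms (as in Proposition \ref{scase}) interact with the bi-submodule lattice to guarantee non-equivalence of the constructed functorially finite ideal torsion pairs in $L/\!\sim$.
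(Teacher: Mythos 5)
Your set-up coincides with the paper's: the chain $\mathcal{I}_n=\langle(\gen M)^{\diamond n}\rangle$ of functorially finite torsion ideals (via Lemma \ref{objext}, Corollary \ref{ext}), the identification with finitely presented subfunctors of $\mathbbm{1}_A$, and the observation that the chain cannot stabilize because otherwise $\langle\mathcal{T}\rangle$ and hence (by Lemma \ref{iff}) $\mathcal{T}$ itself would be functorially finite. The problem is that everything after this point -- which is precisely the heart of the proof -- is not actually carried out. To get $\TD(A)>1$ you must show that each consecutive interval in your chain has infinite length, i.e.\ contains arbitrarily long (in fact infinite) chains of functorially finite ideal torsion pairs, and your proposal only sketches a plan for this, explicitly deferring the decisive step (``my attack would combine \dots''). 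Moreover the route you sketch has two concrete defects. First, Theorem \ref{det}(b) is a correspondence between bi-submodules of ${}_AC_{\operatorname{End}_A(C)^{\op}}$ and the torsion ideals that are left $C$-determined \emph{for that fixed} $C$; there is no argument that the ideal torsion pairs lying in a given interval $[\langle\mathcal{T}_n\rangle,\langle\mathcal{T}_{n+2}\rangle]$ are all left $C$-determined for one common $C$, nor that ``each new indecomposable contributes a genuinely new bi-submodule.'' Second, even if you produced infinitely many distinct functorially finite ideal torsion pairs inside an interval, that would not give infinite length: a modular lattice can have infinitely many elements and finite length (the subspace lattice of a $2$-dimensional space over an infinite field already does), and it is finite length of the interval, not finite cardinality, that governs the collapse under $\sim$. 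So the non-collapse issue you flag at the end is exactly the gap, and it is not closed.

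For comparison, the paper closes this gap in two steps. It first shows that the corresponding subfunctors $t_n$ and $t_{n+1}$ of $\mathbbm{1}_A$ must differ on \emph{infinitely many} isomorphism types of indecomposables: otherwise one deduces $\mathcal{C}_N=\mathcal{C}_{N+1}$ for some $N$ (using Proposition \ref{objfun} and epi-closedness of $\mathcal{C}_{N-n}$), so $\mathcal{C}_N$ is closed under extensions, hence equals $\mathcal{T}$, contradicting that $\mathcal{T}$ is not functorially finite. It then builds an explicit infinite chain $t_{n-1}<\dots<t_{n,2}<t_{n,1}<t_n$ of finitely presented subfunctors of $\mathbbm{1}_A$ inside each interval, by repeatedly choosing a maximal subfunctor of $Ut_n$ containing $Ut_{n-1}$ (whose simple quotient is supported at a single indecomposable, by Auslander) and correcting it so that its values are submodules; the infinitude of indecomposables where $t_{n-1}$ and $t_n$ disagree guarantees this process never terminates. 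That explicit chain is what yields m-dimension at least $1$ for each interval and hence $\TD(A)\geq 2$; your proposal would need an argument of comparable strength in place of the bi-submodule heuristic.
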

\begin{proof} Let $\mathcal{C} = \gen M$. By Proposition \ref{fff} it follows that $\mathcal{C}$ is a functorially finite epi-closed class. Hence, by Lemma \ref{prooop} and Lemma \ref{iff}, the ideal $\langle \mathcal{C} \rangle$ is a functorially finite torsion ideal. Consider the ideal
\begin{align*}
    \mathcal{I}_n = \langle \mathcal{C}\rangle\underbrace{\diamond \dots \diamond}_{n\text{\rm-times}} \langle \mathcal{C} \rangle
\end{align*}
for $n\in \mathbbm{N}$. Then $\mathcal{I}_n$ is a functorially finite torsion ideal by Corollary \ref{ext}. Further, $\mathcal{I}_n = \langle \mathcal{C}_n \rangle$ for
\begin{align*}
    \mathcal{C}_n = \mathcal{C} \underbrace{\diamond \dots \diamond}_{n\text{\rm-times}} \mathcal{C}
\end{align*}
by Lemma \ref{objext}. The functorially finite torsion ideal $\mathcal{I}_n$ corresponds to a finitely presented subfunctor $t_n$ of $\mathbbm{1}_A$ by Corollary \ref{corres}. Since $\mathcal{I}_{i} \subseteq \mathcal{I}_{i+1}$, this yields a chain $t_1 \leq t_2 \leq \dots$ of finitely presented subfunctors of $\mathbbm{1}_n$. We show that $t_n X \neq t_{n+1} X$ for infinitely many isomorphism types of indecomposable modules $X\in \modA$. 
Suppose the contrary holds. Then there exists $N>n$ such that $t_n X = t_{n+1} X$ for all indecomposable modules $X$ in $\mathcal{C}_{N+1}\backslash \mathcal{C}_N$. It follows that $t_{n+1} X = t_n X \in \mathcal{C}_n$ by Proposition \ref{objfun}. Further, since $X\in \mathcal{C}_{N+1}$, there exists a submodule $X'\in \mathcal{C}_{n+1}$ of $X$ such that $X/X' \in \mathcal{C}_{N-n}$. The canonical inclusion $X' \rightarrow X$ factors through the left $\langle \mathcal{C}_{n+1} \rangle$-approximation $t_{n+1} X \rightarrow X$, so $X' \subseteq t_{n+1} X$. Because $C_{N-n}$ it epi-closed, it follows that $X/t_{n+1} X \in \mathcal{C}_{N-n}$. Now $t_{n+1} X \in \mathcal{C}_n$ and $X/t_{n+1} X \in \mathcal{C}_{N-n}$ imply $X\in \mathcal{C}_{N}$. Hence, $\mathcal{C}_N = \mathcal{C}_{N+1}$. Thus,   $\mathcal{C}_N$ is closed under extensions. By Remark \ref{remrem} it follows that $\mathcal{T} = \mathcal{C}_N$, so $\mathcal{T}$ is functorially finite, which is a contradiction. Hence, the functors $t_n$ and $t_{n+1}$ always disagree on infinitely many isomorphism types of indecomposable modules. Next, we refine the chain of finitely presented functors
\begin{align*}
    t_1 < t_2 < \dots < \mathbbm{1}_A.
\end{align*}
Let $U\colon \modA \rightarrow \Ab$ be the forgetful functor. Then $U t_n$ is finitely presented, so there exists a maximal subfunctor $r_{n,1}$ of $U t_n$ containing $U t_{n-1}$. Now the quotient $U t_n/r_{n,1}$ is simple. Thus, the functors $U t_n$ and $r_{n,1}$ agree on all but one isomorphism type of an indecomposable module $Y$ \cite{Auslander3}. Notice that $r_{n,1} Y$ may not be a submodule of $Y$. To fix this, we define $t_{n,1}$ by $t_{n,1} X = r_{n,1} X = t_n X$ for $X\not\cong Y$ indecomposable and 
\begin{align*}
    t_{n,1} Y = t_{n-1} Y + \sum \varphi(t_{n,1} X),
\end{align*}
where we sum over all morphisms $\varphi\colon X\rightarrow Y$ with $X\not\cong Y$ indecomposable. By construction $t_{n-1} \subseteq t_{n,1} \subseteq r_{n,1}$ and $t_{n,1}$ is functorial. Further, $t_{n,1} X$ is a submodule of $X$ for all indecomposable modules $X$ and $t_{n,1}, t_n$ agree on all but one isomorphism type of an indecomposable module. It follows that $U t_n/U t_{n,1}$ is finitely presented and thus $U t_{n,1}$ is finitely presented. Further, the inequalities $t_{n-1} < t_{n,1} < t_n$ hold. Since $t_{n-1}$ and $t_n$ disagree on infinitely many different isomorphism types of indecomposable modules, we can proceed inductively to obtain a chain of finitely presented functors  
\begin{align*}
    t_{n-1} < \dots < t_{n,2} < t_{n,1} < t_n.
\end{align*}
It follows that the m-dimension of the interval $[t_{n-1},t_n]$ equals at least $1$ for all $n$. Thus, the m-dimension of the interval $[t_1, \mathbbm{1}_A]$ equals at least $2$. We conclude that  $\TD(A) > 1$.
\end{proof}

We are now ready to show that the torsion dimension coincides with the Krull-Gabriel dimension of a hereditary Artin algebra. For the details of the representation theory of hereditary Artin algebras, see \cite{Dlab2} and \cite{Ringel2}.

\begin{thm}\label{here} Let $A$ be a hereditary Artin algebra. Then $\TD(A) = \KG(A)$.
\end{thm}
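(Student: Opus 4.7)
The plan is to split the proof by representation type for hereditary Artin algebras (representation finite, tame, wild) and combine Geigle's computation of $\KG(A)$, stated earlier in the excerpt, with an independent computation of $\TD(A)$ in each case. In the representation finite case, Auslander's criterion gives $\KG(A) = 0$, meaning every object of $\funfp{A}$ has finite length. Since $\mathbbm{1}_A \cong \Hom(A,-)$ lies in $\funfp{A}$, its lattice of subfunctors is finite, so the sublattice of finitely presented subfunctors of $\mathbbm{1}_A$ is finite as well. By Corollary~\ref{corres} this parametrizes the functorially finite ideal torsion pairs, hence $\TD(A) = 0 = \KG(A)$.

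In the tame case, Geigle's theorem gives $\KG(A) = 2$, and Proposition~\ref{small} yields the upper bound $\TD(A) \leq 2$. For the reverse inequality I would apply Proposition~\ref{notone}: it suffices to exhibit a single module $M \in \modA$ whose smallest containing torsion class fails to be functorially finite. Such $M$ exists for every tame hereditary $A$ by classical tame hereditary representation theory; a concrete choice is a quasi-simple regular module $M$, whose generated torsion class is the additive closure of the tube through $M$, a subcategory known not to be functorially finite. Proposition~\ref{notone} then gives $\TD(A) > 1$, and since $\TD(A)$ is an ordinal, $\TD(A) \geq 2$, hence $\TD(A) = 2 = \KG(A)$.

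In the wild case, Geigle gives $\KG(A) = \infty$, and Proposition~\ref{small} provides no information. I would instead invoke Proposition~\ref{okay}, which reduces the required equality $\TD(A) = \infty$ to showing $\radA^{\omega\alpha} \neq 0$ for every ordinal $\alpha$. Nonvanishing of all transfinite radical powers for wild hereditary $A$ should be extracted from the representation theory of wild hereditary algebras: via Kerner-type embedding results, module categories of essentially arbitrary finite dimensional algebras can be realized as full exact subcategories of $\modA$ in a way compatible with the radical ideal, and iterating these embeddings through the transfinite hierarchy (in the spirit of the inductive construction used in the proof of Proposition~\ref{okay}) produces the required nonvanishing factorizations at every ordinal level. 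Applying Proposition~\ref{okay} for each $\alpha$ then forces $\TD(A) \geq \alpha$ for all ordinals, so $\TD(A) = \infty = \KG(A)$.

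The main obstacle is the wild case. The inequality $\TD(A) \leq \KG(A)$ yields nothing when $\KG(A) = \infty$, and the mere infinitude of the Krull--Gabriel dimension does not, by the results developed so far, formally force $\TD(A) = \infty$. One genuinely needs transfinite nonvanishing of $\radA^{\omega\alpha}$ at every ordinal level, which is where wildness (as opposed to merely non-tameness) must enter essentially, and this is the step where I expect the delicate representation-theoretic input to be required.
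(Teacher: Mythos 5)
Your overall strategy is the same as the paper's (case split by representation type; Geigle for $\KG$; Proposition \ref{small} resp.\ a finite-length lattice argument in the representation finite case; Proposition \ref{notone} in the tame case; Proposition \ref{okay} plus nonvanishing of transfinite radical powers in the wild case), but two of the three cases have genuine problems. In the tame case your concrete witness does not work in general: a quasi-simple regular module $M$ in a tube of rank $r\geq 2$ satisfies $\Hom(M,\tau M)=0$, i.e.\ it is $\tau$-rigid, and then the smallest torsion class containing $M$ is $\gen M$, which \emph{is} functorially finite; moreover the smallest torsion class containing a regular module is never the additive closure of its tube, since torsion classes are closed under factor modules and factors of regular modules include preinjective ones. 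The existence statement you need is nevertheless true, and this is exactly what the paper imports: take a quasi-simple in a homogeneous (rank one) tube, or, as in the paper, the direct sum of all simple regular modules of one tube of finite rank, for which the failure of functorial finiteness of the generated torsion class is Ringel's result (cited there as Proposition 4 of the paper's reference on lattices of torsion classes). So the tame case is fixable, but as written the justification is incorrect.

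The wild case is a real gap. The inequality you need, $\radA^{\omega\alpha}\neq 0$ for every ordinal $\alpha$, is not proved by the sketch you give: fully faithful exact embeddings of module categories in the style of Kerner do not by themselves control transfinite powers of $\radA$, and ``iterating embeddings through the transfinite hierarchy'' does not produce, for each $\alpha$, a nonzero morphism in $\radA^{\omega\alpha}$ -- nothing in your outline exhibits the required factorizations beyond finite stages. The paper closes this step with a single known input: for wild (hereditary) $A$ one has $\radA^{\infty}\neq 0$ by Krause's theorem (Proposition 8.15 of the cited habilitation), and since $\radA^{\infty}\subseteq\radA^{\omega\alpha}$ for every $\alpha$, Proposition \ref{okay} yields $\TD(A)\geq\alpha$ for all $\alpha$, hence $\TD(A)=\infty$. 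You correctly flagged this as the delicate point, but without that input (or an actual construction replacing it) the wild case, and hence the theorem, is not established. A minor additional remark: in the representation finite case the lattice of finitely presented subfunctors of $\mathbbm{1}_A$ need not be finite, only of finite length, which is what the m-dimension argument requires; the paper's shorter route is simply $\TD(A)\leq\KG(A)=0$ via Proposition \ref{small}.
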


\begin{proof}  We have $\TD(A) \leq \KG(A) $ by Proposition \ref{small}. Thus, if $A$ is of finite representation type, then $\TD(A) = \KG(A) = 0$.

If $A$ is of tame representation type, then $\TD(A) \leq \KG(A) = 2$. Let $M$ be the direct sum of all simple regular modules (up to isomorphism) of a tube of finite rank. Then the smallest torsion class containing $M$ is not functorially finite by \cite[Proposition 4]{Ringel3}. Hence, $\TD(A) > 1$ by Proposition \ref{notone} and thus $\TD(A) = 2.$

If $A$ is of wild representation type, then $\radA^\infty \neq 0$ by \cite[Proposition 8.15]{Krause}. Hence, $\TD(A) = \infty$ by Proposition \ref{okay} and thus $\TD(A) = \infty  = \KG(A)$.
\end{proof}

In the following example, we show an application of the monoidal structure on pairs $s\leq t$ of subfunctor of $\mathbbm{1}_A$ (see Section 2) to the torsion dimension and the Krull Gabriel dimension.

\begin{exa}\label{end}\rm  Let $k$ be an algebraically closed field, $Q = \begin{tikzcd}
    1 \arrow[r, shift left] \arrow[r, shift right] & 2
\end{tikzcd}$ the Kronecker quiver and $A = kQ$ (as in Example \ref{kro}). Then $A$ is a tame hereditary Artin algebra. Hence, $\TD(A) = \KG(A) = 2$ by Theorem \ref{here}. In this example, we show that two finitely presented subfunctors of $\mathbbm{1}_A$ are sufficient to produce enough finitely presented subfunctors of $\mathbbm{1}_A$ by the monoidal structure to deduce $\TD(A) \geq 2$.  

Let $S \in \modA$ be simple injective and $T\in \modA$ simple projective. We can divide the indecomposable modules of $\modA$ into three parts: The preprojective modules $\mathcal{P}$, the regular modules  $\mathcal{R}$ and the preinjective modules $\mathcal{I}$. Further, the regular modules $\mathcal{R}$ can be divided into tubes $\mathcal{R}^\lambda = \{R^\lambda_1, R^\lambda_2, \dots\}$ for $\lambda\in k\cup \{\infty \}$. For all $\lambda$ the full subcategory of modules isomorphic to a direct sum of modules in $\mathcal{R}^\lambda$ is abelian and has one simple object $R_1^\lambda$. Further, it is uniserial, so the lattice of subobjects of $R_j^\lambda$ is linearly ordered for all $j$. The linear order is determined by the short exact sequences 
\begin{align*}
    0 \longrightarrow R^\lambda_i \longrightarrow R^\lambda_j \longrightarrow R^\lambda_{j-i} \longrightarrow 0.
\end{align*}
Consider $\mathcal{C} = \gen R_1^\lambda$ and $\mathcal{D} = \cogen R_1^\lambda$. The indecomposable modules in $\mathcal{C}$ equal $\{R_1^\lambda, S\}$ and the ones in $\mathcal{D}$ equal $\{R_1^\lambda, T\}$ (up to isomorphism). Further, $\mathcal{C}$ is a functorially finite epi-closed class and $\mathcal{D}$ a functorially finite mono-closed class by Proposition \ref{fff}. Let $\mathcal{I} = \langle \mathcal{C} \rangle$ and $\mathcal{J} = \langle \mathcal{C} \rangle$. Then, by Lemma \ref{prooop} and Lemma \ref{iff}, the ideal $\mathcal{I}$ is a functorially finite torsion ideal and $\mathcal{J}$ a functorially finite torsion-free ideal. Now $\mathcal{I}$ corresponds to a finitely presented subfunctor $t$ of $\mathbbm{1}_A$ and $\mathcal{J}$ to a finitely presented subfunctor $r$ of $\mathbbm{1}_A$ by Corollary \ref{corres}. For $M\in \modA$ the module $t M$ is the largest submodule of $M$ in $\mathcal{C}$ and $M/r M$ is the largest factor module of $M$ in $\mathcal{D}$ by Proposition \ref{objfun}. By the above short exact sequences, we conclude that
\begin{align*}
    t R_j^\lambda = R_1^\lambda, \qquad r R_j^\lambda = R_{j-1}^\lambda.
\end{align*}
Next, we employ the monoidal structure on pairs of subfunctors of $\mathbbm{1}_A$. Let $t_i$ be defined by $(\mathbbm{1}_A/t)^i = \mathbbm{1}_A/t_i$. Then $t_1 \leq t_2 \leq \dots$ are finitely presented by Lemma \ref{rest}. By the values of $t$, it follows that $t_i R_k^\lambda = R_k^\lambda$ if $k < i$ and otherwise $t_i R_k^\lambda = R_i^\lambda$. Further, let $r_{i,j}$ be defined by $r^j (t_{i+j+1}/t_i) = r_{i,j}/t_i$. Then $r_{i,j}$ is finitely presented by Lemma \ref{rest}. By the values of $t_i$ and $r$, it follows that 
\begin{alignat*}{2}
    k < i  \colon \quad &r_{i,j} R_k^\lambda = R_k^\lambda &&= t_i R_k^\lambda,\\
    i \leq k \leq i+j \colon \quad &r_{i,j} R_k^\lambda = R_{j}^\lambda &&= t_i R_k^\lambda, \\
    i+j < k \colon \quad &r_{i,j} R_k^\lambda = R_{i+1}^\lambda &&= t_{i+1} R_k^\lambda.
\end{alignat*}
Hence, if we restrict $r_{i,j}$ on $\mathcal{R}^\lambda$, then
\begin{align*}
    r_{i,0} \mid_{\mathcal{R}^\lambda}\, \gneq r_{i,1} \mid_{\mathcal{R}^\lambda}\, \gneq r_{i,2} \mid_{\mathcal{R}^\lambda}\, \gneq r_{i,3} \mid_{\mathcal{R}^\lambda}\, \gneq \dots 
\end{align*}
and thus
\begin{align*}
    t_{i+1} = r_{i,0} \gneq r_{i,0} \cap r_{i,1} \gneq r_{i,0} \cap r_{i,1} \cap r_{i,2} \gneq \dots \gneq t_i.
\end{align*}
It follows that the m-dimension of the interval $[t_i, t_{i+1}]$ equals $1$ for all $i$. We conclude that the m-dimension of $[t, \mathbbm{1}_A]$ equals $2$ and so $\TD (A) \geq 2$.
\end{exa}

\end{document}